\newcommand{\besov}{B^{-d/2}_{\infty,\infty}}
\newcommand{\Besov}{B^{-d/2}_{\infty,\infty}(\mathbb{T}^d)}
\newcommand{\BV}{BV(\mathbb{T}^d)}
\newcommand{\bv}{BV}
\newcommand{\Td}{\mathbb{T}^d}
\newcommand{\Rd}{\mathbb{R}^d}
\newcommand{\Zd}{\mathbb{Z}^d}
\theoremstyle{definition}
\newtheorem{definition}{Definition}
\newtheorem{theorem}{Theorem}
\newtheorem{proposition}{Proposition}
\newtheorem{lemma}{Lemma}
\newtheorem{remark}{Remark}
\newtheorem{assumption}{Assumption}
\newtheorem{example}{Example}
\newtheorem*{examples}{Examples}
\newtheorem*{otherex}{Other examples}
\newtheorem*{Maintheorem}{Main Theorem}
\numberwithin{equation}{section}
\title{Frame-constrained Total Variation Regularization for White Noise Regression}
\author[1]{Miguel~del~\'Alamo}
\author[1]{Housen~Li}
\author[1,2]{Axel~Munk}
\affil[1]{Institute for Mathematical Stochastics, University of G\"ottingen \\ Goldschmidtstrasse 7, 37077 G\"ottingen, Germany}
\affil[2]{Max Planck Institute for Biophysical Chemistry, Am Fassberg 11, 37077 G\"ottingen, Germany}
\date{\today}
\begin{document}

\maketitle

\begin{abstract}
Despite the popularity and practical success of total variation (TV) regularization for function estimation, surprisingly little is known about its theoretical performance in a statistical setting. While TV regularization has been known for quite some time to be minimax optimal for denoising one-dimensional signals, for higher dimensions this remains elusive until today. In this paper we consider frame-constrained TV estimators including many well-known (overcomplete) frames in a white noise regression model, and prove their minimax optimality w.r.t.~$L^q$-risk ($1\leq q<\infty$) up to a logarithmic factor in any dimension $d\geq 1$. Overcomplete frames are an established tool in mathematical imaging and signal recovery, and their combination with TV regularization has been shown to give excellent results in practice, which our theory now confirms. Our results rely on a novel connection between frame-constraints and certain Besov norms, and on an interpolation inequality to relate them to the risk functional. 
\end{abstract}

\begin{center}
\textbf{Keywords } Nonparametric regression $\cdot$ Minimax estimation $\cdot$ Total variation $\cdot$ Interpolation inequalities $\cdot$ Wavelets $\cdot$ Overcomplete dictionaries
\end{center}

\begin{center}
\textbf{Mathematics Subject Classification (2010) } 62G05 \  62M40 \ 62G20
\end{center}

\tableofcontents

\section{Introduction}

We consider the problem of estimating a real-valued function $f$ from observations in the commonly used Gaussian white noise regression model (see e.g.~\cite{brown1996asymptotic},~\cite{reiss2008} and~\cite{tsybakov2009introduction})
\begin{equation}
 dY(x)=f(x)\, dx+\frac{\sigma}{\sqrt{n}}\,dW(x),\ \ x\in[0,1)^d.\label{Intro:WNmodel}
\end{equation}
Here, $dW$ denotes the standard Gaussian white noise process in $L^2(\Td)$, and we identify the $d$-torus $\Td\sim \Rd/\mathbb{Z}^d$ with the set 
$[0,1)^d$, i.e.~to simplify technicalities we assume $f$ to be a $1$-periodic function (see Remark~\ref{Res:RemPeriodic} in Section~\ref{Sect:Res} for the arguments to treat the nonperiodic case). To ease notation we will henceforth drop the symbol $\Td$, and write for instance $L^2$ instead of $L^2(\Td)$, and so on. The function $f$ is assumed to be of bounded variation ($\bv$), written $f\in\bv$, meaning that $f\in L^1$ and its weak partial 
derivatives of first order are finite Radon measures on $\Td$ (see Section~\ref{Res:Basic_Def} or~Chapter 5 in~\cite{Evans}). Note that, for~\eqref{Intro:WNmodel} to be well-defined, we need 
to assume additionally that $f\in L^2$ if $d\geq 3$, since only in $d=1,2$ we have $f\in \bv\subset L^2$. 
In the following we assume that $\sigma$ is known, otherwise it 
can be estimated $\sqrt{n}$-efficiently (see e.g.~\cite{munk2005difference} or~\cite{spokoiny2002variance}), which will not affect our results. 
In the following we use the terms bounded variation (BV) and total variation (TV) indistinctly. The former is commonly used in analysis, while the latter appears in imaging.

Functions of bounded variation can have discontinuities, and are thus ideal to model objects with edges and abrupt changes.
This is a desirable property for instance in medical imaging applications, where sharp transitions 
between tissues occur, and smoother functions would represent them inadequately (see e.g.~\cite{LiMRI} for a TV-based optical flow method in real time magnetic resonance imaging or~\cite{JiangPA} for its use in photoacoustic tomography). 
Consequently, $BV$ functions have been studied extensively in the applied and computational analysis literature, see e.g.~\cite{chambolle1997image},~\cite{Meyer},~\cite{ROF},~\cite{scherzer2009variational} and references therein.
Remarkably, the very reason for the success of functions of bounded variation in applications, namely their low smoothness, 
has hindered the development of a rigorous theory for the corresponding estimators in a statistical setting. 
With the exception of the one-dimensional case $d=1$, where total variation (TV) penalized least squares~\citep{mammen1997} and wavelet thresholding~\citep{donoho1998minimax} applied to $BV$ functions are known to attain the minimax optimal convergence rate $O(n^{-1/3})$, there are to the best of our knowledge no statistical guarantees for estimating $BV$ functions in dimension $d\geq 2$. 
Roughly speaking, the main challenges in higher dimensions are twofold: first, the embedding $BV\hookrightarrow L^{\infty}$ \textit{fails} if $d\geq 2$; and second, the space $BV$ does not admit a characterization in terms of the size of wavelet coefficients. More generally, $BV$ does not admit an unconditional basis (see Sections 17 and 18 in~\cite{Meyer}).

Our goal in this paper is to fill that gap. We consider the continuous model~\eqref{Intro:WNmodel} and present estimators for $f\in BV$ that are minimax optimal up to logarithmic factors in any dimension, i.e.~they attain the polynomial rate $n^{-1/(d+2)}$ for the $L^q$-risk, $q\in[1,1+2/d]$, and the rate $n^{-1/dq}$ for $q\in[1+2/d,\infty)$. 
While the first regime is well-known (e.g.~for $d=1$ and $q=2$, see again~\cite{mammen1997} and~\cite{donoho1998minimax}), much less attention has been paid to the second regime. We mention~\cite{goldenschluger} and~\cite{lepskii2015} for estimation over anisotropic Nikolskii classes, which in the isotropic case coincide with Besov spaces $B^s_{p,\infty}$, and~\cite{sadhanala2016total} for the case of discrete total variation when $q=2$ (see "Related work" later in this section for a comprehensive discussion). These risk regimes explain the recently observed phase transitions in discrete TV-regularization~\citep{sadhanala2016total} and component-wise isotone estimation~\cite{han2017isotonic} (see Figure~\ref{fig:Exponent} and the remarks after the Main Theorem in the Introduction for more details). As a remarkable statistical consequence we also show that there is no $L^{\infty}$-consistent estimator of $BV$ functions.

\begin{figure}
\hspace{3cm}
\includegraphics[width=.65\linewidth,trim={{0.3\linewidth} {1.2\linewidth} {0.4\linewidth} {0.2\linewidth}},clip]{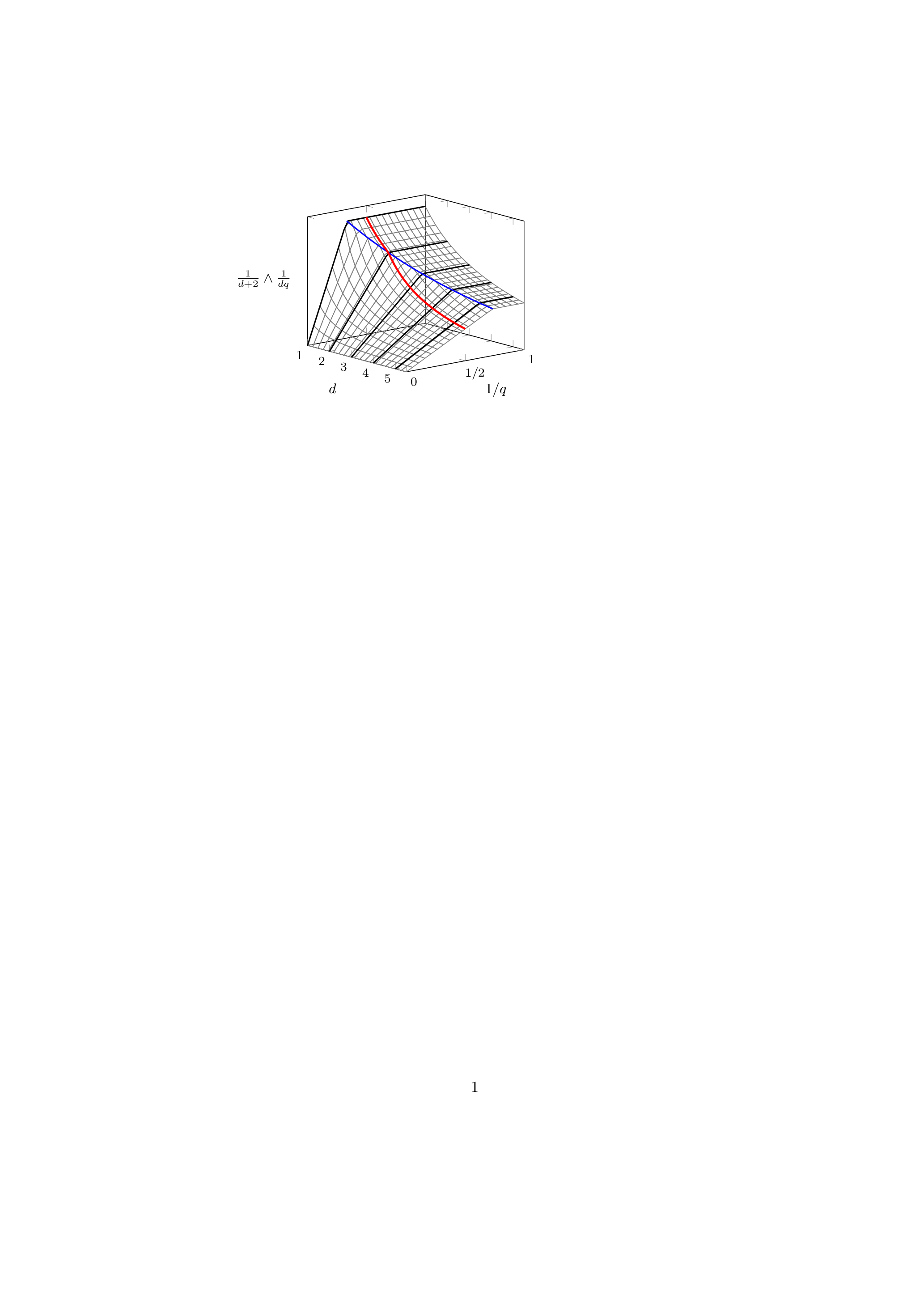}
\vspace{-1cm}
\caption{Exponent of the minimax rate over $BV_L$, $\min\{\frac{1}{d+2},\frac{1}{dq}\}$, plotted as a function of $d\in\mathbb{N}$ and $1/q\in[0,1]$. The line $1/q=d/(d+2)$ is marked in blue, and the red line corresponds to the $L^2$-risk, $q=2$. The phase transition observed in~\cite{sadhanala2016total} for the $L^2$-minimax risk corresponds to the change of behavior of the red curve.}
\label{fig:Exponent}
\end{figure}

The estimators that achieve these rates are not a straightforward extension of those for $d=1$~\citep{mammen1997}. There it is sufficient to penalize a \textit{global} least-squares data-fidelity term by the TV functional, i.e.,
\begin{equation}
\hat{f}_{\lambda_n}\in\underset{g}{\textup{argmin}}\, \|g-Y\|_2^2+\lambda_n |g|_{BV}\label{Intro:ROF}
\end{equation}
for a suitable sequence of Lagrange multipliers $\lambda_n$, where $|g|_{BV}$ denotes the $BV$-seminorm of $g$ (Section~\ref{Res:Basic_Def}). 
Instead, we consider estimators that combine the strengths of TV and \textit{multiscale} data-fidelity constraints. Multiscale data-fidelity terms and the associated reconstructions by the corresponding dictionary are widely used since the introduction of wavelets (see e.g.~\cite{daubechies1992ten} and~\cite{donoho1993unconditional}), and specially for imaging tasks overcomplete frames such as curvelets~\citep{candes2000curvelets}, shearlets (\cite{guo2006sparse},~\cite{labate2005sparse}) and other multiresolution systems (see~\cite{haltmeier2014extreme} for a survey) have been shown to perform well in theory and numerical applications. 
In contrast, for the multiscale TV-estimators a theoretical understanding in a statistical setup when $d\geq 2$ is lacking, although its good empirical performance has been reported for specific choices of dictionaries in several places~\cite{CandesGuo}~\cite{dong2011automated}~\cite{frick2012}~\cite{frick2013statistical}, see also Figure~\ref{fig:Intro1}.
Further, these methods were rarely used in routine applications, as they need large scale nonsmooth convex optimization methods for their computation. However, in the meantime such methods have become computationally feasible due to recent progress in optimization, e.g.~the development of primal-dual algorithms~\citep{chambolle} or semismooth Newton methods~\citep{clason2010semismooth}. 
Hence, we do see practical potential for such multiscale TV-methods, for which we give a theoretical justification in this paper in large generality.

\subsection*{Multiscale total variation estimators}

Let $\Phi=\big\{\phi_{\omega}\, \big|\, \omega\in\Omega\big\}\subset L^2$ be a dictionary of functions indexed by a countable set 
$\Omega$ and satisfying $\|\phi_{\omega}\|_{L^2}=1$, $\omega\in\Omega$. Consider the projection of the white noise model~\eqref{Intro:WNmodel} onto $\Phi$,
\begin{equation}
 Y_{\omega}:=\langle \phi_{\omega},f\rangle+\frac{\sigma}{\sqrt{n}}\int_{\Td}\phi_{\omega}(x)\, dW(x), \ \ \omega\in\Omega,\label{Intro:Ycoeff}
\end{equation}
where $\langle\cdot,\cdot\rangle$ denotes the standard inner product in $L^2$. For each $n\in\mathbb{N}$, $\Phi$ and given the 
observations $Y_{\omega}$, our estimator $\hat{f}_{\Phi}$ for $f$ is defined as any solution to the constrained minimization problem
\begin{equation}
 \hat{f}_{\Phi}\in\underset{g\in X_n}{\textup{ argmin }}|g|_{\bv}\ \textup{ subject to } \ \max_{\omega\in\Omega_n}\big|\langle \phi_{\omega},g\rangle-Y_{\omega}\big|\leq\gamma_n.\label{Intro:TV_est1}
\end{equation}
Here, $X_n\subset BV$ is a suitable closed, convex set which may depend on $n$ (see~\eqref{Res:Aux_Set} for the definition). 
Hence, the existence of a minimizer is guaranteed by the convexity and lower-semicontinuity of the objective function and the constraint. 
The \textit{finite} subsets $\Omega_n\subset\Omega$ indexing a proper sequence of subsets of the dictionary $\Phi$ will be specified later (see Assumption~\ref{Res:Gen_Ass} and~\eqref{Intro:BesovCompat} below). For instance, if $\Phi$ is a wavelet basis, $\Omega_n$ corresponds to the wavelet coefficients at all scales $j$ such that $2^{jd}\leq n$. 

The constraint in~\eqref{Intro:TV_est1} can be interpreted statistically as testing whether the data $Y_{\omega}$ is compatible with the coefficients $\langle \phi_{\omega},\hat{f}_{\Phi}\rangle$, \textit{simultaneously} for all $\omega\in\Omega_n$, an approach that dates back to~\cite{nemirovskii1985nonparametric}. This testing interpretation suggests how to choose the parameter $\gamma_n$ in~\eqref{Intro:TV_est1}: the coefficients $\langle \phi_{\omega},f\rangle$ of the truth should satisfy the constraint with high probability. This can be achieved by the \textit{universal threshold} 
\begin{equation}
\gamma_n(\kappa)=\kappa\sigma\sqrt{\frac{2\log \#\Omega_n}{n}} \ \ \textup{ for } \ \ \kappa>\kappa*\label{Res:UnivGamma}
\end{equation}
with $\kappa^*>0$ depending on $d$ and the dictionary $\Phi$ in an explicit way (see Theorem~\ref{Res:Main_thm}). This universal choice of the parameter $\gamma_n$ appears to us as a great conceptual and practical advantage of the estimator~\eqref{Intro:TV_est1}, in contrast to its penalized formulation, requiring more complex parameter-choice methods (e.g.~\cite{lepskii1991problem} or~\cite{wahba1977practical}). In particular, $\gamma_n$ in~\eqref{Res:UnivGamma} can be precomputed using known or simulated quantities only.

The main conceptual contribution of this paper is to link the multiscale constraint in~\eqref{Intro:TV_est1} and the Besov $\besov$ norm. In fact, several dictionaries $\Phi$ used in practice have the following property: for each $n\in\mathbb{N}$ there is a finite subset $\Omega_n\subset \Omega$ such that
\begin{equation}
\|g\|_{\besov}\leq C\max_{\omega\in\Omega_n}\big|\langle \phi_{\omega},g\rangle\big|+C\frac{\|g\|_{L^{\infty}}}{\sqrt{n}}\label{Intro:BesovCompat}
\end{equation}
holds for any function $g\in L^{\infty}$. This is a Jackson-type inequality~\citep{cohen2003numerical}, representing how well a function can be approximated in the Besov $\besov$ norm by its coefficients with respect to $\Phi$. It is well-known that smooth enough wavelet bases satisfy this condition~\citep{cohen2003numerical}. In Section~\ref{Sect:Examps} we will show that~\eqref{Intro:BesovCompat} holds for more general multiscale systems, e.g.~systems of indicator functions of dyadic cubes, and mixed frames of wavelets and curvelets and of wavelets and shearlets. In practice, the inequality~\eqref{Intro:BesovCompat} allows us to relate the statistical multiscale constraint in~\eqref{Intro:TV_est1} to an analytic object (the Besov norm). With this connexion, we leverage tools from harmonic analysis to analyze the performance of the estimator~\eqref{Intro:TV_est1}.

 For fixed $L>0$, define the $BV\cap L^{\infty}$-ball of radius $L$,
\begin{align}
 BV_L&:=\big\{g\in BV\cap L^{\infty} \, \big|\, \|g\|_{L^{\infty}}\leq L, \ |g|_{\bv}\leq L\big\}.\label{Intro:Param1}
\end{align}

The main contribution of this paper (Theorems~\ref{Res:Main_thm} and~\ref{MinimaxThm} in Section~\ref{Sec:Main}) can be informally stated as follows.
\begin{Maintheorem}[Informal]
Let the dimension $d\in\mathbb{N}$, and let $\Phi$ satisfy an inequality of the form~\eqref{Intro:BesovCompat} (see Assumption~\ref{Res:Gen_Ass} in Section~\ref{Sec:Main}). Let the threshold $\gamma_n$ in~\eqref{Intro:TV_est1} be as in~\eqref{Res:UnivGamma}. Then the estimator $\hat{f}_{\Phi}$ in~\eqref{Intro:TV_est1} attains the \textit{minimax optimal} rate of convergence over $BV_L$ possibly up to a logarithmic factor ($(\log n)^2$ in $d=1$ and $\log n$ else)
\begin{equation}
     \sup_{f\in BV_L}\mathbb{E}\big[\|\hat{f}_{\Phi}-f\|_{L^q}\big]\leq C_L\, n^{-\min\{\frac{1}{d+2},\frac{1}{dq}\}}\label{Intro:Conver1}
\end{equation}
for $n$ large enough, for any $q\in\big[1,\infty\big)$, any $L>0$ and a constant $C_L>0$ independent of $n$ and $q$, but dependent on $L$, $\sigma$, $d$ and $\Phi$. .
\end{Maintheorem}

We remark that this reproduces the results by~\cite{sadhanala2016total} for estimating $BV$ functions in a discrete model for $q=2$ (quadratic risk). Indeed,~\cite{sadhanala2016total} shows that the minimax rate with respect to the empirical $\ell^2$-risk scales as 
$n^{-\min\{\frac{1}{d+2},\frac{1}{2d}\}}$. Our theorem explains this "phase transition" in the risk between $d\leq 2$ and $d>2$ as arising from the low smoothness of $BV$ functions and from the $L^q$-risk employed (see Figure~\ref{fig:Exponent} for an illustration of this).

Notably, the minimax rate in the Main Theorem for $q=2$ also matches the minimax rate derived in~\cite{han2017isotonic} for estimating bounded, component-wise isotone functions in a discretized setting with respect to the empirical $\ell^2$-risk. Remarkably, this means that the statistical complexity of estimating $BV$ functions equals the complexity of estimating component-wise isotone functions, arguably a much simpler class. This result is well-known in dimension $d=1$, as a function of bounded variation can be written as the difference of two monotone functions, but we are not aware of any such result in $d\geq 2$. 
Moreover, this complements the recent finding that entirely monotone functions have the same statistical complexity as functions of bounded variation in the sense of Hardy-Krause~\cite{fang2019}. We remark, however, that bounded variation in the sense of Hardy-Krause is a much stronger assumption than bounded variation in the sense that we use here (see "Related work" for a discussion).

The proof of~\eqref{Intro:Conver1} relies on the compatibility between the frame constraint and the $\besov$ norm, as expressed in~\eqref{Intro:BesovCompat}. This allows us to use techniques from harmonic analysis to analyze $\hat{f}_{\Phi}$, such as the interpolation inequality between $\besov$ and $\bv$~\citep{cohen2003harmonic},
\begin{equation}
   \|g\|_{L^q}\leq C \|g\|_{B^{-d/2}_{\infty,\infty}}^{\frac{2}{d+2}}\|g\|_{BV}^{\frac{d}{d+2}}\hspace{0.5cm} \forall g\in\besov\cap BV\label{Intro:DummyInt}
\end{equation}
for any $q\in\big[1,\frac{d+2}{d}\big]$, $d\geq 2$. 
This interpolation inequality relates the risk functional on the left-hand side with the data-fidelity and the regularization functionals on the right-hand side. 
It can be proven by a delicate analysis of the wavelet coefficients of functions of bounded variation (the original proof is in~\cite{cohen2003harmonic}, and here we use an extension of~\eqref{Intro:DummyInt} to periodic functions). The inequality~\eqref{Intro:DummyInt} is the first step towards bounding the $L^q$-risk of 
$\hat{f}_{\Phi}$: inserting $g=\hat{f}_{\Phi}-f$ we can bound it in terms of the $\besov$ and the $BV$-risks. 
It can be shown that the $BV$-risk is bounded by a constant with high probability, while the $\besov$-risk can be handled using inequality~\eqref{Intro:BesovCompat} as 
follows:
\begin{align}
 \|\hat{f}_{\Phi}-f\|_{\besov}&\leq  C\max_{\omega\in\Omega_n}\big|\langle\phi_{\omega},\hat{f}_{\Phi}\rangle-Y_{\omega}\big|+C\frac{\sigma}{\sqrt{n}}\max_{\omega\in\Omega_n}\bigg|\int \phi_{\omega}(x)\, dW(x)\bigg|\nonumber
 \\
 &\hspace{0.5cm}+C\frac{\|\hat{f}_{\Phi}-f\|_{L^{\infty}}}{\sqrt{n}}.\label{Intro:Error1}
 \end{align}
The first term is bounded by $\gamma_n=O(n^{-1/2}\sqrt{\log\#\Omega_n})$ as in~\eqref{Res:UnivGamma} by construction, and it represents the error that we 
allow the minimization procedure to make. The second term behaves as $O(n^{-1/2}\sqrt{\log\#\Omega_n})$ asymptotically almost surely, 
and it represents the stochastic error of the estimator. The third term arises from the compatibility between $\Phi$ and the Besov space $\besov$ stated in~\eqref{Intro:BesovCompat}. 
Inserting the result in~\eqref{Intro:DummyInt} (which requires $d\geq 2$) yields the conclusion that $\|\hat{f}_{\Phi}-f\|_{L^q}\leq C\, n^{-\frac{1}{d+2}}\, \log n$ with high probability. The bounds for $q\geq 1+2/d$ follow from H\"older's inequality between $L^{1+2/d}$ and $L^{\infty}$. The proof for $d=1$ follows the same lines, but it is slightly different. See Section~\ref{Sec:Main_Proof} for the full proof.

The inequality~\eqref{Intro:DummyInt} is sharp, in the sense that the norms in the right-hand side cannot \textit{both} be replaced by weaker norms. In this sense, it is important that our estimator~\eqref{Intro:TV_est1} combines a bound on the frame coefficients (related to the $\besov$-norm) with control on the $\bv$-seminorm. 
Finally, notice that the argument above does not rely on Gaussianity of the process $dW$: it holds whenever the random variables $\int \phi_{\omega}(x)\, dW(x)$ have subgaussian tails.

\begin{example}\label{Intro:Example1}
In order to illustrate the performance of the estimator $\hat{f}_{\Phi}$, consider the situation where $d=2$ and the dictionary $\Phi$ consists of normalized indicator functions of dyadic squares~\citep{Nemirovski},
\begin{equation*}
\Phi=\bigg\{ \frac{1}{\sqrt{|B|}}\, 1_{B}(x)\,\bigg|\, B \textup{ dyadic square }\subseteq [0,1]^2\bigg\},
\end{equation*}
where $|B|$ denotes the Lebesgue measure of the set $B$. 
Now, the estimator $\hat{f}_{\Phi}$ in~\eqref{Intro:TV_est1} becomes
\begin{equation}
 \hat{f}_{\Phi}\in\underset{g\in X_n}{\textup{ argmin }}|g|_{\bv}\ \textup{ s.t.}  \max_{\textup{dyadic } |B|\geq \frac{1}{n}}\frac{1}{\sqrt{|B|}}\bigg|\int_Bg(x)-f(x)\, dx-\frac{\sigma}{\sqrt{n}}\int_BdW(x)\bigg|\leq\gamma_n,\label{Intro:TV_estEx}
\end{equation}
that is, $\Omega_n$ consists of all squares $B\subseteq[0,1]^2$ of area $|B|\geq 1/n$ with vertices at dyadic positions. The main peculiarity of $\hat{f}_{\Phi}$ is the data-fidelity term, which encourages proximity of $\hat{f}_{\Phi}$ to the truth $f$ \textit{simultaneously} at all dyadic squares $B$. This results in an estimator that preserves features of the truth in both the large and the small scales, thus giving a \textit{spatially adaptive} estimator. This is illustrated in Figure~\ref{fig:Intro1} (see~\cite{frick2013statistical} for computational details): the estimator $\hat{f}_{\Phi}$ succeeds to reconstruct the image well at both the large (sky and building) and small scales (stairway). For comparison we also show the classical TV-regularization estimator, a.k.a.~Rudin-Osher-Fatemi (ROF) estimator~\citep{ROF}, defined in~\eqref{Intro:ROF}, which employs a global $L^2$ data-fidelity term. 
The parameter $\lambda_n$ in~\eqref{Intro:ROF} is chosen in an oracle way so as to minimize the distance to the truth, which serves as a benchmark for any data-driven parameter choice. Here we measure the "distance" by the symmetrized Bregman divergence of the $BV$ seminorm (see Section 3 of~\cite{frick2012} for a motivation for this and other distances). The ROF estimator successfully denoises the image in the large scales at the cost of losing details in the small scales. The reason is simple: the use of the $L^2$ norm as a data-fidelity, which only measures the proximity to the data \textit{globally}. As a consequence, the optimal parameter $\lambda_n$ is forced to achieve the best trade-off between regularization and data fidelity \textit{in the whole image}: in particular, in rich enough images there will be regions where one either over-regularizes or under-regularizes, e.g.~in the stairway in Figure~\ref{fig:Intro1}(d).

\begin{figure}

 \begin{center}
 \begin{tikzpicture}

  \node[anchor=south west,inner sep=0] at (0,1.4)
      {\includegraphics[width=.5\linewidth]{./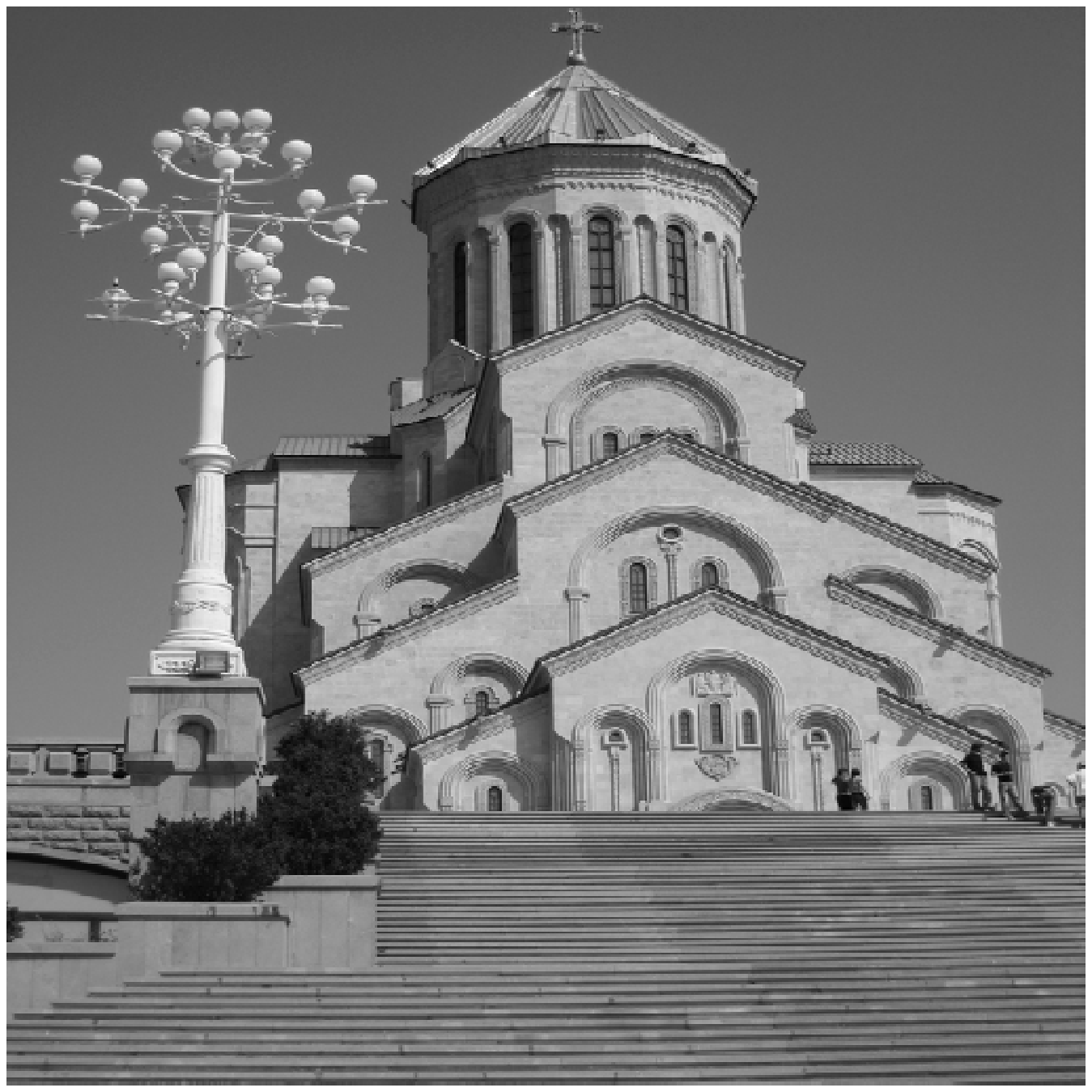}};

 \node[anchor=north] at (0.6,8) {(a)};
  
  \node[anchor=south west,inner sep=0] at (8,4.5) {\includegraphics[width=.5\linewidth,trim={{0.65\linewidth} {0.5\linewidth} 0 {0.28\linewidth}},clip]{./Church_Original.eps}};
      
      \draw[red,thin] (2.8,2.15) rectangle ++(4.12,1.32);  
      \draw[red,thin] (4.5,4.85) rectangle ++(2.4,1.5);   
      
      \node[anchor=south west,inner sep=0] at (8,1) {\includegraphics[width=.5\linewidth,trim={{0.4\linewidth} 0 0 {0.7\linewidth}},clip]{./Church_Original.eps}};

  \node[anchor=south west,inner sep=0] at (0,-5)
      {\includegraphics[width=.5\linewidth]{./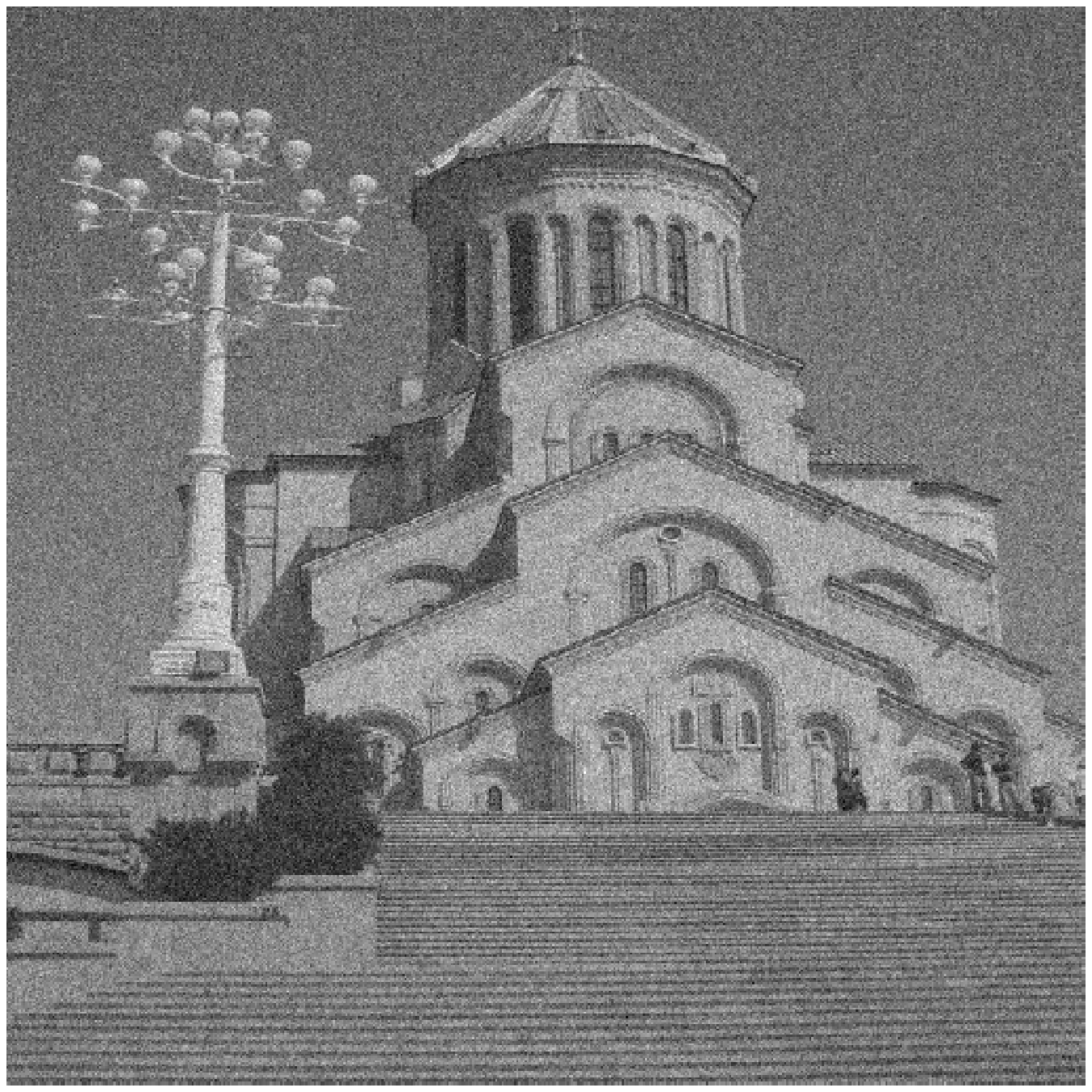}};

 \node[anchor=north] at (0.6,1.6) {(b)};
  
  \node[anchor=south west,inner sep=0] at (8,-1.9) {\includegraphics[width=.5\linewidth,trim={{0.65\linewidth} {0.5\linewidth} 0 {0.28\linewidth}},clip]{./Church_SNR5.eps}};
      
      \draw[red,thin] (2.8,2.15-6.4) rectangle ++(4.12,1.32);  
      \draw[red,thin] (4.5,4.85-6.4) rectangle ++(2.4,1.5);   
      
      \node[anchor=south west,inner sep=0] at (8,-5.4) {\includegraphics[width=.5\linewidth,trim={{0.4\linewidth} 0 0 {0.7\linewidth}},clip]{./Church_SNR5.eps}};

 \node[anchor=north] at (0.5,-5) {(c)};
 
  \node[anchor=south west,inner sep=0] at (1,-1.9-6.4) {\includegraphics[width=.5\linewidth,trim={{0.65\linewidth} {0.5\linewidth} 0 {0.28\linewidth}},clip]{./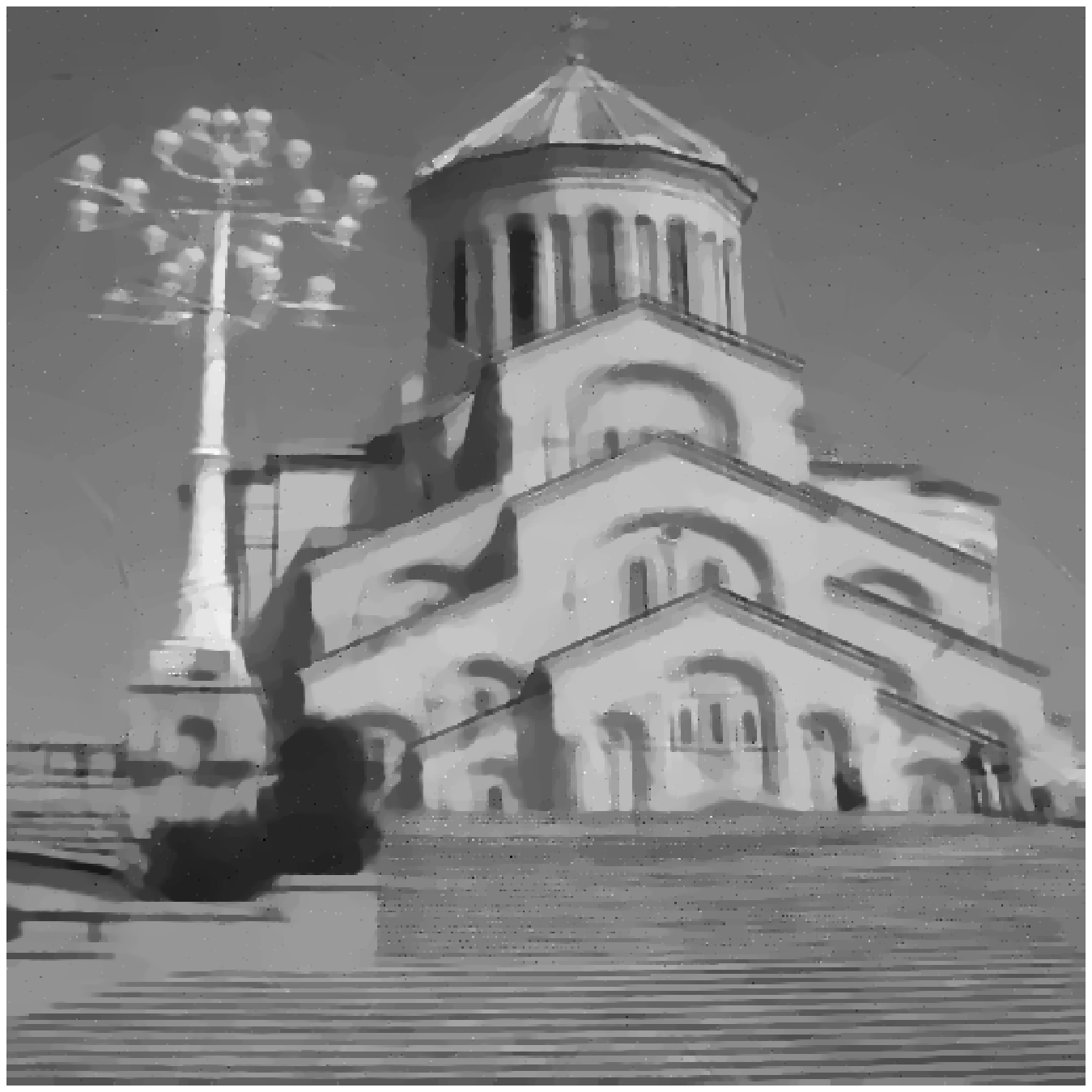}};

      \node[anchor=south west,inner sep=0] at (1,-5.4-6.4) {\includegraphics[width=.5\linewidth,trim={{0.4\linewidth} 0 0 {0.7\linewidth}},clip]{./Church_S5_MRTV_q25.eps}};

 \node[anchor=north] at (7.5,-5) {(d)};
 
  \node[anchor=south west,inner sep=0] at (8,-1.9-6.4) {\includegraphics[width=.5\linewidth,trim={{0.65\linewidth} {0.5\linewidth} 0 {0.28\linewidth}},clip]{./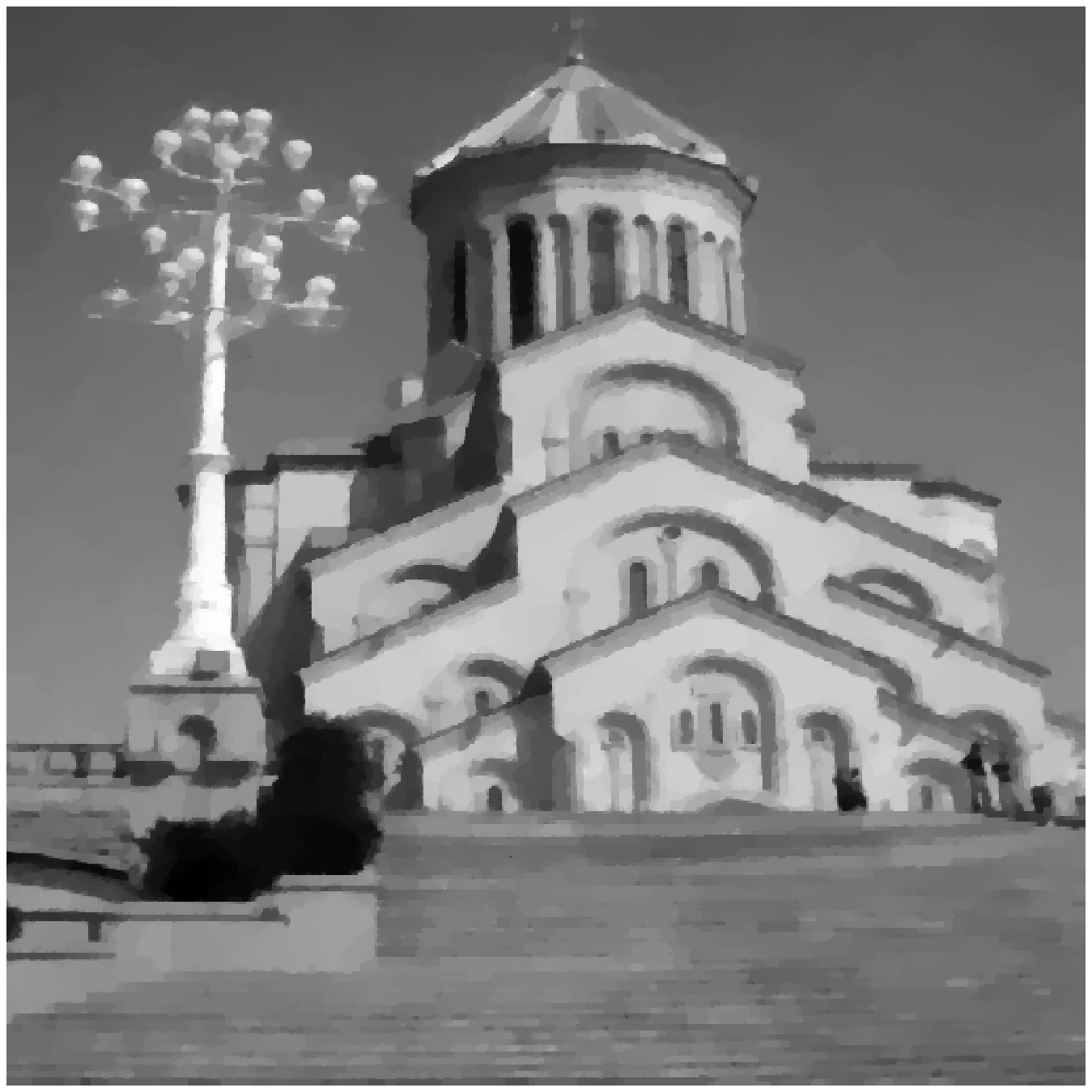}};

      \node[anchor=south west,inner sep=0] at (8,-5.4-6.4) {\includegraphics[width=.5\linewidth,trim={{0.4\linewidth} 0 0 {0.7\linewidth}},clip]{./Church_SN5_L2tv_BregOrac.eps}};
 \end{tikzpicture}
 \end{center}
 \vspace{-0cm}
 \caption{(a) Original image, (b) noisy version with signal-to-noise ratio $\textup{SNR}=5$, (c) zoom in of the multiscale TV estimator~\eqref{Intro:TV_estEx} with $\kappa=1/2$ in~\eqref{Res:UnivGamma}, and (d) zoom in of the estimator $\hat{f}_{\lambda_n}$ from~\eqref{Intro:ROF} with oracle $\lambda_n^*=\textup{argmin}\, \mathbb{E}\big[D_{BV}(\hat{f}_{\lambda_n},f)\big]$, where $D_{BV}(\cdot,\cdot)$ denotes the symmetrized Bregman divergence of the $BV$ seminorm.}\label{fig:Intro1}
\end{figure}

\end{example}

\begin{otherex}
Other estimators that minimize the $BV$ seminorm and fall into our framework~\eqref{Intro:TV_est1}, covered by Theorem~\ref{Res:Main_thm}, result from dictionaries $\Phi$ consisting of a wavelet basis (\cite{donoho1993unconditional},~\cite{hardle2012wavelets}), a curvelet frame~\citep{candes2000curvelets} or a shearlet frame~\citep{labate2005sparse}. 
Such estimators have been proposed in the literature (\cite{CandesGuo},~\cite{frick2012},~\cite{malgouyres2002}) and have been shown to perform very well in simulations, outperforming wavelet and curvelet thresholding, and TV-regularization with global $L^2$ data-fidelity, as illustrated in Figure~\ref{fig:Intro1}.
\end{otherex}

\subsection*{Related work}

This paper is related to a number of results at the cutting edge of statistics, mathematical imaging and applied harmonic analysis. As the literature is vast, we only mention some selective references. 
Starting with the seminal paper~\cite{ROF} that proposed the TV-penalized least squares estimator~\eqref{Intro:ROF} for image 
denoising (the ROF estimator), the subsequently developed theory of TV-based estimators depends greatly on the spatial dimension. 
In dimension $d=1$,~\cite{mammen1997} showed that the ROF-estimator attains the optimal rate of convergence in the discretized nonparametric 
regression model, and~\cite{donoho1998minimax} proved that wavelet thresholding for estimation over $BV$ attains the minimax rates with the exact logarithmic factors. 
We also refer to~\cite{davies2001} and~\cite{dumbgen2009} for a combination of TV-regularization with related multiscale data-fidelity terms in $d=1$, and to~\cite{frick2014} and~\cite{li2017multiscale} for the combination of a multiscale constraint with a jump penalty for segmentation of one-dimensional functions

In higher dimensions, the situation becomes more involved due to the low regularity of functions of bounded variation. There are roughly two approaches to deal with this: either employ a finer data-fidelity term, or discretize the problem. Concerning the first approach, we distinguish three different variants that are related to our work. 
First,~\cite{Meyer} proposed the replacement of the 
$L^2$-norm in the ROF functional by a weaker norm designed to match the smoothness of Gaussian noise. 
Several algorithms and theoretical frameworks using the Besov norm $B^{-1}_{\infty,\infty}$~\citep{garnett2007}, 
the $G$-norm~\citep{haddad2007} 
and the Sobolev norm $H^{-1}$ in $d=2$~\citep{osher2003image} were proposed, but the statistical performance of these 
estimators has not been analyzed. 
A second variant (see~\cite{durand2001artifact},~\cite{malgouyres2001unified} and~\cite{malgouyres2002}) involved 
estimators of the form~\eqref{Intro:TV_est1} with a wavelet basis $\Phi$. Following this approach and the development of 
curvelets (see e.g.~\cite{candes2000curvelets} for an early reference),~\cite{CandesGuo} and~\cite{starck2001} proposed the 
estimator~\eqref{Intro:TV_est1} with $\Phi$ being a curvelet frame and a mixed curvelet and wavelet family, respectively, which showed good numerical behavior. 
The third line of development that leads to the estimator~\eqref{Intro:TV_est1} is based on Nemirovski's work~\cite{nemirovskii1985nonparametric}, who credits S.~V.~Shil'man for the original idea (see also~\cite{Nemirovski}), and on Donoho's work on soft-thresholding~\cite{donoho1993unconditional}. Nemirovski proposed 
a variational estimator for nonparametric regression over H\"older and Sobolev spaces that used a data-fidelity term based on the 
combination of local likelihood ratio (LR) tests: the \textit{multiresolution norm}. 
In statistical inverse problems,~\cite{dong2011automated} proposed an estimator using TV-regularization constrained by the \textit{sum} of local averages of residuals, instead of the maximum we employ in~\eqref{Intro:TV_est1}, which was proposed by~\cite{frick2012}. 
Finally, during revision of this work we became aware of the work by~\cite{fang2019}, who consider estimation of functions of bounded variation in the sense of Hardy-Krause. This class of functions has higher regularity than $BV$, and hence is much smaller: it corresponds roughly to Sobolev $W^{d,1}$ functions, i.e., with $d$ partial derivatives in $L^1$, which explains the faster minimax rate $n^{-1/3}$ in any dimension.

The other approach to TV-regularization in higher dimensions is to discretize the observational model~\eqref{Intro:WNmodel}, thereby reducing the problem of estimating a function $f\in BV$ to that of estimating a vector of function values $(f(x_1),\ldots,f(x_n))\in\mathbb{R}^n$, where $\{x_i\}$ are design points in $[0,1]^d$. In particular, the risk is measured by the \textit{Euclidean norm} of $\mathbb{R}^n$, and not by the continuous $L^2$-norm. TV-regularized least squares in this discrete setting is nowadays fairly well understood. We mention~\cite{dalalyan2017} and~\cite{hutter2016optimal}, who proved convergence rates in any dimension $d$, which were shown to be minimax optimal in that model~\cite{sadhanala2016total}. Its generalization to trend-filtering, where higher order derivatives are assumed to belong to $BV$, is a current research topic~\cite{guntuboyina2017spatial},~\cite{wang2016trend}. However, this discretized model is substantially different from the continuous model that we consider. In fact, the works just mentioned deal with a finite dimensional parameter space of discretized signals and regularize with the $\ell^1$-norm of the discrete gradient, which in the limit of finer discretization converges to the Sobolev $W^{1,1}$ seminorm. Hence, $BV$ functions are indistinguishable from Sobolev $W^{1,1}$ functions in the discretized model for any dimension $d\in\mathbb{N}$. However, the difference between $W^{1,1}$ and $BV$ functions is significant: while the gradients of the former are finite Lebesgue continuous measures, the gradients of the latter can be any finite Radon measure, i.e.~Lebesgue singular measures are allowed.
Consequently, $BV$ functions can have jump singularities, which makes their estimation significantly more challenging than estimating a Sobolev function. 
Therefore, in contrast to the analysis of discrete TV-regularization, the continuous setting is more subtle and genuinely analytical tools are needed, such as the interpolation inequality~\eqref{Intro:DummyInt}. Moreover, a limitation of discretized models is that they typically discretize the functions and the TV functional with respect to the \textit{same} grid. The discretization of the signals is usually determined by the application, while different discretizations of the TV functional can have different effects (see e.g.~\cite{condat2017}). It is hence useful to study the estimation of $BV$ functions in the continuous setting, since it gives insight into the estimation problem, independently of the discretization of signals or functionals.

Regarding the tools and techniques we use, we mention in particular the concept of an interpolation inequality that relates the risk functional, the regularization functional and the data-fidelity term (see~\cite{nemirovskii1985nonparametric} and~\cite{grasmair2015}). While the inequality in those papers is essentially the Gagliardo-Nirenberg inequality for Sobolev norms (see Lecture II in~\cite{nirenberg1959}), we extend and make use of interpolation inequalities for the $BV$ norm, e.g. equation~\eqref{Intro:DummyInt}, see~\cite{cohen2003harmonic} and~\cite{ledoux2003}. 
Finally, as opposed to~\cite{grasmair2015}, we formulate our results in the white noise model. This eases the incorporation of results from harmonic analysis (e.g.~the interpolation inequalities between $BV$ and $\besov$ and the characterization of Besov spaces by local means) into our statistical analysis, as discretization effects (due to sampling) do not occur. See, however, Section~\ref{Sect:Summary} for a discussion of our results in the latter case.

\subsubsection*{Organization of the paper}

In Section~\ref{Sect:Res} we state general assumptions on the family $\Phi$ under which the estimator $\hat{f}_{\Phi}$ is shown to be nearly minimax optimal over the set $BV_L$. We give a complete statement of the Main Theorem. Then we present examples of the estimator~\eqref{Intro:TV_est1} where $\Phi$ is a 
wavelet basis, a multiresolution system, and a curvelet or shearlet frame combined with wavelets, and show their almost minimax optimality for $L^q$-risks, $q\geq 1$. The proof of the main theorem is given in Section~\ref{Sec:Main_Proof}, while several analytical results are relegated to the Supplement. In Section~\ref{Sect:Summary} we briefly discuss possible extensions.

\subsubsection*{Notation}
We denote the Euclidean norm of a vector $v=(v_1,\ldots,v_d)\in\Rd$ by $|v|:=\big(v_1^2+\cdots+v_d^2\big)^{1/2}$. 
For a real number $x$, define $\lfloor x\rfloor:=\textup{max}\big\{m\in\mathbb{Z}\, \big|\, m\leq x\big\}$ 
and $\lceil x\rceil:=\textup{min}\big\{m\in\mathbb{Z}\, \big|\, m>x\big\}$. The cardinality of a finite set $X$ is denoted by 
$\# X$. We say that two norms $\|\cdot\|_{\alpha}$ and $\|\cdot\|_{\beta}$ in a normed space $V$ are equivalent, and write $\|v\|_{\alpha}\asymp \|v\|_{\beta}$, if there are constants $c_1,c_2>0$ such that $c_1\|v\|_{\alpha}\leq \|v\|_{\beta}\leq c_2\|v\|_{\alpha}$ for all $v\in V$. 
Finally, we denote by $C$ a generic positive constant that may change from line to line.

\newpage

\section{Results}\label{Sect:Res}

\subsection{Basic definitions}\label{Res:Basic_Def}

For $k\in\mathbb{N}$, let $C^k$ denote the space of $k$-times continuously differentiable periodic functions on $[0,1)^d$, which we identify with the $d$-torus $\Td$. 
The space of $1$-periodic functions of bounded variation $BV$ consists of functions $g\in L^1$ whose weak distributional gradient $\nabla g=(\partial_{x_1}g,\cdots,\partial_{x_d}g)$ is a periodic, $\mathbb{R}^d$-valued finite Radon measure on $[0,1)^d$~\cite{Evans}. 
The finiteness implies that the bounded variation seminorm of $g$, defined by
\begin{equation*}
 |g|_{BV}:=\sup\bigg\{\int_{\mathbb{T}^d}g(x)\, \mathrm{div}(h(x))\, dx\, \bigg|\, h\in C^1(\mathbb{T}^d;\mathbb{R}^d),\ \|h\|_{L^{\infty}}\leq 1\bigg\},
\end{equation*}
is finite, where $\mathrm{div}(h)$ denotes the divergence of the vector field $h$. $BV$ is a Banach space with the norm 
$\|g\|_{BV}=\|g\|_{L^1}+|g|_{BV}$, see~\cite{Evans}. 
For $S\in\mathbb{N}$, let $\Phi=\big\{\psi_{j,k,e}\, \big|\, (j,k,e)\in\Omega\big\}$ 
be an $S$-regular wavelet basis for $L^2$ whose 
elements are $S$ times continuously differentiable with absolutely integrable $S$-th derivative, indexed by the set
\begin{align}
 \Omega&:=\big\{(j,k,e)\, \big|\, j\geq 0,\ k\in P_j^d, e\in E_j\big\},\ \ \textup{ with }\label{Res:WaveIndex}
 \\
 P_j^d&:=\big\{k=(k_1,\ldots,k_d)\,\big|\, k_i=0,\ldots,2^j-1,\ i=1,\ldots,d\big\},\nonumber
 \\
 E_j&:=\begin{cases}
       \{0,1\}^d & \textup{ if } j=0,
       \\
       \{0,1\}^d\backslash(0,\ldots,0) & \textup{ else.}
      \end{cases}\nonumber
\end{align}
In particular, we consider wavelets of the form
\begin{equation*}
\psi_{j,k,e}(x)=2^{jd/2}\psi_e\big(2^jx-k\big),
\end{equation*}
where $\psi_e(z_1,\cdots,z_d)=\prod_{i=1}^d\psi_{e_i}(z_i)$ is a tensor product of periodized one-dimensional wavelets, and
\begin{equation*}
\psi_{e_i}(\cdot)=\begin{cases}
\psi(\cdot) \textup{ if } e_i=1,
\\
\varphi(\cdot) \textup{ else},
\end{cases}
\end{equation*}
denotes either the mother wavelet or the father wavelet of a one-dimensional wavelet basis of $L^2$. The index $(0,\cdots,0)\in E_0$ refers here to (shifts of) the father wavelet $\psi_{0,k,0}=\varphi(\cdot-k)$. 
See e.g.~Section 4.3.6 in~\cite{gine2015mathematical} for the construction of such a basis. Then for $p,q\in[1,\infty]$ and 
$s\in\mathbb{R}$ with $S>|s|$, the Besov norm of a (generalized) function is defined by 
\begin{equation}
 \|g\|_{B^s_{p,q}}:=\bigg(\sum_{j\in\mathbb{N}_0}2^{jq\big(s+d(\frac{1}{2}-\frac{1}{p})\big)}\bigg(\sum_{k\in P_j^d}\sum_{e\in E_j}|\langle \psi_{j,k,e},g\rangle|^p\bigg)^{q/p}\bigg)^{1/q},\label{Res:BesovNorm}
\end{equation}
with the usual modifications if $p=\infty$ or $q=\infty$. If $s> 0$ and $p\in[1,\infty)$, the Besov space $B^s_{p,q}$ consists of $L^p$ functions with finite Besov norm, while if $s> 0$ and $p=\infty$, then $B^s_{p,q}$ consists of 
continuous functions with finite Besov norm. In these cases, $\langle \cdot,\cdot\rangle$ denotes the standard inner product in 
$L^2$. If $s\leq 0$, $B^s_{p,q}$ consists of periodic distributions $\mathcal{D}^*(\Td)$ 
with finite Besov norm. Here, $\mathcal{D}^*(\Td)$ denotes the space of 
periodic distributions, defined as the topological dual to the space of infinitely differentiable periodic functions 
$C^{\infty}(\Td)$ (see Section 4.1.1 in~\cite{gine2015mathematical}). In that case, $\langle \psi_{j,k,e},g\rangle$ is 
interpreted as the action of $g\in\mathcal{D}^*(\Td)$ on the function $\psi_{j,k,e}$.

Finally, we define the Fourier transform of a function $g\in L^1(\Td)$ by
\begin{equation}
 \mathcal{F}[g](\xi):=\int_{\Td}g(x)\, e^{-2\pi i \xi x}\, dx, \ \ \ \xi\in\Zd.\label{Res:Notation1}
\end{equation}
The Fourier transform of a function $g\in L^1(\Rd)$ is defined as in~\eqref{Res:Notation1} extending the integration over $\Rd$. 
The formal definition of the Fourier transform is as usual extended to functions in $L^2$ and, by duality, to 
distributions $\mathcal{D}^*(\Td)$ (see e.g.~Section 4.1.1 in~\cite{gine2015mathematical}).

\subsection{Main result}\label{Sec:Main}

The main ingredient of the estimator~\eqref{Intro:TV_est1} is the dictionary $\Phi$, on which we impose the 
following assumptions.
\begin{assumption}\label{Res:Gen_Ass}
 $\Phi$ is of the form $\Phi=\{\phi_{\omega}\, \big|\, \omega\in\Omega\}\subset L^2$ for a countable set $\Omega$ and 
functions satisfying 
$\|\phi_{\omega}\|_{L^2}=1$ for all $\omega\in\Omega$. 
For each $n\in\mathbb{N}$, consider a subset $\Omega_n\subset\Omega$ of polynomial growth, meaning that $c\, n^{\Gamma}\leq \#\Omega_n\leq Q(n)$ for all $n$ for a polynomial $Q$ and constants $c,\Gamma>0$. The sets $\Omega_n$ are assumed to satisfy the inequality~\eqref{Intro:BesovCompat} for any $g\in L^{\infty}$.
\end{assumption}

\begin{examples}
 \quad
 \begin{itemize}
  \item[a)] The simplest example of a system $\Phi$ satisfying Assumption~\ref{Res:Gen_Ass} is a sufficiently smooth wavelet basis. Indeed, 
  the assumption follows from the characterization of Besov spaces in terms of wavelets (see Proposition~\ref{Res:Veri_Wavelet} below).
  \item[b)] Another family $\Phi$ satisfying Assumption~\ref{Res:Gen_Ass} is given by translations and rescalings of 
  (the smooth approximation to) the indicator function of a cube. In Section~\ref{Ex:MultiEst} we verify the assumption for such 
  a system, that has been used previously as a dictionary for function estimation (see~\cite{grasmair2015}).
  \item[c)] In Section~\ref{Ex:sheCurv} we show that frames containing a smooth wavelet basis and a curvelet or a shearlet frame (which play a prominent role in imaging) satisfy 
  Assumption~\ref{Res:Gen_Ass}.
 \end{itemize}
\end{examples}

\begin{definition}
Assume the model~\eqref{Intro:WNmodel}, and let $Y_{\omega}$ be as in~\eqref{Intro:Ycoeff} the projections of the white noise model onto a dictionary $\Phi$ satisfying Assumption~\ref{Res:Gen_Ass}. We denote the estimator in~\eqref{Intro:TV_est1} as \textit{frame-constrained TV-estimator} with respect to the dictionary $\Phi$, where we minimize over the set
\begin{equation}
 X_n:=\big\{g\in BV\cap L^{\infty}\, \big|\, \|g\|_{L^{\infty}}\leq \log n\big\}.\label{Res:Aux_Set}
\end{equation}
We use the convention in~\eqref{Intro:TV_est1} that, whenever the \textup{argmin} is taken over the empty set, $\hat{f}_{\Phi}$ is the constant zero function.
\end{definition}

In the following we assume that $n\geq 2$, so that we do not have to worry about the case $\log 1=0$. The reason for the additional constraint $\|g\|_{L^{\infty}}\leq \log n$ is technical: We will need upper bounds on the supremum norm of $\hat{f}_{\Phi}$. As it turns out, the upper bound $\log n$ will not affect the minimax polynomial rate of convergence of the estimator (but it yields additional logarithmic factors in the risk). Alternatively, if we knew an upper bound $L$ for the supremum norm of $f$, we could choose $X_n=\{g\in BV\cap L^{\infty}\, |\, \|g\|_{L^{\infty}}\leq L\}$. In that case, the risk bounds in Theorem~\ref{Res:Main_thm}  would improve in some logarithmic factors (see Remark~\ref{Res:RemPolyLog}).

\begin{theorem}\label{Res:Main_thm}
Let $d\in\mathbb{N}$, and assume the model~\eqref{Intro:WNmodel} with $f\in BV_L$ for some $L>0$. Let further $q\in \big[1,\infty\big)$.
\begin{itemize}
 \item[a)] Let $\gamma_n$ be as in~\eqref{Res:UnivGamma} with $\kappa>1$, and let $\Phi$ be a family of functions satisfying Assumption~\ref{Res:Gen_Ass}. Then for any $n\in\mathbb{N}$ with 
 $n\geq e^L$, the estimator $\hat{f}_{\Phi}$ in~\eqref{Intro:TV_est1} with parameter $\gamma_n$ satisfies
 \begin{equation}
    \sup_{f\in BV_L}\|\hat{f}_{\Phi}-f\|_{L^q}\leq C\, n^{-\min\{\frac{1}{d+2},\frac{1}{dq}\}}\, (\log n)^{3-\min\{d,2\}}\label{Res:Gen_Conv}
 \end{equation}
with probability at least $1-\big(\#\Omega_n\big)^{1-\kappa^2}$.
\item[b)] Under the assumptions of part a),  if $\kappa^2>1+\frac{1}{(d+2)\, \Gamma}$ with $\Gamma$ as in Assumption~\ref{Res:Gen_Ass}, then
 \begin{equation}
    \sup_{f\in BV_L}\mathbb{E}\big[\|\hat{f}_{\Phi}-f\|_{L^q}\big]\leq C\, n^{-\min\{\frac{1}{d+2},\frac{1}{dq}\}}\, (\log n)^{3-\min\{d,2\}}\label{Res:Gen_Conv_Exp}
 \end{equation}
 holds for $n$ large enough and a constant $C>0$ independent of $n$.
\end{itemize}
\end{theorem}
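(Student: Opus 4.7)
The proof naturally splits into a probabilistic step that controls the noise and a deterministic step that transfers the multiscale constraint into an $L^q$-bound via the harmonic-analytic tools~\eqref{Intro:BesovCompat} and~\eqref{Intro:DummyInt}. The probabilistic step introduces the good event
\begin{equation*}
\mathcal{E}_n := \biggl\{\max_{\omega\in\Omega_n}\frac{\sigma}{\sqrt n}\,\Bigl|\int_{\Td}\phi_\omega\,dW\Bigr|\leq \gamma_n(\kappa)\biggr\}.
\end{equation*}
Since $\|\phi_\omega\|_{L^2}=1$, each stochastic integral is a standard Gaussian, and the union bound combined with $\mathbb{P}(|Z|>t)\leq e^{-t^2/2}$ yields $\mathbb{P}(\mathcal{E}_n)\geq 1-(\#\Omega_n)^{1-\kappa^2}$ for $\kappa>1$. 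On $\mathcal{E}_n$ the data-fidelity $|Y_\omega-\langle\phi_\omega,f\rangle|\leq\gamma_n$ holds for all $\omega\in\Omega_n$, and $n\geq e^L$ places $f$ in $X_n$, so the truth is feasible in~\eqref{Intro:TV_est1}. Minimality and the triangle inequality then give $|\hat f_\Phi|_{\bv}\leq|f|_{\bv}\leq L$ and $\max_{\omega\in\Omega_n}|\langle\phi_\omega,\hat f_\Phi-f\rangle|\leq 2\gamma_n$.

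\textbf{Transfer to an $L^q$-rate.} On $\mathcal{E}_n$, applying the compatibility inequality~\eqref{Intro:BesovCompat} to $g=\hat f_\Phi-f$ and using $\|\hat f_\Phi-f\|_{L^\infty}\leq \log n+L\leq 2\log n$ yields
\begin{equation*}
\|\hat f_\Phi-f\|_{\besov}\leq 2C\gamma_n+2C\,\frac{\log n}{\sqrt n}\leq C\,\frac{\log n}{\sqrt n},
\end{equation*}
because the polynomial growth of $\#\Omega_n$ forces $\gamma_n\asymp\sqrt{\log n/n}$. For $d\geq 2$ and $q\in[1,1+2/d]$, plugging this into the interpolation inequality~\eqref{Intro:DummyInt} together with $|\hat f_\Phi-f|_{\bv}\leq 2L$ gives
\begin{equation*}
\|\hat f_\Phi-f\|_{L^q}\leq C\bigl(\log n/\sqrt n\bigr)^{2/(d+2)}(2L)^{d/(d+2)}\leq C_L\, n^{-1/(d+2)}\log n.
\end{equation*}
For $q>1+2/d$, the log-convex interpolation $\|h\|_{L^q}\leq\|h\|_{L^{1+2/d}}^{(1+2/d)/q}\|h\|_{L^\infty}^{1-(1+2/d)/q}$ combined with $\|\hat f_\Phi-f\|_{L^\infty}\leq 2\log n$ promotes this to the claimed rate $n^{-1/(dq)}\log n$. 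The case $d=1$ follows the same scheme but exploits the one-dimensional embedding $\bv\hookrightarrow L^\infty$ and an interpolation pairing $B^{-1/2}_{\infty,\infty}$ with $\bv$; the extra logarithmic factor in the statement reflects the use of the $L^\infty$ bound in place of sharper (and unavailable) direct $\besov$-control.

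\textbf{Passage to expectation and main obstacle.} On the complementary event, the trivial a-priori bound $\|\hat f_\Phi-f\|_{L^q}\leq\|\hat f_\Phi-f\|_{L^\infty}\leq 2\log n$ (valid because $|\Td|=1$) gives
\begin{equation*}
\mathbb{E}\|\hat f_\Phi-f\|_{L^q}\leq C_L\, n^{-\min(1/(d+2),1/(dq))}(\log n)^{3-\min(d,2)}+2\log n\cdot(\#\Omega_n)^{1-\kappa^2},
\end{equation*}
and the lower bound $\#\Omega_n\geq cn^\Gamma$ together with the hypothesis $\kappa^2>1+1/((d+2)\Gamma)$ forces the second term below the first, yielding part (b). The main technical obstacle I expect is the interpolation inequality~\eqref{Intro:DummyInt}: its original proof in~\cite{cohen2003harmonic} is on $\mathbb{R}^d$, and transferring it to the periodic torus requires care through wavelet characterizations of both $\besov$ and $\bv$. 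A secondary subtlety is the accounting of the $L^\infty$-correction term in~\eqref{Intro:BesovCompat}, which is the only place the artificial constraint $\|g\|_{L^\infty}\leq\log n$ in the feasible set $X_n$ enters and which determines the final exponent of $\log n$.
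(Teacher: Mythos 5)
Your proposal is correct and follows essentially the same route as the paper: the same good event, the same feasibility argument giving $|\hat f_\Phi|_{\bv}\leq|f|_{\bv}$ and the $2\gamma_n$ bound on the coefficients, the compatibility inequality~\eqref{Intro:BesovCompat} followed by the interpolation inequality~\eqref{Intro:DummyInt}, H\"older for $q>1+2/d$, and the same splitting over the bad event for part (b). The only slight imprecision is that~\eqref{Intro:DummyInt} involves the full norm $\|\cdot\|_{BV}$, whose $L^1$ part is only bounded by $L+\log n$ rather than a constant, which is exactly where the paper's $(\log n)^{d/(d+2)}$ contribution (and hence the stated logarithmic exponent) comes from.
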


\begin{remark}\label{Res:RemNoise}
\quad
 \begin{itemize}
 \item[a)] Notice that part a) of the theorem implies that~\eqref{Res:Gen_Conv} holds asymptotically almost surely if $\kappa^2>2$.
  \item[b)] By the assumption that $\|\phi_{\omega}\|_{L^2}=1$ $\forall\omega\in\Omega$, we have the tail bound
  \begin{equation*}
  \mathbb{P}\bigg(\max_{\omega\in\Omega_n}\bigg|\int_{\mathbb{T}^d}\phi_{\omega}(x)\, dW(x)\bigg|\geq t\bigg)\leq \#\Omega_n\, e^{-t^2/2},
\end{equation*}
  for any $n\in\mathbb{N}$ and $t\geq 0$, where $dW$ denotes the white noise process in $L^2(\Td)$. This bound follows from Chernoff's inequality and the union bound, and it will play an important role for bounding the stochastic estimation error of the estimator $\hat{f}_{\Phi}$.
 \end{itemize}
\end{remark}

\begin{remark}\label{Res:RemPolyLog}
The logarithmic factors in~\eqref{Res:Gen_Conv} and~\eqref{Res:Gen_Conv_Exp} are equal to $(\log n)^2$ for $d=1$ and to $\log n$ for $d\geq 2$. They arise in part from the bound $\|\hat{f}_{\Phi}\|_{L^{\infty}}\leq \log n$ (that we get from minimizing over $X_n$ in~\eqref{Res:Aux_Set}), while part of them arise from the estimation procedure itself. Indeed, if we additionally constrain the estimator to $\|\hat{f}_{\Phi}\|_{L^{\infty}}\leq C$, the factors can be improved to $(\log n)^{1+\min\{\frac{1}{d+2},\frac{1}{dq}\}}$ 
and $(\log n)^{\min\{\frac{1}{d+2},\frac{1}{dq}\}}$ for $d=1$ and $d\geq 2$, respectively. See Proposition~\ref{InterpInequality} in Section~\ref{Sec:Main_Proof} for an explanation of 
the different factors in $d=1$ and $d\geq 2$.
\end{remark}

\begin{remark}\label{Res:SharpJackson}
Recall that our parameter set $BV_L$ involves a bound on the supremum norm. 
This bound can be relaxed to a bound on the Besov $B^0_{\infty,\infty}$ norm without 
changing the convergence rate $n^{-\min\{\frac{1}{d+2},\frac{1}{dq}\}}$ for $\hat{f}_{\Phi}$. 
Indeed, assume for simplicity that $\Phi$ is an orthonormal wavelet basis of $L^2$, and for $n\in\mathbb{N}$ let $\Omega_n$ 
index the wavelet coefficients up to level $J=\big\lfloor\frac{1}{d}\frac{\log n}{\log 2}\big\rfloor$. In the proof of 
Theorem~\ref{Res:Main_thm} we need a relaxed form of Assumption~\ref{Res:Gen_Ass}, namely an inequality of the form
\begin{equation}
 \max_{(j,k,e)\in\Omega}|\langle\psi_{j,k,e},g\rangle|\leq\max_{(j,k,e)\in\Omega_n}|\langle\psi_{j,k,e},g\rangle|+C2^{-Jd/2}\ \ \forall J\in\mathbb{N}\label{Res:Bernstein1}
\end{equation}
for sufficiently smooth $g$. But this inequality for all $J\in\mathbb{N}$ is equivalent to 
$\|g\|_{B^0_{\infty,\infty}(\Td)}\leq C$ (see Berstein-type inequalities for Besov spaces, e.g.~in Section 3.4 
in~\cite{cohen2003numerical}). Consequently, Theorem~\ref{Res:Main_thm} can be extended to show that the estimator 
$\hat{f}_{\Phi}$ with an orthonormal wavelet basis $\Phi$ attains 
the optimal polynomial rates of convergence uniformly over the enlarged parameter space 
$\widetilde{BV}_L:=\big\{g\in BV\, \big|\, |g|_{BV}\leq L,\ \ \|g\|_{B^0_{\infty,\infty}}\leq L\big\}$. 

One could ask whether the requirement $\|g\|_{B^0_{\infty,\infty}}\leq L$ can be relaxed further. This is not the case if $d\geq 2$. 
Indeed, since the embedding $B^1_{1,\infty}\subset B^{0}_{\infty,\infty}$ holds for $d=1$ only (see~\eqref{Res:BesovNorm}), and since we 
have $BV\subset B^1_{1,\infty}$, we see that a typical function of bounded variation does not belong to 
$B^0_{\infty,\infty}$ if $d\geq 2$. Hence, the Jackson-type inequality in~\eqref{Res:Bernstein1} cannot hold for 
general functions of bounded variation in $d\geq 2$. 
This explains why our parameter space is the intersection of a $BV$-ball with an $L^{\infty}$-ball (or a $B^0_{\infty,\infty}$-ball). Finally, we remark that most works in function estimation deal 
with H\"older or Sobolev functions with $k>d/p$, so the assumption $f\in L^{\infty}$ is implicit. Alternatively, we refer to 
Section 3 in~\cite{lepski1997optimal} and to~\cite{DelyonJuditski} for examples 
of estimation over Besov bodies $B^s_{p,q}$ where uniform boundedness has to be assumed explicitly if $s< d/p$.
\end{remark}

\begin{remark}\label{Res:RemPeriodic}
In this work we deal with the estimation of periodic functions, i.e.~defined on the $d$-torus $\Td$. The reason for that is purely technical: our analysis makes use of Banach spaces of functions, whose definition is simpler for functions defined over $\Td$ (a manifold without boundary) than over the hypercube $[0,1]^d$ (which has a boundary). We remark that our work could be extended to function spaces over $[0,1]^d$ by the use of boundary corrected wavelet bases (see Section 4.3.5 in~\cite{gine2015mathematical}), and adapting the definitions of Besov and $BV$ spaces and their corresponding norms. 
\end{remark}

We can now state the main result of this paper, which is a direct consequence of Theorem~\ref{Res:Main_thm}.

\begin{theorem}\label{MinimaxThm}
Under the assumptions of Theorem~\ref{Res:Main_thm}, the estimator $\hat{f}_{\Phi}$ is minimax optimal up to logarithmic factors over the parameter set $BV_L$ defined in~\eqref{Intro:Param1} with respect to the $L^q$-risk for $q\in\big[1,\infty\big)$ in any dimension $d\in\mathbb{N}$, i.e.,
 \begin{equation*}
    \inf_{\hat{f}}\sup_{f\in BV_L}\mathbb{E}\big[\|\hat{f}-f\|_{L^q}\big]\geq C\, n^{-\min\{\frac{1}{d+2},\frac{1}{dq}\}}\label{Res:Lower}
 \end{equation*}
 for any $q\in[1,\infty)$, where the infimum runs over all measurable functions from the sample space of $dY$ in~\eqref{Intro:WNmodel} to the reals.
\end{theorem}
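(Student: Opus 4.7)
The upper bound is already contained in Theorem~\ref{Res:Main_thm}, so only the matching lower bound
\[
\inf_{\hat f}\sup_{f\in BV_L}\mathbb{E}\big[\|\hat{f}-f\|_{L^q}\big]\ \geq\ c\, n^{-\min\{\frac{1}{d+2},\frac{1}{dq}\}}
\]
needs to be established. The plan is a standard Assouad-type lower bound over a hypercube of disjoint bump functions, tuned differently in the two regimes $q\in[1,1+2/d]$ and $q>1+2/d$.

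Fix a smooth bump $\psi\in C^\infty_c((0,1)^d)$ with $|\psi|_{BV},\|\psi\|_{L^\infty},\|\psi\|_{L^q},\|\psi\|_{L^2}$ all strictly positive and of order one. For a scale $h\in(0,1)$, amplitude $a>0$, and a set $S\subset h\Zd\cap[0,1)^d$ of $M\leq h^{-d}$ grid nodes, set $\psi_k(x):=\psi\bigl((x-k)/h\bigr)$ and, for each $\epsilon\in\{0,1\}^M$,
\[
f_\epsilon\ :=\ a\sum_{k\in S}\epsilon_k\psi_k,
\]
viewed as a periodic function on $\Td$. Since the supports of the $\psi_k$ are pairwise disjoint, $\|f_\epsilon\|_{L^\infty}\lesssim a$, $|f_\epsilon|_{BV}\lesssim M\,a\,h^{d-1}$, and, with $\rho_H$ the Hamming distance,
\[
\|f_\epsilon-f_{\epsilon'}\|_{L^q}^q\,\asymp\,a^q h^d\,\rho_H(\epsilon,\epsilon'), \qquad \|f_\epsilon-f_{\epsilon'}\|_{L^2}^2\,\asymp\,a^2 h^d\,\rho_H(\epsilon,\epsilon').
\]
The single-flip Kullback--Leibler divergence in~\eqref{Intro:WNmodel} is therefore of order $n a^2 h^d/\sigma^2$. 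Assouad's lemma (e.g., Theorem~2.12 in~\cite{tsybakov2009introduction}), combined with the elementary inequality $x^{1/q}\geq x/M^{1-1/q}$ for $x\in[0,M]$ (used to promote the $q$-th moment bound naturally produced by Assouad to a first-moment bound), then gives
\[
\inf_{\hat f}\max_{\epsilon\in\{0,1\}^M}\mathbb{E}\bigl[\|\hat f-f_\epsilon\|_{L^q}\bigr]\ \geq\ c\, M^{1/q}\, a\, h^{d/q},
\]
provided that $n a^2 h^d$ is bounded above by a small enough universal constant.

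It remains to pick $h,a,M$ so as to enforce $f_\epsilon\in BV_L$ (i.e.~$a\lesssim L$ and $Mah^{d-1}\lesssim L$) together with the KL constraint, while maximising the Assouad rate $M^{1/q} a h^{d/q}$. In the \emph{flat} regime $q\in[1,1+2/d]$ I would take $h=n^{-1/(d+2)}$, $a=Lh$ and $M=\lfloor h^{-d}\rfloor$: the BV budget saturates ($Mah^{d-1}\asymp L$), so does the KL budget ($na^2h^d\asymp L^2$), and the Assouad rate becomes $Lh=Ln^{-1/(d+2)}$. In the \emph{spiky} regime $q>1+2/d$ I would instead take $h=n^{-1/d}$, $a=L$ and $M=\lfloor h^{1-d}\rfloor=\lfloor n^{(d-1)/d}\rfloor$: again both budgets saturate, and the rate becomes $M^{1/q}Lh^{d/q}\asymp L n^{-1/(dq)}$. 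The two expressions coincide at $q=1+2/d$, together giving the minimax lower bound for every $q\in[1,\infty)$. The main subtlety is that a two-hypothesis Le Cam test only produces the weaker bound $n^{-1/q}$; the correct (slower) rate $n^{-1/(dq)}$ for large $q$ genuinely requires a hypercube of $M\asymp n^{(d-1)/d}$ \emph{simultaneous} unit-height spikes of width $n^{-1/d}$, arranged so that the BV budget and the KL budget saturate at the same time. The remaining bookkeeping, namely placing the bumps on a periodic grid in $\Td$ and tracking the explicit constants in Assouad, is routine.
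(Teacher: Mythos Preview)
Your proposal is correct and follows essentially the same route as the paper: an Assouad hypercube of disjointly supported bumps, with the single-flip KL controlled via the $L^2$ separation in the white noise model. The paper uses Daubechies wavelets in place of your smooth bumps and, for the spiky regime $q>1+2/d$, makes exactly your parameter choices ($h\asymp n^{-1/d}$, constant amplitude, $M\asymp n^{(d-1)/d}$ bumps); for the dense regime it simply defers to the classical literature, whereas you spell out the matching construction explicitly.
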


The proof of Theorem~\ref{MinimaxThm} is given in Section~\ref{SM:minimax}. It consists of proving a lower bound for the minimax risk over $BV_L$, which we show agrees with the upper bound proven in Theorem~\ref{Res:Main_thm}.

\subsection{Examples}\label{Sect:Examps}

\subsubsection{Wavelet-based estimator}\label{Ex:Wavelet}
For $S\in\mathbb{N}$, let $\Phi=\big\{\psi_{j,k,e}\, \big|\, (j,k,e)\in\Omega\big\}$ be an $S$-regular wavelet basis of 
$L^2(\Td)$ as described in Section~\ref{Res:Basic_Def}. For $n\in\mathbb{N}$, $n\geq 2^d$, define the subset
\begin{equation}
 \Omega_n:=\big\{(j,k,e)\in\Omega\, \big|\, j=0,\ldots,J-1\big\},\label{Res:Wave_Sets}
\end{equation}
with $J=\big\lfloor\frac{1}{d}\frac{\log n}{\log 2}\big\rfloor$. Note that $2^{-d}\,n\leq \#\Omega_n=2^{Jd}\leq n$ for any 
$n\geq 2^d$.

\begin{proposition}\label{Res:Veri_Wavelet}
An $S$-regular wavelet basis of $L^2$ as in Section~\ref{Res:Basic_Def} with $S>\max\{1,d/2\}$ satisfies 
Assumption~\ref{Res:Gen_Ass} with the sets $\Omega_n$ in~\eqref{Res:Wave_Sets}, a linear polynomial $Q(x)=x$ and parameter $\Gamma=1$.
\end{proposition}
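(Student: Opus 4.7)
The plan is to verify the three parts of Assumption~\ref{Res:Gen_Ass} in turn: $L^2$-normalization of the dictionary, polynomial growth of $\#\Omega_n$, and the Jackson-type inequality~\eqref{Intro:BesovCompat}. The first is immediate: by construction $\psi_{j,k,e}(x) = 2^{jd/2} \psi_e(2^j x - k)$ is the $L^2$-normalized tensor-product rescaling of $\psi_e$, which itself has unit $L^2$-norm by the choice of one-dimensional mother/father wavelets.

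For the cardinality, I would just count indices. At level $j = 0$ there are $2^d$ indices (including the father-wavelet slot $e = (0, \ldots, 0)$), and at each $j \geq 1$ there are $(2^d - 1) \cdot 2^{jd}$. Summing over $j = 0, \ldots, J-1$ gives $\#\Omega_n = 2^{Jd}$. With $J = \lfloor d^{-1} \log_2 n\rfloor$ one has $n / 2^d \leq 2^{Jd} \leq n$, so the polynomial growth condition holds with $Q(x) = x$ and $\Gamma = 1$.

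The key step is~\eqref{Intro:BesovCompat}. For an $S$-regular wavelet basis with $S > d/2$, the standard characterization of Besov spaces in terms of wavelet coefficients (e.g.\ Section~4.3 in~\cite{gine2015mathematical}) applied to $\|g\|_{B^{-d/2}_{\infty,\infty}}$ with the exponent $-d/2 + d(1/2 - 1/\infty) = 0$ in~\eqref{Res:BesovNorm} gives
\begin{equation*}
\|g\|_{\besov} \;\asymp\; \sup_{(j,k,e)\in\Omega} |\langle \psi_{j,k,e}, g\rangle|.
\end{equation*}
I would split this supremum at scale $J$. The contribution from $j < J$ is by definition $\max_{\omega \in \Omega_n} |\langle \phi_\omega, g\rangle|$. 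For $j \geq J$, since $g \in L^\infty$, H\"older's inequality yields
\begin{equation*}
|\langle \psi_{j,k,e}, g\rangle| \;\leq\; \|g\|_{L^\infty}\, \|\psi_{j,k,e}\|_{L^1} \;=\; 2^{-jd/2}\,\|\psi_e\|_{L^1}\,\|g\|_{L^\infty},
\end{equation*}
where $\|\psi_e\|_{L^1}$ is finite by the $S$-regularity (with $S \geq 1$) and compact support of the generating wavelets. Taking the supremum over $j \geq J$ and using $2^{Jd} \geq n/2^d$ gives a bound of order $\|g\|_{L^\infty}/\sqrt{n}$, which completes~\eqref{Intro:BesovCompat}.

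The only delicate point is invoking the correct Besov characterization: since the smoothness index is negative, one really needs the regularity $S > |{-d/2}| = d/2$, and $S > 1$ ensures the wavelet is Lipschitz so that $\psi_e \in L^1$ with uniformly controlled norm. Both are guaranteed by the hypothesis $S > \max\{1, d/2\}$, so there is no genuine obstacle; the proof is essentially a bookkeeping exercise combining the Besov-wavelet equivalence with a crude $L^\infty$-to-$L^1$ duality bound on the tail scales.
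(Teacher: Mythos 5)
Your proposal is correct and follows essentially the same route as the paper's proof: the wavelet characterization of the $\besov$ norm (with exponent $-d/2+d/2=0$), a split of the supremum at scale $J$, and H\"older's inequality giving the tail bound $2^{-Jd/2}\|g\|_{L^{\infty}}\leq 2^{d/2}n^{-1/2}\|g\|_{L^{\infty}}$, together with the count $2^{-d}n\leq\#\Omega_n=2^{Jd}\leq n$. Nothing to add.
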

For the proof, see Section~\ref{SM:VerificationWavelet} in the Supplement. A direct consequence of this proposition and of Theorem~\ref{Res:Main_thm} is that the frame-constrained TV-estimator with the wavelet basis above is nearly minimax optimal for estimating functions in $BV_L$.

\begin{remark}\label{Ex:WaveThresRem}
In dimension $d=1$,~\cite{donoho1998minimax} proved that thresholding of the empirical wavelet coefficients of the observations gives an estimator that attains the minimax optimal convergence rate over $BV$. In contrast, our estimator combines a constraint on the wavelet coefficients with a control on the $BV$-seminorm: this second aspect is crucial in higher dimensions. 
Indeed, in the proof of Theorem~\ref{Res:Main_thm} we bound the risk by the $\besov$-norm of the residuals, which is the maximum of their wavelet coefficients, and the $\bv$-norm of the residuals. The optimality of the estimator~\eqref{Intro:TV_est1} depends crucially on the bound $\|\hat{f}_{\Phi}-f\|_{BV}\lesssim \log n$, which essentially amounts to a bound on the high frequencies of the residuals. But that is precisely the difficulty with wavelet thresholding of $BV$ functions in higher dimensions. To the best of our knowledge, wavelet thresholding has been shown to perform optimally over Besov spaces $B^s_{p,t}$ for $s>d(1/p-1/2)$ only (see e.g.~\cite{DelyonJuditski}). This condition guaranties that the wavelet coefficients of the truth $f$ decay fast enough, which itself allows one to control the high frequencies of the residuals. But that assumption is not satisfies for $BV$ in dimension $d\geq 2$, since we have $B^1_{1,1}\subset BV$, which satisfies $1>d/2$ for $d=1$ only.  
\end{remark}

\subsubsection{$m$-adic multiscale systems}\label{Ex:MultiEst}

We construct the multiscale TV-estimator by choosing $\Phi$ to be a family of smooth functions supported in cubes of different sizes at different locations. Assumption~\ref{ass:multiSystem} makes this precise. For notational simplicity, we sometimes index the set functions in $\Phi$ by the cube $B\subset[0,1)^d$ in which they are supported, and the set of all cubes considered is denoted by $\Omega$.

\begin{assumption}\label{ass:multiSystem}
The system of functions $\Phi=\big\{\phi_{B}\, \big|\, B\in\Omega\big\}$ satisfies the following conditions:
\begin{itemize}
 \item[a)] for fixed $m\in \mathbb{N}$, $m\geq 2$, the set $\Omega$ consists of the intersections with $[0,1)^d$ of all $m$-adic cubes at 
 $m$-adic positions contained in $[0,2)^d$. For each $n\in\mathbb{N}$ with $n\geq m^d$, define 
 $J=\big\lceil\frac{1}{d}\frac{\log n}{\log m}\big\rceil$, $R=J\max\{1,\frac{d}{2}\}$ and
 \begin{align*}
  \mathcal{D}_{R}&:=\big\{\overline{k}=\big(k_1m^{-R},\cdots,k_dm^{-R}\big)\, \big|\, k_i=0,\ldots,m^{R}-1, \ i=1,\ldots,d\big\},
 \\
 \\
 \Omega_n&:=\bigg\{\big(\overline{k}+[0,m^{-j})^d\big)\cap[0,1)^d\, \bigg|\, j=0,\ldots,J-1,\, \overline{k}\in\mathcal{D}_{R} \bigg\};
 \end{align*}
 \item[b)] there is a function $K\in C^{\infty}(\Rd)$ with supp $K\subseteq[0,1)^d$, $|\mathcal{F}[K](\xi)|>0$ in $|\xi|< 2$ and 
 $\|K\|_{L^2(\Rd)}=1$, $\|K\|_{L^{\infty}(\Rd)}\leq 2$ such that all functions $\phi_B\in\Phi$ are given by translation, dilation 
 and rescaling of $K$. More precisely, for each cube $B\in\Omega$ of the form $B=\overline{k}_B+\big[0,|B|^{1/d}\big)^d$, 
 the function $\phi_B\in\Phi$ is given by
\begin{equation*}
 \phi_B(z)=|B|^{-1/2}\, K\big(|B|^{-1/d}(z-\overline{k}_B)\big).
\end{equation*}
\end{itemize}
\end{assumption}

\begin{remark}\label{remAss1}
\quad
\begin{itemize}
\item[a)] An example of a function $K$ satisfying the above assumptions is the ($L^2$-normalized) convolution of the indicator 
function of the cube $\big[\frac{1}{4},\frac{3}{4}\big]^d$ with the standard mollifier. More generally, the Fourier transform of 
the indicator function of the cube $[a,b]\subset[0,1]^d$ satisfies $|\mathcal{F}[1_{[a,b]}](\xi)|>0$ for $|\xi\cdot(b-a)|<1$. 
Taking $K$ to be a smooth approximation to an indicator function, the estimator~\eqref{Intro:TV_est1} is reminiscent of that 
proposed by~\cite{frick2012}.
 \item[b)] For given $m\geq 2$ and $n\in\mathbb{N}$ with $n\geq m^d$, $\#\Omega_n=J\, m^{dR}=J\, m^{d\, J\, \max\{1,d/2\}}$, whence
 \begin{equation*}
 n^{\max\{1,d/2\}}\leq \#\Omega_n\leq n^{\max\{1,d/2\}}\log n.
\end{equation*}
\end{itemize}
\end{remark}

\begin{proposition}\label{Res:veri_multi}
 Let $\Phi=\big\{\phi_B\, \big|\, B\in\Omega\big\}$ satisfy Assumption~\ref{ass:multiSystem}. Then it 
 satisfies Assumption~\ref{Res:Gen_Ass} with polynomial $Q(x)=x^{\max\{1,d/2\}+1}$ and 
 $\Gamma=\max\{1,d/2\}$.
\end{proposition}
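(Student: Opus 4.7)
The plan splits into two parts: verify the polynomial cardinality bound on $\Omega_n$, and establish the Besov--compatibility inequality~\eqref{Intro:BesovCompat}. The cardinality is immediate from Remark~\ref{remAss1}(b): since $n^{\max\{1,d/2\}}\leq \#\Omega_n\leq n^{\max\{1,d/2\}}\log n$, I take $\Gamma=\max\{1,d/2\}$ and, for $n$ large, absorb the logarithm into the polynomial $Q(x)=x^{\max\{1,d/2\}+1}$.

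The heart of the proof is the inequality~\eqref{Intro:BesovCompat}. For $j\geq 0$ let $K^{(j)}(z):=m^{jd/2}K(m^j z)$ denote the $L^2$-normalized dilate of $K$, so that $\phi_B=\tau_{\overline{k}_B}K^{(j)}$ whenever $B$ has side $m^{-j}$ and corner $\overline{k}_B\in\mathcal{D}_R$. Because $|\mathcal{F}[K]|>0$ on $|\xi|<2$, the kernel $K$ is admissible for a local-means (Peetre--Tauberian) characterization of the negative-smoothness Besov norm. My first step is thus to establish
\begin{equation*}
\|g\|_{\besov}\;\leq\;C\,\sup_{j\geq 0,\,x\in\Td}\big|\langle\tau_{x}K^{(j)},g\rangle\big|
\end{equation*}
for all $g\in L^\infty$; equivalently, writing $\widetilde{K}_j$ for the $L^1$-normalized dilate, $\|g\|_{\besov}\leq C\sup_{j\geq 0} m^{-jd/2}\|g*\widetilde{K}_j\|_{L^\infty}$. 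The derivation compares $\widetilde{K}_j*g$ to a Littlewood--Paley / wavelet decomposition of $g$, using the Tauberian condition on $\mathcal{F}[K]$ to invert $\widetilde{K}_j$ scale by scale on dyadic frequency annuli. I expect this Tauberian step to be the main technical obstacle, since $K$ carries no vanishing moments and we must work at the negative smoothness index $-d/2$.

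Granting the local-means bound, what remains is a short computation. Split $\sup_{j,x}$ into $j\geq J$ and $j<J$. For $j\geq J$ the trivial bound
\begin{equation*}
|\langle\tau_x K^{(j)},g\rangle|\leq\|K^{(j)}\|_{L^1}\|g\|_{L^\infty}\leq C\,m^{-jd/2}\|g\|_{L^\infty}\leq C\,n^{-1/2}\|g\|_{L^\infty}
\end{equation*}
(using $m^{Jd}\geq n$) is absorbed into the $n^{-1/2}\|g\|_{L^\infty}$ term of~\eqref{Intro:BesovCompat}. For $j<J$, the map $x\mapsto\langle\tau_x K^{(j)},g\rangle=(g*\check{K}^{(j)})(x)$ is smooth with $L^\infty$ gradient bounded by $\|g\|_{L^\infty}\|\nabla K^{(j)}\|_{L^1}\leq C\,m^{-jd/2+j}\|g\|_{L^\infty}$, so replacing $x$ by its nearest neighbor in $\mathcal{D}_R$ (distance at most $\sqrt{d}\,m^{-R}$) costs at most $C\,m^{-jd/2+j-R}\|g\|_{L^\infty}$. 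A short case analysis using $R=J\max\{1,d/2\}$ (treating $d=1$, $d=2$, and $d\geq 3$ separately, the extremal $j$ being $J-1$, any $j$, and $j=0$, respectively) shows this error is $\leq C\,n^{-1/2}\|g\|_{L^\infty}$ uniformly in $j<J$. Consequently $\sup_{j<J,\,x}|\langle\tau_x K^{(j)},g\rangle|\leq\max_{B\in\Omega_n}|\langle\phi_B,g\rangle|+C\,n^{-1/2}\|g\|_{L^\infty}$, and combining with the local-means equivalence yields~\eqref{Intro:BesovCompat}.
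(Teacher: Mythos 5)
Your proposal is correct and follows essentially the same route as the paper: the cardinality bound from Remark~\ref{remAss1}(b), the local-means characterization $\|g\|_{\besov}\asymp\sup_{j,x}|\langle\phi_{j,x},g\rangle|$, the H\"older bound for scales $j\geq J$, and the replacement of $x$ by its nearest point of $\mathcal{D}_R$ with error controlled by $\|\nabla K\|_{L^1}$ and the choice $R=J\max\{1,d/2\}$, including the same extremal-$j$ case analysis. The Tauberian step you flag as the main obstacle is not carried out in the paper either; it is quoted as a known norm equivalence (an adaptation of Theorem~1 of~\cite{triebel1988characterizations}), and your observation that no vanishing moments are needed at the negative smoothness index $-d/2$ is exactly why the Tauberian condition $|\mathcal{F}[K]|>0$ on $|\xi|<2$ suffices.
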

 
See Section~\ref{App:Mu} of the Supplement for the proof of Proposition~\ref{Res:veri_multi}. We remark that part of the proof of Proposition~\ref{Res:veri_multi} is based on a characterizations of Besov spaces via local means~\citep{triebel1988characterizations}. Again this proposition together with Theorem~\ref{Res:Main_thm} proves near minimax optimality for the multiscale TV-estimator.

\subsubsection{Shearlet and curvelet estimators}\label{Ex:sheCurv}

Another relevant example of the estimator in~\eqref{Intro:TV_est1} in $d\geq 2$ corresponds to the case when $\Phi$ contains a 
frame of shearlets or curvelets. While classical curvelets are defined for $d=2$ (see e.g.~\cite{candes2000curvelets}), there are several extensions to higher dimensions. In order to simplify and unify the analysis, in this paper we will work with the construction of shearlets in Section 3 of~\cite{Labate2013}, and the curvelet frame from Section 7 of~\cite{borup2007}. The reason for working with these constructions is that they are defined in all dimensions by a partition of frequency space, thus simplifying the notation. We nevertheless remark that the analysis presented here can be easily adapted to other curvelet and shearlet constructions.

Let $\{\overline{\varphi}_{j,\tilde{\theta}}\, \big|\, (j,\tilde{\theta})\in\Xi\}$ denote either the tight shearlet frame or the tight curvelet frame mentioned above. 
Then $\{\varphi_{j,\tilde{\theta}}\, \big|\, (j,\tilde{\theta})\in\Theta\}$ consists of the normalized periodizations of the elements 
$\overline{\varphi}_{j,\tilde{\theta}}$ that have a nonzero overlap with the indicator function of the unit cube, 
i.e., $\int_{[0,1]^d}\overline{\varphi}_{j,\tilde{\theta}}(z)\, dz\neq 0$. 
For simplicity of the notation, we index the elements by 
$(j,\tilde{\theta})\in\Theta\subset\mathbb{N}_0\times\tilde{\Theta}$, 
where $j\geq 0$ plays the role of a scale index, and $\tilde{\theta}$ indexes the position and orientation of the frame 
elements (see the references above for the precise construction in each case). 
In the rest of this section we will consider frames of $L^2(\Td)$ that contain the set 
$\{\varphi_{j,\tilde{\theta}}\, \big|\, (j,\tilde{\theta})\in\Theta\}$.

\begin{assumption}\label{Res:Ass_Curv}
Let $\big\{\psi_{j,k,e}\, \big|\, (j,k,e)\in\Theta^{W}\big\}$ denote an $S$-regular wavelet basis of $L^2(\Td)$ with 
$S>\max\{1,d/2\}$, and let $\big\{\varphi_{j,\tilde{\theta}}\, \big|\, (j,\tilde{\theta})\in\Theta\big\}$ denote 
the set of functions constructed above. Then define 
$\Phi:=\big\{\psi_{j,k,e}\, \big|\, (j,k,e)\in\Theta^{W}\big\}\cup\big\{\varphi_{j,\tilde{\theta}}\, \big|\, (j,\tilde{\theta})\in\Theta\big\}$. 
Further, for $n\in\mathbb{N}$ define $J=\big\lfloor\frac{1}{d}\frac{\log n}{\log 2}\big\rfloor$ 
and let $\Phi_n:=\big\{\psi_{j,k,e}\, \big|\, (j,k,e)\in\Theta^W_n\big\}\cup\big\{\varphi_{j,\tilde{\theta}}\, \big|\, (j,\tilde{\theta})\in\Theta_n\big\}$, where
\begin{align*}
&\Theta_n^{W}:=\big\{(j,k,e)\in\Theta^W\, \big|\, j=0,\ldots,J-1\big\},
\\
&\Theta_n:=\big\{(j,\tilde{\theta})\in\Theta\, \big|\, j=0,\ldots,\widetilde{J}-1\big\},
\end{align*}
where $\widetilde{J}\in\mathbb{N}$ is the largest possible natural number such that $2^{d(J-1)}\leq\#\Theta_n\leq 2^{dJ}$. 
For consistency with the notation in the previous sections, we define the joint index set $\Omega_n:=\Theta_n^W\cup \Theta_n$.
\end{assumption}

\begin{remark}\label{Curv:NeedMixed}
\quad
\begin{itemize}
 \item[a)] The assumption that $\Phi$ contains a wavelet basis as well as a directional frame is crucial. Indeed, the wavelet 
 basis allows us to 
 upper-bound the Besov norm $\besov$ by the maximum over the frame coefficients with respect to $\Phi$, which we need in order to establish Assumption~\ref{Res:Gen_Ass}. Alternatively, if $\Phi$ consisted on a curvelet frame only, the embeddings in 
 Lemma 9 in~\cite{borup2007} together with classical embeddings of Besov spaces (see Remark 4 of Section 3.5.4 in~\cite{SchmeisserTriebel}) would give the bound
\begin{equation*}
 \|g\|_{B^{-d/2}_{\infty,\infty}(\mathbb{R}^d)}\leq C\max_{(j,\tilde{\theta})\in\Theta}2^{j\delta}|\langle \varphi_{j,\tilde{\theta}},g\rangle|
\end{equation*}
for smooth enough functions $g$, and a $\delta>0$ that depends on the dimension. 
Accordingly, the third step in the sketch of the proof of Theorem~\ref{Res:Main_thm} would deteriorate to
\begin{equation*}
 \|\hat{f}_{\Phi}-f\|_{\Besov}\leq C\frac{n^{\delta'}}{\sqrt{n}}\textup{Polylog}_{d,\delta'}(n)
\end{equation*}
for some $\delta'>0$, and a polylogarithmic factor that diverges as $\delta'\rightarrow 0$. This results in a polynomially suboptimal 
rate of convergence. We remark that this limitation 
arises from the suboptimal embeddings between Besov spaces and decomposition space associated with the curvelet frame. The 
situation for the shearlet frame is analogous, as its associated decomposition space equals that of the curvelet frame 
(see Proposition 4.4 in~\cite{Labate2013}).
\item[b)] We make the assumption that $\#\Theta_n\leq 2^{dJ}$ for any $n\in\mathbb{N}$ and $J=\big\lfloor\frac{1}{d}\frac{\log n}{\log 2}\big\rfloor$ 
in order to simplify subsequent computations. The assumption is justified, since the cardinality of $\Theta_n$ behaves indeed 
like $O(2^{dJ})$. In fact, the number of curvelet (or shearlet) frame elements at scale $2^{-j}$ that have a nonzero overlap with the unit cube behaves 
as $2^{dj}$, since there are $O(2^{j+\frac{d-1}{2}j})$ positions and $O(2^{\frac{d-1}{2}j})$ orientations. We refer to Section 8.2 
in~\cite{candes2004new} and~\cite{borup2007} for the details. The claim for the shearlet frame follows from that of 
the curvelet frame by the comparison in Section 4.4 in~\cite{Labate2013}.
\end{itemize}
\end{remark}

The constructions of tight curvelet frames in~\cite{borup2007} and of shearlet frames in~\cite{Labate2013} yield smooth frame 
elements that are exponentially decaying in space. 
We use this to show that the family $\Phi$ satisfies Assumption~\ref{Res:Gen_Ass}.

\begin{proposition}\label{Res:Veri_Curv}
Let $\Phi$ satisfy Assumption~\ref{Res:Ass_Curv} with either the shearlet or the curvelet frame. Then it satisfies 
Assumption~\ref{Res:Gen_Ass} with $Q(x)=2x$ and $\Gamma=1$.
\end{proposition}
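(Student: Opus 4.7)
The plan is to verify the two parts of Assumption~\ref{Res:Gen_Ass} separately, and in fact to carry out the entire argument using only the wavelet component of $\Phi$; the curvelet/shearlet atoms enter $\Omega_n$ but are not needed to establish the inequality~\eqref{Intro:BesovCompat}. For the cardinality bound, the wavelet indices $\Theta_n^W$ at levels $j=0,\ldots,J-1$ with $J=\lfloor\log n/(d\log 2)\rfloor$ satisfy $\#\Theta_n^W\asymp 2^{Jd}$, which lies between $n/2^d$ and $n$. By the construction of $\widetilde{J}$, one also has $\#\Theta_n\leq 2^{dJ}\leq n$. Therefore $c\,n\leq \#\Omega_n\leq 2n$, which yields $Q(x)=2x$ and $\Gamma=1$.

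The substantive step is the inequality~\eqref{Intro:BesovCompat}. The $S$-regularity of the wavelet basis with $S>\max\{1,d/2\}$ makes the wavelet characterization~\eqref{Res:BesovNorm} applicable at the level $s=-d/2$, $p=q=\infty$: since the scale weight collapses to $2^{j(-d/2+d/2)}=1$, one obtains
\begin{equation*}
\|g\|_{\besov}\asymp \sup_{(j,k,e)\in\Theta^W}|\langle\psi_{j,k,e},g\rangle|.
\end{equation*}
I would then split the supremum at the threshold scale $J$. For $j<J$, the triples $(j,k,e)$ lie in $\Theta_n^W\subseteq\Omega_n$, so the partial supremum is dominated by $\max_{\omega\in\Omega_n}|\langle\phi_{\omega},g\rangle|$. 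For $j\geq J$, I apply H\"older's inequality together with the elementary identity $\|\psi_{j,k,e}\|_{L^1}=2^{-jd/2}\|\psi_e\|_{L^1}$ (which holds since $\|\psi_e\|_{L^1}$ is bounded uniformly in $e$ by the decay/compact-support hypotheses in Section~\ref{Res:Basic_Def}) to obtain
\begin{equation*}
\sup_{j\geq J,k,e}|\langle\psi_{j,k,e},g\rangle|\leq C\,2^{-Jd/2}\|g\|_{L^\infty}\leq C\,n^{-1/2}\|g\|_{L^\infty}.
\end{equation*}
Summing the two contributions produces~\eqref{Intro:BesovCompat}, completing the verification of Assumption~\ref{Res:Gen_Ass}.

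The main technical point, and essentially the only nontrivial one, is the invocation of the wavelet characterization of $\besov$ at a negative smoothness index; for that one needs $S>d/2$, which is precisely what the hypothesis of Assumption~\ref{Res:Ass_Curv} provides. Everything else is bookkeeping: the curvelet or shearlet atoms, whose inclusion in $\Phi$ is crucial for the finer directional estimation properties highlighted in Remark~\ref{Curv:NeedMixed}, here only contribute additional terms to the maximum on the right-hand side of~\eqref{Intro:BesovCompat} (which cannot hurt the inequality) and at most double the cardinality of $\Omega_n$ relative to the wavelet part alone.
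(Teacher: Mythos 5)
Your proposal is correct and follows essentially the same route as the paper: the paper likewise establishes the inequality~\eqref{Intro:BesovCompat} purely from the wavelet component (citing the bound already derived in the proof of Proposition~\ref{Res:Veri_Wavelet}, i.e.~the split at scale $J$ plus the H\"older estimate $\sup_{j\geq J}|\langle\psi_{j,k,e},g\rangle|\leq C\,2^{-Jd/2}\|g\|_{L^\infty}$), then observes that enlarging the maximum to include the curvelet/shearlet coefficients only strengthens the right-hand side, and finally counts $\#(\Theta_n^W\cup\Theta_n)\leq 2\cdot 2^{dJ}$ to get $Q(x)=2x$ and $\Gamma=1$. Your re-derivation of the wavelet step inline, rather than by reference, is the only cosmetic difference.
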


The proof of Proposition~\ref{Res:Veri_Curv} is given in Section~\ref{VerificationCurve} of the Supplement. As a consequence, we conclude from Theorem~\ref{Res:Main_thm} that the curvelet TV-estimator is nearly minimax optimal for estimating $BV_L$ functions.

\subsection*{}

We close this section presenting some dictionaries $\Phi$ that do not satisfy Assumption~\ref{Res:Gen_Ass}, where hence Theorem~\ref{Res:Main_thm} does not apply.
\begin{itemize}
\item[a)] Wavelet systems of low smoothness do not satisfy Assumption~\ref{Res:Gen_Ass}. Our result relies crucially on the fact that the Besov spaces $\besov$ and $B^{1}_{1,1}$ can be characterized by the size of wavelet coefficients. For that, wavelet bases with $S-1$ vanishing moments and smoothness $S$ are needed with $S>\max\{1,d/2\}$ (see Section 4.3 in~\cite{gine2015mathematical}). 
\item[b)] For the multiscale TV-estimator in Section~\ref{Ex:MultiEst} we considered a dictionary $\Phi$ consisting on \textit{smoothed} indicator functions of cubes in $[0,1]^d$. The smoothing part is essential, since we need enough regularity in order to bound the Besov $\besov$-norm in terms of this dictionary, which is done by the characterization of Besov spaces by local means (see Section~\ref{App:Mu} of the Supplement).
\item[c)] As argued in part a) of Remark~\ref{Curv:NeedMixed}, a dictionary consisting solely of a curvelet frame or a shearlet frame does not suffice, since the decomposition spaces they generate (in the sense of~\cite{borup2007}) do not match Besov spaces exactly, whence Assumption~\ref{Res:Gen_Ass} does not hold.
\end{itemize}

\section{Proof of the main theorems}\label{Sec:Main_Proof}

\subsection{Proof of part a) of Theorem~\ref{Res:Main_thm}}\label{App:PfThma}

We show the following easy fact as a preparation for the proof of part a) of Theorem~\ref{Res:Main_thm}. 

\begin{proposition}\label{App:aux_Main_thm}
 Let $\Phi$ satisfy Assumption~\ref{Res:Gen_Ass} and, for $n\in\mathbb{N}$, let $\hat{f}_{\Phi}$ be the estimator defined 
 in~\eqref{Intro:TV_est1} with $\gamma_n$ given by~\eqref{Res:UnivGamma}. 
 Then conditionally on the event $A_n$ in~\eqref{Sketch:GoodEvent} we have
 \begin{align*}
  &(i)\ \ \|\hat{f}_{\Phi}-f\|_{\Besov}\leq C\, \gamma_n+C\frac{\|f\|_{L^{\infty}(\Td)}+\log n}{\sqrt{n}},
  \\
  &(ii)\ \ \|\hat{f}_{\Phi}-f\|_{\BV}\leq \|f\|_{L^{\infty}(\Td)}+2|f|_{\BV}+\log n,
 \end{align*}
for any $f\in\BV\cap L^{\infty}(\Td)$, and a constant $C>0$ independent of $n$, $f$ and $\hat{f}_{\Phi}$.
\end{proposition}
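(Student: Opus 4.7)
\begin{sketch}
The two bounds follow from a straightforward decomposition together with the minimizing property of $\hat{f}_{\Phi}$, once one uses that on the event $A_n$ both the truth $f$ is feasible for the constrained program and the stochastic noise term is controlled of order $\gamma_n$.

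For part (i), the plan is to apply the Jackson-type inequality~\eqref{Intro:BesovCompat} to $g=\hat{f}_{\Phi}-f$, which gives
\begin{equation*}
 \|\hat{f}_{\Phi}-f\|_{\besov}\leq C\max_{\omega\in\Omega_n}\big|\langle\phi_{\omega},\hat{f}_{\Phi}-f\rangle\big|+C\frac{\|\hat{f}_{\Phi}-f\|_{L^{\infty}}}{\sqrt{n}}.
\end{equation*}
The inner-product term is then split using $Y_{\omega}=\langle\phi_{\omega},f\rangle+\frac{\sigma}{\sqrt{n}}\int\phi_{\omega}\,dW$ as the sum of $\langle\phi_{\omega},\hat{f}_{\Phi}\rangle-Y_{\omega}$, which is bounded by $\gamma_n$ by the feasibility constraint in~\eqref{Intro:TV_est1}, and the stochastic piece $\frac{\sigma}{\sqrt{n}}\int\phi_{\omega}\,dW$, which on $A_n$ is bounded by a constant multiple of $\gamma_n$ by construction of that event (cf.~Remark~\ref{Res:RemNoise}(b) and the universal choice~\eqref{Res:UnivGamma}). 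The $L^{\infty}$ term is handled by the triangle inequality together with $\|\hat{f}_{\Phi}\|_{L^{\infty}}\leq\log n$, which holds since $\hat{f}_{\Phi}\in X_n$ (see~\eqref{Res:Aux_Set}), giving $\|\hat{f}_{\Phi}-f\|_{L^{\infty}}\leq\|f\|_{L^{\infty}}+\log n$. Combining yields~(i).

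For part (ii), I would decompose $\|\hat{f}_{\Phi}-f\|_{\bv}=\|\hat{f}_{\Phi}-f\|_{L^1}+|\hat{f}_{\Phi}-f|_{\bv}$. Since the torus has unit volume, $\|\hat{f}_{\Phi}-f\|_{L^1}\leq\|\hat{f}_{\Phi}\|_{L^{\infty}}+\|f\|_{L^{\infty}}\leq\log n+\|f\|_{L^{\infty}}$. For the seminorm, the triangle inequality gives $|\hat{f}_{\Phi}-f|_{\bv}\leq|\hat{f}_{\Phi}|_{\bv}+|f|_{\bv}$, and here the key step is to use that on $A_n$ the truth $f$ itself satisfies the frame constraint $\max_{\omega}|\langle\phi_{\omega},f\rangle-Y_{\omega}|=\frac{\sigma}{\sqrt{n}}\max_{\omega}|\int\phi_{\omega}\,dW|\leq\gamma_n$ (again by definition of $A_n$ and the choice of $\gamma_n$), and moreover $f\in X_n$ because the assumption $n\geq e^L$ forces $\|f\|_{L^{\infty}}\leq L\leq\log n$. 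Hence $f$ is a feasible competitor in the program~\eqref{Intro:TV_est1}, so by minimality $|\hat{f}_{\Phi}|_{\bv}\leq|f|_{\bv}$, yielding $|\hat{f}_{\Phi}-f|_{\bv}\leq 2|f|_{\bv}$ and thereby~(ii).

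The only real subtlety is verifying that $f$ is feasible on $A_n$, which requires matching the definition of $A_n$ and the constant in $\gamma_n$ (i.e.~the role of $\kappa>1$ in~\eqref{Res:UnivGamma}) to the subgaussian tail bound of Remark~\ref{Res:RemNoise}(b); this is essentially a bookkeeping exercise and should be isolated as a short preliminary observation before running the above two computations.
\end{sketch}
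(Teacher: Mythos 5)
Your proposal is correct and follows essentially the same route as the paper: part (i) via the Jackson-type inequality~\eqref{Intro:BesovCompat} applied to $\hat{f}_{\Phi}-f$ with the constraint and the event $A_n$ controlling the two pieces of the frame-coefficient term, and part (ii) via the $L^1$--$L^\infty$ bound plus the minimality of $|\hat{f}_{\Phi}|_{\bv}$ against the feasible competitor $f$. Your extra remark that feasibility of $f$ also requires $f\in X_n$ (i.e.~$\|f\|_{L^{\infty}}\leq\log n$, guaranteed by $n\geq e^L$) is a point the paper's proof passes over silently, and is worth keeping.
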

\begin{proof}
 For part $(i)$, apply Assumption~\ref{Res:Gen_Ass} to $g=\hat{f}_{\Phi}-f$, which yields
 \begin{equation*}
     \|\hat{f}_{\Phi}-f\|_{\Besov}\leq C\max_{\omega\in\Omega_n}\big|\langle \phi_{\omega},\hat{f}_{\Phi}-f\rangle\big|+C\frac{\|\hat{f}_{\Phi}-f\|_{L^{\infty}(\Td)}}{\sqrt{n}}.
 \end{equation*}
The numerator in the second term can be bounded by $\|f\|_{L^{\infty}(\Td)}+\log n$ by construction of $\hat{f}_{\Phi}$, while 
the first term can be bounded as
\begin{align*}
 \max_{\omega\in\Omega_n}\big|\langle \phi_{\omega},\hat{f}_{\Phi}-f\rangle\big|&\leq \underbrace{\max_{\omega\in\Omega_n}\big|\langle \phi_{\omega},\hat{f}_{\Phi}\rangle-Y_{\omega}\big|}_{\leq\gamma_n}+\max_{\omega\in\Omega_n}\big|\langle \phi_{\omega},f\rangle- Y_{\omega}\big|
 \\
 &\leq \gamma_n+\max_{\omega\in\Omega_n}\frac{\sigma}{\sqrt{n}}\bigg|\int_{\Td}\phi_{\omega}(x)\, dW(x)\bigg|\leq 2\gamma_n
\end{align*}
conditionally on $A_n$, where in the second inequality we used the definition of $\hat{f}_{\Phi}$. This completes the proof of $(i)$. 
For $(ii)$, we have
\begin{equation*}
 \|\hat{f}_{\Phi}-f\|_{\BV}\leq \|\hat{f}_{\Phi}-f\|_{L^{1}(\Td)}+|\hat{f}_{\Phi}-f|_{\BV}\leq \|\hat{f}_{\Phi}-f\|_{L^{\infty}(\Td)}+|\hat{f}_{\Phi}-f|_{\BV}.
\end{equation*}
The first term is bounded by $\|f\|_{L^{\infty}(\Td)}+\log n$, while the second is bounded by $|\hat{f}_{\Phi}|_{\BV}+|f|_{\BV}$. 
Finally, conditionally on $A_n$ we have $|\hat{f}_{\Phi}|_{\BV}\leq |f|_{\BV}$. This is so because $\hat{f}_{\Phi}$ is defined 
as the minimizer of the bounded variation seminorm among the functions satisfying 
$\underset{\omega\in\Omega_n}{\max}|\langle \phi_{\omega},g\rangle-Y_{\omega}|\leq\gamma_n$. Note that, conditionally on $A_n$, 
the function $f$ satisfies this constraint, and hence $f$ is an admissible function for the minimization problem defining 
$\hat{f}_{\Phi}$, whence $|\hat{f}_{\Phi}|_{\BV}\leq|f|_{\BV}$. This completes the proof.
\end{proof}

The proof of Theorem~\ref{Res:Main_thm} relies heavily on results from the theory of function spaces. In particular, we use the following interpolation inequalities.
\begin{proposition}[Interpolation inequalities]\label{InterpInequality}
\quad
 \begin{itemize}
  \item[a)] For $d=1$ and $q\in[1,3]$, there is a constant $C>0$ such that
  \begin{equation*}
   \|g\|_{L^q}\leq C\, (\log n)\, \|g\|_{B^{-1/2}_{\infty,\infty}}^{2/3}\|g\|_{BV}^{1/3}+C\, n^{-1}\, \|g\|_{L^{\infty}}^{2/3}\|g\|_{BV}^{1/3}
  \end{equation*}
holds for any $n\in\mathbb{N}$ and any $g\in L^{\infty}\cap BV(\mathbb{T}^d)$.
\item[b)] Let $d\geq 2$ and $q\in\big[1,\frac{d+2}{d}\big]$. Then there is a constant $C>0$ such that
 \begin{equation*}
  \|g\|_{L^q}\leq C \|g\|_{B^{-d/2}_{\infty,\infty}}^{\frac{2}{d+2}}\|g\|_{BV}^{\frac{d}{d+2}}
 \end{equation*}
holds for any $g\in B^{-d/2}_{\infty,\infty}\cap BV(\mathbb{T}^d)$.
 \end{itemize}
\end{proposition}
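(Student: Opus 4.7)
The plan is to exploit the wavelet characterization of all three norms appearing in these inequalities, together with the classical embedding $BV(\mathbb{T}^d)\hookrightarrow B^1_{1,\infty}(\mathbb{T}^d)$. Writing $g=\sum_{j\geq 0} g_j$ as a Littlewood--Paley decomposition with $g_j=\sum_{k,e} c_{j,k,e}\psi_{j,k,e}$, set $A:=\|g\|_{B^{-d/2}_{\infty,\infty}}\asymp \sup_{j,k,e}|c_{j,k,e}|$ and $B:=\|g\|_{BV}$, which dominates $\|g\|_{B^1_{1,\infty}}\asymp \sup_j 2^{j(1-d/2)}\sum_{k,e}|c_{j,k,e}|$. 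From the bounded overlap of the supports of the wavelets at each fixed scale, one obtains the per-scale estimates
\begin{equation*}
\|g_j\|_{L^1}\leq C\,2^{-j}B,\qquad \|g_j\|_{L^\infty}\leq C\,2^{jd/2}A.
\end{equation*}

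For part (b), I would first reduce to the endpoint $q^\ast=(d+2)/d$, which satisfies $q^\ast\in(1,2]$ because $d\geq 2$; this reduction uses the monotonicity of $L^q$-norms on the torus of finite measure. For this $q^\ast$ I would invoke real interpolation between $B^{-d/2}_{\infty,\infty}$ and $B^1_{1,\infty}$ at parameter $\theta=d/(d+2)$: the smoothness index is $(1-\theta)(-d/2)+\theta\cdot 1=0$ and the integrability index is $1/p=\theta$, so the $K$-functional method yields the identity $(B^{-d/2}_{\infty,\infty},B^1_{1,\infty})_{\theta,q^\ast}=B^0_{q^\ast,q^\ast}$ and the inequality
\begin{equation*}
\|g\|_{B^0_{q^\ast,q^\ast}}\leq C\,\|g\|_{B^{-d/2}_{\infty,\infty}}^{2/(d+2)}\|g\|_{B^1_{1,\infty}}^{d/(d+2)}.
\end{equation*}
Since $q^\ast\leq 2$, the Triebel--Lizorkin chain $B^0_{q^\ast,q^\ast}\hookrightarrow F^0_{q^\ast,2}=L^{q^\ast}$ (Littlewood--Paley theorem) combined with $\|g\|_{B^1_{1,\infty}}\leq C\|g\|_{BV}$ closes the argument.

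For part (a), in $d=1$ the endpoint exponent is $q^\ast=3>2$, and now the Besov space $B^0_{3,3}$ produced by interpolation is strictly larger than $L^3$, so the clean approach of part (b) fails---this is what forces the logarithmic factor. I would instead perform a hard dyadic truncation at a scale $J$ chosen so that $2^J\asymp n^3$, hence $J\asymp\log n$. For $j<J$ the interpolated per-scale estimate $\|g_j\|_{L^3}\leq \|g_j\|_{L^1}^{1/3}\|g_j\|_{L^\infty}^{2/3}\leq CA^{2/3}B^{1/3}$ is \emph{scale-independent} at $q=3$, so summing gives $C(\log n)A^{2/3}B^{1/3}$. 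For $j\geq J$ I would replace $\|g_j\|_{L^\infty}$ by $C\|g\|_{L^\infty}$, which is admissible because smooth wavelet band-projections are bounded on $L^\infty$, to obtain $\|g_j\|_{L^3}\leq C\,2^{-j/3}\|g\|_{L^\infty}^{2/3}B^{1/3}$; the geometric tail then sums to $C\,2^{-J/3}\|g\|_{L^\infty}^{2/3}B^{1/3}=C\,n^{-1}\|g\|_{L^\infty}^{2/3}B^{1/3}$. The case $q\in[1,3)$ reduces to $q=3$ via $\|g\|_{L^q}\leq\|g\|_{L^3}$ on $\mathbb{T}$.

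The main obstacle is the borderline nature of the exponent $q^\ast$. In part (b) real interpolation supplies the correct summability index automatically, but in part (a) the analogous identification is unavailable because $q^\ast=3>2$; one is therefore forced to truncate, and the correct calibration of the truncation scale (so that the $L^\infty$-tail contributes exactly $n^{-1}$ while the low-frequency sum contributes exactly $\log n$) is the delicate step that produces the two terms in the stated inequality.
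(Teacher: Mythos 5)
Your treatment of part a) is correct and captures the substance of the paper's own argument (Propositions~\ref{ParticIntd=1} and~\ref{new_d=1}): in $d=1$ the per-scale bounds $\|g_j\|_{L^1}\leq C2^{-j}\|g\|_{BV}$ and $\|g_j\|_{L^\infty}\leq C2^{j/2}\sup_{j,k,e}|c_{j,k,e}|$ interpolate at $q=3$ into a scale-independent estimate, and truncation at $2^{J}\asymp n^{3}$ produces exactly the two stated terms. Your version is in fact slightly more self-contained, since for the low frequencies you bypass the $B^{0}_{3,3}$-norm and Proposition~\ref{Res:InterProp} entirely.

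Part b), however, has a genuine gap at the interpolation step. First, a multiplicative inequality $\|g\|_{Z}\leq C\|g\|_{X_0}^{1-\theta}\|g\|_{X_1}^{\theta}$ is equivalent to the embedding $(X_0,X_1)_{\theta,\infty}\hookrightarrow Z$; the identity $(X_0,X_1)_{\theta,q^\ast}=B^{0}_{q^\ast,q^\ast}$ you invoke, even if available, does not deliver it, because $(X_0,X_1)_{\theta,\infty}$ is strictly larger than $(X_0,X_1)_{\theta,q^\ast}$ for $q^\ast<\infty$. Second, and more fundamentally, the inequality you actually use,
\begin{equation*}
\|g\|_{B^0_{q^\ast,q^\ast}}\leq C\,\|g\|_{B^{-d/2}_{\infty,\infty}}^{2/(d+2)}\|g\|_{B^1_{1,\infty}}^{d/(d+2)},
\end{equation*}
is false. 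Take $g=\sum_{j=1}^{N}\sum_{(k,e)\in\Lambda_j}\psi_{j,k,e}$ with $\Lambda_j$ a set of $\lceil 2^{j(d/2-1)}\rceil$ disjointly supported wavelets at scale $j$. Then $\|g\|_{B^{-d/2}_{\infty,\infty}}\asymp 1$ and $\|g\|_{B^1_{1,\infty}}\asymp\sup_j 2^{j(1-d/2)}\#\Lambda_j\asymp 1$, yet $\|g\|_{B^0_{q^\ast,q^\ast}}^{q^\ast}=\sum_{j\leq N}2^{j(1-d/2)}\#\Lambda_j\asymp N$, so the left-hand side grows like $N^{1/q^\ast}$ while your right-hand side stays bounded. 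The obstruction is precisely the point of~\cite{cohen2003harmonic}: the $B^1_{1,\infty}$-norm only controls the $\ell^1$-mass of the wavelet coefficients \emph{scale by scale}, whereas the log-free interpolation inequality requires the renormalized coefficients of a $BV$ function to lie in a weak weighted $\ell^1$ space \emph{globally across all scales} -- a strictly stronger property that does not follow from $BV\hookrightarrow B^1_{1,\infty}$ (the lacunary sum above has $\|g\|_{BV}\asymp N$, which is why it does not contradict the proposition itself). This is why the paper does not interpolate against $B^1_{1,\infty}$ but instead imports Proposition~\ref{Res:InterProp}, the periodic version of the Cohen--Dahmen--Daubechies--DeVore theorem, whose proof rests on that weak-$\ell^1$ analysis. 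Truncating in part b) as you do in part a) would only recover the estimate with an extra $(\log n)^{d/(d+2)}$ factor, which is weaker than the stated inequality.
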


 We give the proof of Proposition~\ref{InterpInequality} in Section~\ref{SM:InterpInequ} of the Supplement. It is based on the generalization to periodic functions of a result by~\cite{cohen2003harmonic}, which proves interpolation results between $BV$ and Besov spaces (see Section~\ref{SM:InterpInequ} of the Supplement for the details). The different results in $d=1$ and $d\geq 2$ in Proposition~\ref{InterpInequality} are due to the nature of certain embeddings between Besov and $L^q$ spaces. In a nutshell, interpolation theory allows us to bound the Besov $B^0_{q^*,q^*}$-risk for $q^*=1+2/d$ by the desired rate. In order to translate this bound to the $L^{q^*}$ risk, we use the embedding $B^0_{q,q}\hookrightarrow L^q$, which holds for $q\in(1,2]$ only. This is satisfied for $d\geq 2$, since then $q^*\leq 2$. On the other hand, for $d=1$ we have $q^*=3$, and an alternative strategy has to be applied. We refer to Section~\ref{SM:InterpInequ} of the Supplement for more details.

\begin{proof}[Proof of part a) of Theorem~\ref{Res:Main_thm}]
We prove the claim of part a) Theorem~\ref{Res:Main_thm} conditionally on the event 
 \begin{equation}
A_n:=\bigg\{\max_{\omega\in\Omega_n}\bigg|\int_{\mathbb{T}^d}\phi_{\omega}(x)\, dW(x)\bigg|\leq \frac{\sqrt{n}}{\sigma}\, \gamma_n\bigg\}.\label{Sketch:GoodEvent}
 \end{equation}
 By the choice of $\gamma_n$ in~\eqref{Res:UnivGamma} and part b) of Remark~\ref{Res:RemNoise}, we have
 \begin{equation*}
  \mathbb{P}\big(A_n\big)\geq 1-\big(\#\Omega_n\big)^{1-\kappa^2},
 \end{equation*}
which tends to one as $n\rightarrow\infty$.
\\
Consider first the case $q\leq 1+2/d$. For $d\geq 2$, part b) of Proposition~\ref{InterpInequality} applies and gives the interpolation 
inequality
 \begin{equation*}
   \|\hat{f}_{\Phi}-f\|_{L^q(\mathbb{T}^d)}\leq C\|\hat{f}_{\Phi}-f\|_{\Besov}^{\frac{2}{d+2}}\|\hat{f}_{\Phi}-f\|_{\BV}^{\frac{d}{d+2}}.
 \end{equation*}
Conditionally on $A_n$, Proposition~\ref{App:aux_Main_thm} gives us bounds for the terms in the 
right-hand side, which inserted give 
\begin{align*}
 \|\hat{f}_{\Phi}-f\|_{L^q(\mathbb{T}^d)}&\leq C \bigg(\gamma_n+C\frac{\|f\|_{L^{\infty}(\Td)}+\log n}{\sqrt{n}}\bigg)^{\frac{2}{d+2}}\big(\|f\|_{L^{\infty}(\Td)}+2|f|_{\BV}+\log n\big)^{\frac{d}{d+2}}
 \\
 &\leq Cn^{-\frac{1}{d+2}}\big(\sqrt{\log \#\Omega_n}+L+\log n\big)^{\frac{2}{d+2}}\big(L+\log n\big)^{\frac{d}{d+2}}
 \\
 &\leq C\, n^{-\frac{1}{d+2}}\, \log n
\end{align*}
using that $f\in BV_L$. Since $\#\Omega_n\leq Q(n)$ grows at most polynomially in $n$, the claim follows.

For the case $d=1$, we use part a) of Proposition~\ref{InterpInequality}, which yields 
\begin{equation*}
\|g\|_{L^q}\leq C\, (\log n)\, \|g\|_{B^{-1/2}_{\infty,\infty}}^{2/3}\|g\|_{BV}^{1/3}+C\, n^{-1}\, \|g\|_{L^{\infty}}^{2/3}\|g\|_{BV}^{1/3}
\end{equation*}
for $g=\hat{f}_{\Phi}-f$ and $q\in[1,3]$. Proposition~\ref{App:aux_Main_thm} now implies that, conditionally on $A_n$, we have
\begin{equation*}
\|\hat{f}_{\Phi}-f\|_{L^q}\leq C\, n^{-1/3}\, (\log n)^2 +C\, n^{-1}\, \log n,
\end{equation*}
which yields the claim.

We have proved the claim for the $L^q$-risk with $q\leq 1+2/d$. For larger $q$, we use H\"older's inequality between the $L^{1+2/d}$ and the $L^{\infty}$-risk, which gives the desired bound.
\end{proof}

\begin{proof}[Proof of part b) of Theorem~\ref{Res:Main_thm}]
 Using the convergence conditionally on $A_n$ proved in part a), we can bound the expected risk as
 \begin{align}
  \mathbb{E}[\|\hat{f}_{\Phi}-f\|_{L^q(\mathbb{T}^d)}]&=\mathbb{E}[\|\hat{f}_{\Phi}-f\|_{L^q(\mathbb{T}^d)}\, 1_{A_n}] +\mathbb{E}[\|\hat{f}_{\Phi}-f\|_{L^q(\mathbb{T}^d)}\, 1_{A_n^c}]\nonumber
 \\
 &\leq C\, r_n\, \mathbb{P}\big(A_n\big)+\mathbb{E}[\|\hat{f}_{\Phi}-f\|_{L^q(\mathbb{T}^d)}\, 1_{A_n^c}]\nonumber
 \\
 &\leq C\, r_n+\mathbb{E}[\|\hat{f}_{\Phi}-f\|_{L^q(\mathbb{T}^d)}\, 1_{A_n^c}],\label{pf:bound1Mul}
 \end{align}
where $r_n= n^{-\min\{\frac{1}{d+2},\frac{1}{dq}\}}\, (\log n)^{3-\min\{d,2\}}$. The rest of the proof consists in showing that the second term behaves as 
$o(n^{-1/2})$ for $\kappa^2>1+\frac{1}{(d+2)\Gamma}$. By assumption we have the bounds $\|f\|_{L^{\infty}}\leq L$ and $\|\hat{f}_{\Phi}\|_{L^{\infty}}\leq \log n$, so we can bound the second term as
\begin{align*}
\mathbb{E}[\|\hat{f}_{\Phi}-f\|_{L^q(\mathbb{T}^d)}\, 1_{A_n^c}] &\leq \mathbb{E}[\|\hat{f}_{\Phi}-f\|_{L^{\infty}(\mathbb{T}^d)}\, 1_{A_n^c}] \leq (L+\log n)\mathbb{P}\big(A_n^c\big).
\end{align*}
By part b) of Remark~\ref{Res:RemNoise} we have $\mathbb{P}(A_n^c)\leq (\#\Omega)^{1-\kappa^2}$, and inserting this back in~\eqref{pf:bound1Mul} yields
\begin{equation*}
\mathbb{E}[\|\hat{f}_{\Phi}-f\|_{L^q(\mathbb{T}^d)}]\leq C\,  n^{-\min\{\frac{1}{d+2},\frac{1}{dq}\}}\, (\log n)^{3-\min\{d,2\}}+C\, n^{-\Gamma(\kappa^2-1)}\log n.
\end{equation*}
 Choosing $\kappa^2>1+1/((d+2)\Gamma)$ yields the claim.
 \end{proof}

\subsection{Minimax rate over $BV$}\label{SM:minimax}

Here we prove Theorem~\ref{MinimaxThm} by showing a lower bound for the minimax risk over Besov spaces $B^1_{1,1}$ with respect to the $L^q$-risk. This implies a lower bound for the minimax risk over $BV_L$, since
\begin{equation*}
BV_L\supset (B^{1}_{1,1}\cap L^{\infty})_L:=\big\{g\in B^1_{1,1}\, \big|\, \|g\|_{B^1_{1,1}}\leq L, \ \ \|g\|_{L^{\infty}}\leq L\big\}.
\end{equation*}
The minimax $L^q$-risk for $q\leq 1+2/d$ (dense case) is well understood, and the associated minimax rates have been known for a while to be $n^{-\frac{1}{d+2}}$. Its proof follows the classical strategy of constructing a set of alternatives in $(B^{1}_{1,1}\cap L^{\infty})_L$ that are well separated in the $L^q$-norm, and applying an information inequality (e.g.~Fano's inequality). It can be found in Chapter 10 of~\cite{hardle2012wavelets}, so we do not reproduce it here.

On the other hand, the regime $q\geq 1+2/d$ is far less popular, and we have not found any proof of what the minimax rate is there. The difficulty here is that $B^1_{1,1}$ is a Besov space with "$s\leq d/p$", and the literature has focused mainly on the case $s>d/p$ (with some exceptions, see~\cite{goldenschluger} and~\cite{lepskii2015}). Our proof that the minimax rate is $O(n^{-\frac{1}{dq}})$ in that regime follows the same idea as in the other regimes: we construct a set of well separated alternatives and show that no statistical procedure can distinguish them perfectly. As in the dense regime, our construction is based on Assouad's cube~\citep{assouad1983deux}.

\begin{proof}[Proof of Theorem~\ref{MinimaxThm}]

Our proof follows the proof of Theorem 10.3 in~\cite{hardle2012wavelets} closely. We structure it in several steps.
\\

\textbf{Construction of alternatives: } Let $g_0\in B^1_{1,1}\cap L^{\infty}$ satisfy
\begin{equation*}
\|g_0\|_{B^{1}_{1,1}}\leq L/2,\ \ \textup{ and } \ \ \|g_0\|_{L^{\infty}}\leq L/2.
\end{equation*}
Let $\psi_{j,k,e}$ be a basis of Daubechies wavelets with $S$ continuous partial derivatives, where $S>\max\{1,d/2\}$.
For $j\geq 0$ to be fixed later, let $R_j\subseteq \{0,\ldots,2^j-1\}^d\times E_j$ denote a subset of wavelet indices such that
\begin{equation*}
\textup{supp }\psi_{j,k,e}\cap\textup{supp }\psi_{j,k',e'}=\emptyset \ \ \textup{ for } (k,e)\neq (k',e')\in R_j.
\end{equation*}
Since Daubechies wavelets are compactly supported, we have $\# R_j\leq c2^{jd}$ for a constant $c>0$. 
Let further $S_j=\# R_j=\lfloor 2^{j\Delta}\rfloor$ for a real number $\Delta\in[0,d]$ to be chosen later. 
Consider now vectors $\epsilon\in\{-1,+1\}^{S_j}$ with components indexed by $(k,e)\in R_j$. Our alternatives will have the form
\begin{equation*}
g^{\epsilon}:=g_0+\gamma\sum_{(k,e)\in R_j}\epsilon_{k,e}\psi_{j,k,e}
\end{equation*}
for $\gamma>0$ to be chosen later. 
Define the set $\mathcal{G}:=\{g^{\epsilon}\, |\, \epsilon\in \{-1,+1\}^{S_j}\}$. Notice that all functions in this set satisfy
\begin{equation*}
\|g^{\epsilon}\|_{B^{1}_{1,1}}\leq L \ \ \textup{ and } \ \  \|g^{\epsilon}\|_{L^{\infty}}\leq L
\end{equation*}
provided that
\begin{equation}
\gamma\leq \frac{L}{2}\, 2^{-j(1-d/2+\Delta)} \ \ \textup{ and }\ \ \gamma\leq \frac{L}{2\, \|\psi\|_{L^{\infty}}}\, 2^{-jd/2},\label{jBalance}
\end{equation}
respectively. In the following we choose $\Delta=d-1$ in order to balance these two terms. Finally, the $L^q$-separation between these alternatives is
\begin{align}
\delta:=\inf_{\epsilon\neq\epsilon'}\|g^{\epsilon}-g^{\epsilon'}\|_{L^q}&=\|\gamma\psi_{j,k,e}\|_{L^q}=\gamma\, 2^{jd(\frac{1}{2}-\frac{1}{q})}\, \|\psi\|_{L^q}\label{Assouaddelta},
\end{align}
where the first equality follows from the disjoint supports of the wavelets. 

\textbf{Lower bound: } We use now Assouad's lemma for lower bounding the $L^q$-risk over $(B^1_{1,1}\cap L^{\infty})_L$. We reproduce the claim (Lemma 10.2 in~\cite{hardle2012wavelets}) for completeness. 
\begin{lemma}\label{lemma:Assouad}
For $\epsilon\in\{-1,+1\}^{S_j}$ and $(k,e)\in R_j$, define $\epsilon_{*k}:=(\epsilon_{(k_1,e_1)}',\ldots,\epsilon_{(k_{S_j},e_{S_j})}')$, where
\begin{equation*}
\epsilon_{(k'e')}'=\begin{cases}
\epsilon_{(k,e)}\ & \textup{ if } (k',e')\neq (k,e),
\\
-\epsilon_{(k,e)}\ & \textup{ if } (k',e')= (k,e).
\end{cases}
\end{equation*}
Assume there exist constants $\lambda,p_0>0$ such that
\begin{equation}
\mathbb{P}_{g^{\epsilon}}\big(LR(g^{\epsilon_{*k}},g^{\epsilon})>e^{-\lambda}\big)\geq p_0, \ \ \forall\epsilon, n,\label{eq:lemmaAssouad}
\end{equation}
where $\mathbb{P}_{g^{\epsilon}}$ denotes the probability with respect to observations drawn from $g^{\epsilon}$ in the white noise model, and $LR(g^{\epsilon_{*k}},g^{\epsilon})$ denotes the likelihood ratio between the observations associated to $g^{\epsilon_{*k}}$ and $g^{\epsilon}$. Then any estimator $\hat{f}$ satisfies
\begin{equation*}
\sup_{g^{\epsilon}\in \mathcal{G}}\mathbb{E}_{g^{\epsilon}}\|\hat{f}-g^{\epsilon}\|_{L^q}\geq \frac{e^{-\lambda}\, p_0}{2}\, \delta\, S_j^{1/q},
\end{equation*}
where $\delta$ is defined in~\eqref{Assouaddelta}.
\end{lemma}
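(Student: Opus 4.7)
The plan is to execute the standard four-step Assouad-cube argument.

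First, pass from the supremum to the uniform average over the hypercube, $\sup_{\epsilon}\mathbb{E}_{g^{\epsilon}}\|\hat f-g^\epsilon\|_{L^q}\ge 2^{-S_j}\sum_\epsilon\mathbb{E}_{g^\epsilon}\|\hat f-g^\epsilon\|_{L^q}$.

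Second, exploit that the wavelets $\{\psi_{j,k,e}\}_{(k,e)\in R_j}$ have pairwise disjoint supports $B_{k,e}$ in order to reduce to coordinate-wise estimation. Concretely, derive from $\hat f$ a nearest-neighbour label $\hat\epsilon_{(k,e)}\in\{-1,+1\}$ on each cube $B_{k,e}$ by comparing the $L^q(B_{k,e})$-distances of $\hat f$ to the two values of $g^\epsilon$ that differ only in the coordinate $(k,e)$. The triangle inequality together with $\|g^\epsilon-g^{\epsilon_{*k}}\|_{L^q(B_{k,e})}=2\gamma\|\psi_{j,k,e}\|_{L^q}=2\delta$ shows that whenever $\hat\epsilon_{(k,e)}\neq\epsilon_{(k,e)}$ one has $\|\hat f-g^\epsilon\|_{L^q(B_{k,e})}\ge\delta$. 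Setting $N(\epsilon):=\#\{(k,e)\in R_j:\hat\epsilon_{(k,e)}\neq\epsilon_{(k,e)}\}$ and using disjointness of the $B_{k,e}$ gives the deterministic bound $\|\hat f-g^\epsilon\|_{L^q}^q\ge\delta^q N(\epsilon)$, hence $\|\hat f-g^\epsilon\|_{L^q}\ge\delta\, N(\epsilon)^{1/q}$.

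Third, invoke the likelihood-ratio assumption~\eqref{eq:lemmaAssouad} via the standard Neyman--Pearson testing inequality
\[\mathbb{P}_{g^\epsilon}(\phi=1)+\mathbb{P}_{g^{\epsilon_{*k}}}(\phi=0)\;\ge\;\int\min(1,LR)\,d\mathbb{P}_{g^\epsilon}\;\ge\;e^{-\lambda}p_0,\]
applied with the test $\phi=\mathbf{1}\{\hat\epsilon_{(k,e)}\neq\epsilon_{(k,e)}\}$, to conclude that $\mathbb{P}_{g^\epsilon}(\hat\epsilon_{(k,e)}\neq\epsilon_{(k,e)})+\mathbb{P}_{g^{\epsilon_{*k}}}(\hat\epsilon_{(k,e)}\neq(\epsilon_{*k})_{(k,e)})\ge e^{-\lambda}p_0$ for every $\epsilon$ and every coordinate $(k,e)$. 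Averaging this over $\epsilon\in\{\pm1\}^{S_j}$ (each pair contributes to two vertices) and summing over $(k,e)\in R_j$ yields $2^{-S_j}\sum_\epsilon\mathbb{E}_{g^\epsilon}[N(\epsilon)]\ge\tfrac12\,e^{-\lambda}p_0\, S_j$.

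Fourth, convert the bound on $\mathbb{E}N$ into a bound on $\mathbb{E}N^{1/q}$. The elementary pointwise inequality $N^{1/q}\ge N\cdot S_j^{-(q-1)/q}$ (valid because $N\le S_j$) gives $2^{-S_j}\sum_\epsilon\mathbb{E}_{g^\epsilon}[N(\epsilon)^{1/q}]\ge\tfrac12 e^{-\lambda}p_0\, S_j^{1/q}$, and chaining this with Steps 1 and 2 delivers the advertised bound $\tfrac12 e^{-\lambda}p_0\,\delta\, S_j^{1/q}$. I expect Step 4 to be the main obstacle: a naive Jensen estimate on the concave map $x\mapsto x^{1/q}$ goes the wrong way for $q>1$, so one must exploit the crude pointwise cap $N\le S_j$ to trade the bound on $\mathbb{E}N$ for one on $\mathbb{E}N^{1/q}$ without losing the crucial $S_j^{1/q}$ factor that drives the minimax rate in the sparse regime.
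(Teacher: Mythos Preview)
Your argument is correct. The paper itself does not supply a proof of this lemma; it merely reproduces the statement and cites Lemma~10.2 of H\"ardle, Kerkyacharian, Picard and Tsybakov (2012). Your four-step outline is precisely the standard Assouad-cube proof one finds in that reference: reduction from supremum to average, localisation via the disjoint wavelet supports to obtain the pointwise bound $\|\hat f-g^\epsilon\|_{L^q}\ge\delta\,N(\epsilon)^{1/q}$, the two-point likelihood-ratio bound combined with the bijection $\epsilon\mapsto\epsilon_{*k}$ to get $2^{-S_j}\sum_\epsilon\mathbb{E}_{g^\epsilon}N(\epsilon)\ge\tfrac12 e^{-\lambda}p_0 S_j$, and finally the pointwise estimate $N^{1/q}\ge N\,S_j^{(1-q)/q}$ (valid since $0\le N\le S_j$) to convert the first-moment bound into the required $S_j^{1/q}$ lower bound. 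All steps are sound, including Step~4, which you correctly identified as the place where a naive concavity/Jensen argument would fail; the crude cap $N\le S_j$ is exactly the right fix and loses nothing at the level of the final rate.
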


\textbf{Verification of~\eqref{eq:lemmaAssouad}: }
The condition~\eqref{eq:lemmaAssouad} is easily verified in our setting with Gaussian observations under the condition that $n\gamma^2\leq c$ for $n$ large enough (see Section 10.5 in~\cite{hardle2012wavelets}). Indeed, by Markov's inequality we have
\begin{equation*}
\mathbb{P}_{g^{\epsilon}}\big(LR(g^{\epsilon_{*k}},g^{\epsilon})>e^{-\lambda}\big)\geq 1-\frac{1}{\log e^{\lambda}}\mathbb{E}_{g^{\epsilon}}\bigg|\log LR(g^{\epsilon_{*k}},g^{\epsilon})\bigg|,
\end{equation*} 
and using Proposition 6.1.7 in~\cite{gine2015mathematical} to bound the expectation by the Kullback-Leibler divergence we get
\begin{equation*}
\mathbb{P}_{g^{\epsilon}}\big(LR(g^{\epsilon_{*k}},g^{\epsilon})>e^{-\lambda}\big)\geq 1-\frac{1}{\lambda}\bigg(K(dP_{g^{\epsilon_{*k}}},dP_{g^{\epsilon}})+\sqrt{2 K(dP_{g^{\epsilon_{*k}}},dP_{g^{\epsilon}})}\bigg).
\end{equation*}
Using the Cameron-Martin Theorem to interpret the Gaussian probability measures (see Theorem 2.6.13 in~\cite{gine2015mathematical}), the Kullback-Leibler divergence between Gaussian measures is easily computer and gives
\begin{equation*}
K(dP_{g^{\epsilon_{*k}}},dP_{g^{\epsilon}})=\frac{n}{2\sigma^2}\|g^{\epsilon_{*k}}-g^{\epsilon}\|_{L^2}^2=\frac{n\gamma^2}{2\sigma^2}\|\psi_{j,k,e}\|_{L^2}^2=\frac{n\gamma^2}{2\sigma^2}.
\end{equation*}
Hence, choosing $\gamma=t_0\, n^{-1/2}$ for a small enough constant $t_0>0$ gives~\eqref{eq:lemmaAssouad}.

\textbf{Application of Lemma~\ref{lemma:Assouad}: } The conclusion of the lemma applies, and we can lower bound the $L^q$-risk over the class $(B^1_{1,1}\cap L^{\infty})_L$ by the risk over $\mathcal{G}$, i.e.,
\begin{equation}
\sup_{f\in (B^1_{1,1}\cap L^{\infty})_L}\mathbb{E}_f\|\hat{f}-f\|_{L^q}\geq \sup_{g^{\epsilon}\in \mathcal{G}}\mathbb{E}_{g^{\epsilon}}\|\hat{f}-g^{\epsilon}\|_{L^q}\geq \frac{e^{-\lambda}\, p_0}{2}\, \delta\, 2^{j\Delta/q}\label{LowRisk1}
\end{equation}
for any estimator $\hat{f}$. 
It remains to choose the scale parameter $j\geq 0$. Recall that we have chosen $\gamma=t_0\, n^{-1/2}$. Further, by~\eqref{jBalance} we also need $\gamma\leq c\, 2^{-j(1-d/2+\Delta)}=c\, 2^{-jd/2}$, for the choice $\Delta=d- 1$. We choose $j$ such that $2^{-jd/2}=c\, n^{-1/2}$, which gives the bound in~\eqref{LowRisk1}
\begin{align*}
\delta\, 2^{j\Delta/q}=c\, \gamma\, 2^{jd(\frac{1}{2}-\frac{1}{q})}\, 2^{j\Delta/q}&=c\, \bigg(\frac{1}{n}\bigg)^{\frac{1}{2}-\big(\frac{1}{2}-\frac{1}{q}\big)-\frac{\Delta}{dq}}
=c\, n^{-\frac{1}{dq}}.
\end{align*}
This completes the proof.
\end{proof}

\section{Summary and outlook}\label{Sect:Summary}

We presented a family of estimators in the Gaussian white noise model defined by minimization of the $BV$-seminorm under a 
constraint on the frame coefficients of the residuals. Under conditions on the frame that amount to a certain compatibility with the Besov space $\besov$, we show that these estimators attain the minimax optimal 
rate of convergence in any dimension up to logarithmic factors. There are still several open questions regarding extensions of 
our estimator. First, the extension to a nonparametric regression model with discretely sampled data, which would involve a discretization of the inner 
products $\langle \phi_{\omega},f\rangle_{L^2}$. This discretization induces an error of the order $O(n^{-1/d})$ 
in the approximation of the Besov norm by the maximum of the frame coefficients of a function, which results in slower 
convergence rates of the form $\|\hat{f}_{\Phi}-f\|_{L^q}\leq C\, n^{-\min\{\frac{1}{d+2},\frac{1}{dq}\}\min\{1,2/d\}}\textup{Polylog}_d(n)$. 
In dimensions $d=1,2$ the polynomial rate equals $n^{-\min\{\frac{1}{d+2},\frac{1}{dq}\}}$, which coincides with the minimax rate over the class $BV_L$ up to logarithmic factors. In dimension $d\geq 3$, the discretization error dominates and the polynomial rate is $n^{-\frac{2}{d(d+2)}}$ for $q\leq 1+2/d$, and $n^{-\frac{1}{dq}}$ for $q>1+2/d$. 
We do not know whether this rate is sharp in a minimax sense (up to logarithmic factors). Notice that the asymptotic equivalence 
of the white noise and the multivariate nonparametric regression models derived by~\cite{reiss2008} does not apply for functions 
of bounded variation, so the minimax rates need not be the same in the two models. We leave the clarification of this question for future research.

A second question concerns the relation between the multiscale data-fidelity and statistical testing. In fact, our use of dictionary elements with $L^2$-norm equal to one is analogous to the multiplicative scaling used by~\cite{dumbgen2001} to correctly weight their multiresolution test statistics. This raises the question of whether an \textit{additive scaling} in our data-fidelity is necessary in our setting, as it is in theirs. The answer is that such an additive scaling would help us remove some (but not all) of the logarithmic terms in the error bound in Theorem~\ref{Res:Main_thm}. However, it would imply additional difficulties in the theoretical analysis of the estimator, since the constraint would no longer match the Besov scale exactly. Alternatively, a different multiplicative scaling could be used to link the multiscale data-fidelity with the \textit{logarithmic} Besov spaces (see Section 4.4 in~\cite{gine2015mathematical}). We leave as an open question whether these modified data-fidelities and Besov spaces could yield an improved performance.

Another interesting question concerns the choice of the risk functional. We have proven convergence rates with respect to the $L^q$-risk, which measures the \textit{global} error made by the estimator. In contrast, the use of multiscale risk functionals has been proposed as an alternative quality measure which takes spatial adaptation into account (see e.g.~\cite{cai2005nonparametric} and~\cite{li2016variational}). We expect that estimators of the form~\eqref{Intro:TV_est1} should perform particularly well with respect to such multiscale risks, and postpone the answer to that question for future work.

The extension of our theory to statistical inverse problems is particularly attractive, since in many applications one only has 
access to a transformed version of the object of interest (see e.g.~\cite{frick2013statistical} 
and~\cite{niinimaki2016multiresolution} for applications of TV-regularization to microscopy and tomography, respectively). 
The analysis done in the present paper is expected to be adaptable to inverse 
problems if the operator is assumed to have ``good'' mapping properties in the Besov scale $B^{s}_{\infty,\infty}$. 
The modification would essentially involve a constraint of the form 
$\max_{\omega\in\Omega_n}\big|\langle \phi_{\omega},Tg\rangle -Y_{\omega}\big|\leq\gamma_n$ in~\eqref{Intro:TV_est1}, 
where $T$ is the forward operator (see~\cite{frick2013statistical} and~\cite{li2016variational} for examples and analysis of such an estimator). 
From this constraint it is apparent that the dictionary $\Phi$ has to depend on the forward operator $T$ 
(see~\cite{proksch2016} for a similar construction). 
Finally, the extension to nongaussian noise models is of interest in many applications. In that respect, note that the analysis of the 
estimator~\eqref{Intro:TV_est1} depends on the tail behavior of the statistic 
$\max_{\omega\in\Omega_n}|\langle \phi_{\omega},dW\rangle|$ being \textit{subgaussian}. Finally, the extension to SDE-based models (see e.g.~\cite{gobet2004}) appears to us of interest.

\section{Appendix}\label{SM:section}

This Appendix is organized as follows. In Section~\ref{SM:InterpInequ} we prove the interpolation inequalities of Proposition~\ref{InterpInequality}, and in Section~\ref{SM:Proof_Diction} we prove Propositions~\ref{Res:Veri_Wavelet},~\ref{Res:veri_multi} and~\ref{Res:Veri_Curv}

\subsection{Interpolation inequalities}\label{SM:InterpInequ}

Here we prove the interpolation inequalities in Proposition~\ref{InterpInequality} in the main text, which are based on the following interpolation result.

\begin{proposition}\label{Res:InterProp}
 Let $s\in\mathbb{R}$ and $p\in(1,\infty]$. Let $\gamma=1+(s-1)p'/d$ be such that $\gamma<1-1/d$ or $\gamma>1$, where $p'$ is 
 the H\"older conjugate of $p$. Then for any $\vartheta\in(0,1)$ and parameters such that
\begin{equation*}
 \frac{1}{q}=\frac{1-\vartheta}{p}+\vartheta,\ \ \ t=(1-\vartheta)s+\vartheta,
\end{equation*}
we have
\begin{equation}
 \|g\|_{B^t_{q,q}}\leq C\|g\|_{B^s_{p,p}}^{1-\vartheta}\|g\|_{BV}^{\vartheta}\label{Res:GenIn}
\end{equation}
for any $g\in B^s_{p,p}\cap BV(\mathbb{T}^d)$.
\end{proposition}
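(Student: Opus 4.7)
The plan is to prove Proposition~\ref{Res:InterProp} by transferring the argument of Cohen--Dahmen--Daubechies--DeVore~\cite{cohen2003harmonic} from $\mathbb{R}^d$ to the torus, combining the wavelet characterization of Besov norms with the sharp weak-$\ell^1$ estimate on the wavelet coefficients of $BV$ functions. The strategy is to pass to the coefficient side, where $B^s_{p,p}$ becomes a weighted $\ell^p$ space and $BV$ is controlled by a weighted weak-$\ell^1$ space, and then apply real (Marcinkiewicz-type) interpolation at the sequence level.

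First I would fix an $S$-regular periodic wavelet basis $\{\psi_{j,k,e}\}$ with $S>\max(|s|,|t|,1)$, as in Section~\ref{Res:Basic_Def}. Writing $c_{j,k,e}=\langle \psi_{j,k,e},g\rangle$, the characterization~\eqref{Res:BesovNorm} identifies
\[
\|g\|_{B^r_{u,u}}\asymp \Big(\sum_{j,k,e}2^{ju(r+d/2-d/u)}|c_{j,k,e}|^u\Big)^{1/u},
\]
so that~\eqref{Res:GenIn} reduces to a weighted sequence-space inequality relating three different exponent/weight pairs, with $(s,p)$, $(t,q)$ and the formal endpoint $(1,1)$ corresponding to the $BV$ scale.

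Second, the key analytic input is the periodic analogue of the Cohen--DeVore--Petrushev--Xu theorem~\cite{cohen1999nonlinear,cohen2003harmonic}: in the $BV$-normalization $a_{j,k,e}:=2^{j(1-d/2)}|c_{j,k,e}|$, the sequence lies in weak $\ell^1$ with
\[
\sup_{\lambda>0}\lambda\,\#\{(j,k,e):a_{j,k,e}>\lambda\}\le C\,\|g\|_{BV}.
\]
The passage from $\mathbb{R}^d$ to $\mathbb{T}^d$ is effected by periodically extending $g$ and truncating to a fundamental cube enlarged by the wavelet support radius; the coarse-scale wavelets whose supports wrap around the torus are only finitely many and contribute at most $O(\|g\|_{L^1})\le O(\|g\|_{BV})$ to the counting estimate, preserving the bound.

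Third, with these two facts in hand I would run a standard sort-and-split argument on the rescaled coefficients $(a_{j,k,e})$. Ordering the coefficients by non-increasing rearrangement and splitting the weighted $\ell^q$ sum at a threshold $\lambda$, the large-coefficient portion is controlled by the weak-$\ell^1$ norm (hence by $\|g\|_{BV}$), while the small-coefficient portion is controlled by the weighted $\ell^p$ norm (hence by $\|g\|_{B^s_{p,p}}$). Optimizing in $\lambda$ recovers exactly the exponents $1/q=(1-\vartheta)/p+\vartheta$ and $t=(1-\vartheta)s+\vartheta$ prescribed by the statement; reversing the wavelet characterization yields~\eqref{Res:GenIn}. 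The dichotomy $\gamma<1-1/d$ or $\gamma>1$ is precisely the admissibility condition ensuring that the two weights $2^{jp(s+d/2-d/p)}$ and $2^{j(1-d/2)}$ lie on the same side of the $BV$ line in the $(\text{smoothness},1/\text{integrability})$ diagram, so that the balancing is well-defined and $q$ falls in the range where the interpolation is bounded.

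The main obstacle will be the periodic transport of the weak-$\ell^1$ estimate: the Euclidean proof of~\cite{cohen2003harmonic} uses stopping-time and covering arguments on dyadic cubes, and one must check that the boundary contributions from periodic wrapping do not destroy the linear-in-$\lambda^{-1}$ counting bound, particularly at coarse scales. Once that step is secured, the remainder of the proof is a fairly standard real-interpolation computation on weighted sequences.
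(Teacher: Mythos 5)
Your proposal follows the same route the paper itself takes: the paper proves Proposition~\ref{Res:InterProp} simply by invoking the weak-$\ell^1$ analysis of wavelet coefficients of $BV$ functions from \cite{cohen2003harmonic} and asserting that the adaptation to periodic functions involves no new ideas, which is exactly the transfer you describe. Your sketch --- the $BV$-normalized weak-$\ell^1$ coefficient bound, the sequence-level real-interpolation (sort-and-split) argument, and the control of the finitely many coarse-scale wavelets affected by periodization via $\|g\|_{L^1}\leq\|g\|_{BV}$ --- is consistent with that argument, so nothing further is needed.
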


Proposition~\ref{Res:InterProp} is a generalization to periodic functions of a result by~\cite{cohen2003harmonic}. Its proof is based on a refined analysis of the wavelet coefficients of $BV$ functions, that are shown to belong to weak weighted $\ell^p$ spaces. Alternatively, an independent proof by~\cite{ledoux2003} is based on the thermic representation of Besov spaces and on pseudo-Poincar\'e inequalities for the semigroup associated with that thermic representation. 
Since the adaptation of the proof from~\cite{cohen2003harmonic} to the periodic case does not involve any novel ideas, we omit it. In order to derive part b) of Proposition~\ref{InterpInequality} from Proposition~\ref{Res:InterProp}, we choose $s=-d/2$, $t=0$, $p=\infty$ and $q=(d+2)/d$. Then for $d\geq 2$, the norm in the left-hand side of~\eqref{Res:GenIn} can be readily reformulated in terms of an $L^q$ norm. For $d=1$ the situation is more involved, since the embedding $B^0_{3,3}\hookrightarrow L^3$ does \textit{not} hold. A more refined argument is needed to prove convergence in $L^q$ for $d=1$, for which we use a variation of part a) of Proposition~\ref{InterpInequality}. This difference is responsible for the different logarithmic factors in~\eqref{Res:Gen_Conv}.

\begin{proposition}\label{ParticInt}
 Let $d\geq 2$ and $q\in\big[1,\frac{d+2}{d}\big]$. Then there is a constant $C>0$ such that
 \begin{equation*}
  \|f\|_{L^q(\mathbb{T}^d)}\leq C \|f\|_{B^{-d/2}_{\infty,\infty}(\mathbb{T}^d)}^{\frac{2}{d+2}}\|f\|_{BV(\mathbb{T}^d)}^{\frac{d}{d+2}}
 \end{equation*}
holds for any $f\in B^{-d/2}_{\infty,\infty}(\mathbb{T}^d)\cap BV(\mathbb{T}^d)$.
\end{proposition}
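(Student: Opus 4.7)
The plan is to derive Proposition~\ref{ParticInt} as a direct specialization of Proposition~\ref{Res:InterProp}, followed by a standard Besov--$L^q$ embedding, and then a trivial monotonicity argument to cover the full range of $q$. First, I would set $s=-d/2$ and $p=\infty$ in Proposition~\ref{Res:InterProp}, so that $p'=1$ and $\gamma=1+(s-1)p'/d=1/2-1/d$. For $d\geq 2$ one has $\gamma\in[0,1/2)$, hence $\gamma<1-1/d$, and the admissibility hypothesis of Proposition~\ref{Res:InterProp} is satisfied. Choosing the interpolation parameter $\vartheta=d/(d+2)\in(0,1)$, the formulas $t=(1-\vartheta)s+\vartheta$ and $1/q^{\ast}=(1-\vartheta)/p+\vartheta$ simplify to $t=0$ and $q^{\ast}=(d+2)/d$.

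With these choices, Proposition~\ref{Res:InterProp} gives
\[
\|f\|_{B^{0}_{q^{\ast},q^{\ast}}(\mathbb{T}^d)}\leq C\,\|f\|_{\besov}^{2/(d+2)}\,\|f\|_{BV}^{d/(d+2)}.
\]
The next step is to pass from the Besov norm on the left to the $L^{q^{\ast}}$-norm. Here the restriction $d\geq 2$ is exactly what is needed, since it forces $q^{\ast}=(d+2)/d\in(1,2]$. In that range the embedding $B^{0}_{q^{\ast},q^{\ast}}(\mathbb{T}^d)\hookrightarrow L^{q^{\ast}}(\mathbb{T}^d)$ is classical; it follows from the wavelet characterization of $B^{0}_{q^{\ast},q^{\ast}}$ combined with the fact that smooth enough wavelet bases are unconditional bases for $L^{q^{\ast}}$ when $q^{\ast}\in(1,2]$ (e.g.\ via Littlewood--Paley / Khintchine-type square-function estimates, see \cite{TriebelII} or Section~4.3 of \cite{gine2015mathematical}). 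Chaining the two inequalities yields the conclusion at the endpoint $q=q^{\ast}$.

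For $q\in[1,q^{\ast})$, I would conclude by monotonicity of $L^{q}$-norms on the probability space $\mathbb{T}^d$: since the ambient measure is of total mass $1$, H\"older's inequality immediately gives $\|f\|_{L^{q}(\mathbb{T}^d)}\leq \|f\|_{L^{q^{\ast}}(\mathbb{T}^d)}$, and the desired bound extends to the whole range $q\in\big[1,(d+2)/d\big]$.

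The main obstacle in this route is really Proposition~\ref{Res:InterProp} itself, which the excerpt invokes as a known generalization of the Cohen--Dahmen--Daubechies--DeVore inequality to $\mathbb{T}^d$. Its proof relies on a delicate analysis showing that the wavelet coefficients of a $BV$ function lie in a suitable weak weighted $\ell^{p}$ space, from which real-interpolation with $B^{s}_{p,p}$ gives \eqref{Res:GenIn}; the adaptation to $\mathbb{T}^d$ only requires replacing the dyadic decomposition of $\mathbb{R}^d$ by a periodic wavelet basis and keeping track of the scale $j=0$ separately, so no new ideas are needed. By contrast, the Besov--$L^{q^{\ast}}$ embedding and the monotonicity step are routine, which is why the crux of Proposition~\ref{ParticInt} is the bookkeeping of the interpolation parameters and the verification of the admissibility condition on $\gamma$.
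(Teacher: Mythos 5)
Your proposal is correct and follows essentially the same route as the paper's own proof: specializing Proposition~\ref{Res:InterProp} with $s=-d/2$, $p=\infty$, $t=0$ to obtain the bound on $\|f\|_{B^0_{(d+2)/d,(d+2)/d}}$, then applying the embedding $B^0_{r,r}\hookrightarrow L^r$ for $r\in(1,2]$, and finishing with monotonicity of $L^q$-norms on $\mathbb{T}^d$. The only (immaterial) difference is that you justify the embedding via wavelet unconditionality and Littlewood--Paley theory, whereas the paper routes it through the Triebel--Lizorkin chain $B^0_{r,r}=F^0_{r,r}\subset F^0_{r,2}=L^r$; both are standard statements of the same fact.
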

\begin{proof}
  In the notation of Proposition~\ref{Res:InterProp}, the choice $s=-d/2$ and $p=\infty$ yields $\gamma=\frac{1}{2}-\frac{1}{d}<1-\frac{1}{d}$, 
  so Proposition~\ref{Res:InterProp} applies and yields for $t=0$ the inequality
  \begin{equation}
   \|f\|_{B^0_{\frac{d+2}{d},\frac{d+2}{d}}(\mathbb{T}^d)}\leq C \|f\|_{B^{-d/2}_{\infty,\infty}(\mathbb{T}^d)}^{\frac{2}{d+2}}\|f\|_{BV(\mathbb{T}^d)}^{\frac{d}{d+2}}.\label{IntermediateInterp}
  \end{equation}
Now, Remark 4 in Section 3.5.1 of~\cite{SchmeisserTriebel} gives
\begin{equation*}
 B_{r,r}^0(\mathbb{T}^d)=F^0_{r,r}(\mathbb{T}^d)\subset F_{r,2}^0(\mathbb{T}^d),
\end{equation*}
where the last embedding is continuous and holds for $0<r\leq 2$, and $F^s_{p,q}(\mathbb{T}^d)$ is a Triebel-Lizorkin space 
(see~\cite{SchmeisserTriebel}). Finally, we use that
\begin{equation*}
F_{r,2}^0(\mathbb{T}^d)=L^r(\mathbb{T}^d)
\end{equation*}
holds for any $1<r<\infty$ (see Remark 2 of Section 3.5.4 in~\cite{SchmeisserTriebel}). 
These embeddings give the inequality
\begin{equation*}
 \|f\|_{L^r(\mathbb{T}^d)}\leq C\|f\|_{B_{r,r}^0(\mathbb{T}^d)}
\end{equation*}
for any $r\in(1,2]$ and a constant $C>0$ independent of $f$. Hence, for $q\in\big[1,\frac{d+2}{d}\big]$, $d\geq 2$, we have
\begin{equation*}
 \|f\|_{L^q(\mathbb{T}^d)}\leq \|f\|_{L^{\frac{d+2}{d}}(\mathbb{T}^d)}\leq C\|f\|_{B_{\frac{d+2}{d},\frac{d+2}{d}}^0(\mathbb{T}^d)},
\end{equation*}
which together with~\eqref{IntermediateInterp} yields the claim.
\end{proof}

Proposition~\ref{ParticInt} gives us a (sharp) interpolation inequality for $d\geq 2$ only. For the one-dimensional case we can nevertheless 
derive a slightly weaker result, which in the proof of convergence rates still gives the right polynomial rate of convergence but 
some additional logarithmic terms.

\begin{proposition}\label{ParticIntd=1}
 Let $d=1$ and $q\in\big[1,3\big]$. Then there is a constant $C>0$ such that for any $n\in\mathbb{N}$ we have
 \begin{equation*}
  \|f\|_{L^q}\leq C (\log n )\, \|f\|_{B^{-d/2}_{\infty,\infty}}^{2/3}\|f\|_{BV}^{1/3}+C\, n^{-1}\, \|f\|_{L^{\infty}}^{2/3}\, \|f\|_{BV}^{1/3}
 \end{equation*}
 for any $f\in L^{\infty}(\mathbb{T}^d)\cap BV(\mathbb{T}^d)$.
\end{proposition}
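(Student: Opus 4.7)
The plan is to exploit Proposition~\ref{Res:InterProp} to obtain a bound on a Besov norm of $f$, and then convert this Besov bound into an $L^q$ bound by truncating the wavelet expansion at an appropriate scale. The crucial difference from the case $d\geq 2$ handled in Proposition~\ref{ParticInt} is that the embedding $B^0_{3,3}(\Td)\hookrightarrow L^3(\Td)$ \emph{fails} in dimension one, which is precisely what forces both the logarithmic factor and the residual term in the statement.

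Concretely, I would first apply Proposition~\ref{Res:InterProp} with $d=1$, $s=-1/2$, $p=\infty$, $\vartheta=1/3$, $t=0$ (so $\gamma=-1/2<0=1-1/d$), obtaining
\begin{equation*}
\|f\|_{B^0_{3,3}}\leq C\|f\|_{B^{-1/2}_{\infty,\infty}}^{2/3}\|f\|_{BV}^{1/3}.
\end{equation*}
I would then choose $J\in\mathbb{N}$ with $J\sim 3\log n/\log 2$ and split $f=f_J+r_J$, where $f_J=\sum_{j<J}\Delta_j f$ and $\Delta_j f=\sum_{k,e}\langle\psi_{j,k,e},f\rangle\psi_{j,k,e}$ denotes the block at scale $j$. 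For the low-frequency part, since only $J$ scales contribute, the elementary Hölder-type bound $(\sum_{j<J}a_j)^3\leq J^2\sum_{j<J}a_j^3$ applied pointwise gives $|f_J(x)|^3\leq J^2\sum_{j<J}|\Delta_j f(x)|^3$; integrating and invoking the wavelet characterization~\eqref{Res:BesovNorm} of $B^0_{3,3}$ yields
\begin{equation*}
\|f_J\|_{L^3}\leq CJ^{2/3}\|f\|_{B^0_{3,3}}\leq C(\log n)\|f\|_{B^{-1/2}_{\infty,\infty}}^{2/3}\|f\|_{BV}^{1/3}.
\end{equation*}

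For the high-frequency part I would use the interpolation $\|r_J\|_{L^3}\leq\|r_J\|_{L^\infty}^{2/3}\|r_J\|_{L^1}^{1/3}$. Uniform boundedness of wavelet partial sums for $S$-regular wavelets with $S>1/2$ gives $\|r_J\|_{L^\infty}\leq C\|f\|_{L^\infty}$, while in dimension one the continuous embedding $BV(\Td)\hookrightarrow B^1_{1,\infty}(\Td)$ together with~\eqref{Res:BesovNorm} yields $\sum_{k,e}|\langle\psi_{j,k,e},f\rangle|\leq C2^{-j/2}\|f\|_{BV}$. Since $\|\psi_{j,k,e}\|_{L^1}\leq C2^{-j/2}$, summation over $j\geq J$ produces $\|r_J\|_{L^1}\leq C2^{-J}\|f\|_{BV}$, and therefore $\|r_J\|_{L^3}\leq Cn^{-1}\|f\|_{L^\infty}^{2/3}\|f\|_{BV}^{1/3}$. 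Adding the two estimates proves the claim for $q=3$; for $q\in[1,3)$ it follows from $L^3(\Td)\hookrightarrow L^q(\Td)$ since $|\Td|=1$. The main obstacle is precisely the failure of the endpoint embedding $B^0_{3,3}\hookrightarrow L^3$ in $d=1$; the Hölder-in-scales trick salvages this at the unavoidable cost of a factor $J^{2/3}\lesssim \log n$, which accounts for the logarithm in the first term of the proposition.
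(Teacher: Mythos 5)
Your proposal is correct and follows essentially the same route as the paper: apply Proposition~\ref{Res:InterProp} with $s=-1/2$, $p=\infty$, $t=0$ to control $\|f\|_{B^0_{3,3}}$, then truncate the wavelet expansion at $J\asymp 3\log_2 n$, paying a logarithmic factor on the low frequencies and using the coefficient decay coming from $f\in L^\infty$ and $BV\hookrightarrow B^1_{1,\infty}$ on the tail (this is exactly the content of the paper's Proposition~\ref{new_d=1}). The only cosmetic differences are that you bound the tail by $L^1$--$L^\infty$ interpolation plus uniform $L^\infty$-boundedness of the partial-sum projections rather than blockwise, and your pointwise H\"older-in-scales trick even yields the slightly sharper factor $J^{2/3}$ in place of $J$; just note that the step $\|\sum_{k,e}c_{j,k,e}\psi_{j,k,e}\|_{L^3}^3\lesssim 2^{j/2}\sum_{k,e}|c_{j,k,e}|^3$, which you invoke implicitly when passing to the $B^0_{3,3}$ norm, requires the finite-overlap argument for compactly supported wavelets (the paper's Lemma~\ref{lemma:wavelets}) and not merely the definition~\eqref{Res:BesovNorm}.
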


\begin{proof}
The claim follows from Proposition~\ref{Res:InterProp} with $s=-1/2$ and $p=\infty$, which gives a bound on the $B^0_{3,3}$ norm. The $L^q$-norm, $q\in[1,3]$, can be upper bounded by the $L^3$-norm, which itself can be upper bounded by the $B^0_{3,3}$ norm using Proposition~\ref{new_d=1} below. Choosing $J=3\log n$ yields the claim.
\end{proof}

\begin{proposition}\label{new_d=1}
Let $g\in L^{\infty}\cap BV$. Then for any $J\in\mathbb{N}$ we have
\begin{equation*}
\|g\|_{L^3}\leq C\,  J\, \|g\|_{B^0_{3,3}}+C\, 2^{-J/3}\|g\|_{L^{\infty}}^{2/3}\|g\|_{BV}^{1/3}
\end{equation*}
for a constant $C>0$ independent of $g$.
\end{proposition}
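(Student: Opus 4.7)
The plan is to split $g$ at wavelet scale $J$, control the low‐frequency piece by $\|g\|_{B^0_{3,3}}$ with a polynomial‐in‐$J$ loss, and control the high‐frequency piece via $\|g\|_{L^\infty}$ and $\|g\|_{BV}$. Fix a sufficiently regular one‐dimensional wavelet basis as in Section~\ref{Res:Basic_Def}, and for $j\ge 0$ let $Q_j g:=\sum_{k,e}\langle\psi_{j,k,e},g\rangle\psi_{j,k,e}$ be the wavelet projection onto scale $j$. Write $g=g_J+g^J$, where $g_J:=\sum_{0\le j<J}Q_j g$ is the projection onto the multiresolution space $V_J$ and $g^J:=g-g_J$ is the high‐frequency remainder.

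For the low‐frequency piece, I would use the bounded overlap of wavelets at a fixed scale to obtain $\|Q_j g\|_{L^3}^3\le C\sum_{k,e}|\langle\psi_{j,k,e},g\rangle|^3\,\|\psi_{j,k,e}\|_{L^3}^3=C\,2^{j/2}\sum_{k,e}|\langle\psi_{j,k,e},g\rangle|^3$. By the wavelet definition of the Besov norm in~\eqref{Res:BesovNorm} with $d=1$ and $s=0$, $p=q=3$, this is, up to a constant, the scale‐$j$ contribution to $\|g\|_{B^0_{3,3}}^3$. Triangle inequality together with H\"older across the $J$ scales gives
\begin{equation*}
\|g_J\|_{L^3}\le\sum_{0\le j<J}\|Q_j g\|_{L^3}\le J^{2/3}\bigg(\sum_{0\le j<J}\|Q_j g\|_{L^3}^3\bigg)^{1/3}\le C\,J\,\|g\|_{B^0_{3,3}}.
\end{equation*}

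For the high‐frequency piece, I would use the elementary interpolation $\|g^J\|_{L^3}^3\le\|g^J\|_{L^\infty}^2\|g^J\|_{L^1}$. The $L^\infty$‐factor is bounded by $C\|g\|_{L^\infty}$ because the wavelet projection $P_{V_J}$ is uniformly bounded on $L^\infty$ for an $S$‐regular wavelet with $S>1/2$. For the $L^1$‐factor, the key ingredient is the one‐dimensional embedding $BV\hookrightarrow B^1_{1,\infty}$, which yields $\sum_{k,e}|\langle\psi_{j,k,e},g\rangle|\le C\,2^{-j/2}\,\|g\|_{BV}$. Combined with $\|\psi_{j,k,e}\|_{L^1}\le C\,2^{-j/2}$, this gives $\|Q_j g\|_{L^1}\le C\,2^{-j}\|g\|_{BV}$, and summing the resulting geometric series over $j\ge J$ produces $\|g^J\|_{L^1}\le C\,2^{-J}\|g\|_{BV}$. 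Hence
\begin{equation*}
\|g^J\|_{L^3}\le C\,\|g\|_{L^\infty}^{2/3}\,\big(2^{-J}\|g\|_{BV}\big)^{1/3}=C\,2^{-J/3}\|g\|_{L^\infty}^{2/3}\|g\|_{BV}^{1/3}.
\end{equation*}

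Adding the two estimates yields the claim. The conceptual obstacle is the mismatch between $L^3=F^0_{3,2}$ and $B^0_{3,3}=F^0_{3,3}$: no direct continuous embedding holds in either direction, so a \emph{finite} wavelet sum has to be used, and the polynomial factor in $J$ quantifies precisely the cost of replacing the $\ell^2$ aggregation across scales (implicit in $L^3$) by the $\ell^3$ aggregation (in $B^0_{3,3}$). Secondary, but routine, technical points are the uniform $L^\infty$ boundedness of the projection $P_{V_J}$ and the embedding $BV\hookrightarrow B^1_{1,\infty}$ in dimension one; both follow from standard properties of a regular enough wavelet basis.
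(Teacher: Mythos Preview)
Your proof is correct and follows essentially the same route as the paper: split $g$ at wavelet scale $J$, bound the low-frequency part via the bounded-overlap estimate for $\|Q_j g\|_{L^3}$ (the paper's Lemma~\ref{lemma:wavelets}) and a triangle inequality across the $J$ scales, and bound the high-frequency tail using the $L^\infty$ and $BV\hookrightarrow B^1_{1,\infty}$ controls on the wavelet coefficients. The only cosmetic differences are that for the tail you interpolate $\|g^J\|_{L^3}\le\|g^J\|_{L^\infty}^{2/3}\|g^J\|_{L^1}^{1/3}$ at the function level (invoking uniform $L^\infty$-boundedness of the projection), whereas the paper applies the same H\"older bound directly to the coefficients via $\sum|c|^3\le(\max|c|)^2\sum|c|$, and your H\"older across scales yields the slightly sharper factor $J^{2/3}$ in place of the paper's $J$.
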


Before we prove Proposition~\ref{new_d=1} we give a technical lemma concerning wavelet series.

\begin{lemma}\label{lemma:wavelets}
Let $\{\psi_{j,k,e}\}$ denote a basis of compactly supported wavelets in $L^2(\Td)$. There is a constant $C_{\psi}$ such that
\begin{equation*}
\int_{\Td}\bigg|\sum_{(k,e)\in P_j^d\times E_j}c_{j,k,e}\psi_{j,k,e}(x)\bigg|^3\, dx\leq C_{\psi}\, 2^{j3d(1/2-1/3)}\sum_{(k,e)\in P_j^d\times E_j}|c_{j,k,e}|^3
\end{equation*}
for any $j\in\mathbb{N}$ and any coefficients $\{c_{j,k,e}\}$.
\end{lemma}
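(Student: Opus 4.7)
The plan rests on the key structural feature of compactly supported wavelets: at each fixed scale $j$, only boundedly many of the functions $\psi_{j,k,e}$ can be nonzero at any given point $x\in\Td$. More precisely, since $\psi_{j,k,e}(x)=2^{jd/2}\psi_e(2^jx-k)$ and the mother/father wavelets $\psi_e$ are compactly supported, there exists an integer $N_\psi$ depending only on the supports of the $2^d$ functions $\psi_e$ such that, uniformly in $j$, for every $x\in\Td$ the set $\{(k,e)\in P_j^d\times E_j\,|\,\psi_{j,k,e}(x)\neq 0\}$ has cardinality at most $N_\psi$. This finite-overlap property is the only qualitative ingredient of the argument.

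Given this, the first step is to apply the discrete power-mean inequality $|a_1+\cdots+a_m|^3\leq m^2(|a_1|^3+\cdots+|a_m|^3)$ pointwise: for each $x$, only $m\leq N_\psi$ terms in the sum $\sum_{(k,e)}c_{j,k,e}\psi_{j,k,e}(x)$ are nonzero, so
\begin{equation*}
\bigg|\sum_{(k,e)\in P_j^d\times E_j}c_{j,k,e}\psi_{j,k,e}(x)\bigg|^3
\leq N_\psi^2\sum_{(k,e)\in P_j^d\times E_j}|c_{j,k,e}|^3\,|\psi_{j,k,e}(x)|^3.
\end{equation*}
The second step is to integrate term by term and use the scaling identity
\begin{equation*}
\int_{\Td}|\psi_{j,k,e}(x)|^3\,dx=2^{3jd/2}\int_{\Td}|\psi_e(2^jx-k)|^3\,dx=2^{jd/2}\|\psi_e\|_{L^3(\Rd)}^3,
\end{equation*}
which follows from a change of variables (valid for $j\geq 0$ on the torus once the wavelets are periodized, since for compactly supported $\psi_e$ only boundedly many translates appear under periodization, absorbed into the constant).

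Collecting these bounds with $C_\psi:=N_\psi^2\max_{e\in E_0}\|\psi_e\|_{L^3(\Rd)}^3$ and observing the algebraic identity $3d(\tfrac12-\tfrac13)=\tfrac{d}{2}$, we obtain exactly
\begin{equation*}
\int_{\Td}\bigg|\sum_{(k,e)}c_{j,k,e}\psi_{j,k,e}(x)\bigg|^3dx\leq C_\psi\,2^{j3d(1/2-1/3)}\sum_{(k,e)}|c_{j,k,e}|^3.
\end{equation*}
There is no real obstacle here; the only minor technical point is that the sharp constant $N_\psi$ for the finite-overlap property on $\Td$ (rather than $\Rd$) requires noting that the periodization of a compactly supported wavelet produces only finitely many translates per fundamental domain, which can be absorbed into $C_\psi$.
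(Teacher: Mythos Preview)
Your proof is correct and rests on the same structural fact as the paper's, namely the bounded-overlap property of compactly supported wavelets at a fixed scale. The execution differs: the paper phrases finite overlap in terms of supports (for each $(k,e)$ at most $c_\psi$ other wavelets have intersecting support), expands $\big(\sum|c_{j,k,e}\psi_{j,k,e}|\big)^3$ multinomially into diagonal, quadratic-linear, and triple cross terms, and then controls the cross terms via Young's inequality $|a|^2|b|\leq\tfrac23|a|^3+\tfrac13|b|^3$ together with the overlap bound. Your route is more direct: you apply the discrete power-mean inequality $|a_1+\cdots+a_m|^3\leq m^2\sum|a_i|^3$ \emph{pointwise}, using that at each $x$ only $m\leq N_\psi$ terms are nonzero, and then integrate. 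This avoids the multinomial bookkeeping entirely and yields the same conclusion with a comparable constant; the paper's argument, on the other hand, makes the dependence on the overlap count slightly more explicit ($1+3c_\psi+6c_\psi^2$ versus your $N_\psi^2$). Both proofs finish with the identical scaling computation $\|\psi_{j,k,e}\|_{L^3}^3=2^{jd/2}\|\psi_e\|_{L^3}^3$.
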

\begin{proof}
 Due to the compact support of the wavelets, there is a constant $c_{\psi}$ such that, for each $j\geq 0$ and $(k,e)\in\{0,\ldots,2^{j}-1\}^d\times E_j$, at most $c_{\psi}$ wavelets have support intersecting the support of $\psi_{j,k,e}$, i.e.,
 \begin{equation*}
\max_{(j,k,e)\in\mathbb{N}\times  P_j^d\times E_j}\#\mathcal{I}_{j,k,e}\leq c_{\psi}
 \end{equation*}
 where
 \begin{equation*}
\mathcal{I}_{j,k,e}:=\big\{(k',e')\in P_j^d\times E_j\, \big|\, \textup{ supp }\psi_{j,k,e}\cap\textup{ supp } \psi_{j,k',e'}\neq \emptyset\big\}.
 \end{equation*}
As a consequence, we have the following inequalities
\begin{align*}
\int_{\Td}\bigg|\sum_{(k,e)\in P_j^d\times E_j}c_{j,k,e}&\psi_{j,k,e}(x)\bigg|^3\, dx=\sum_{(k,e)\in P_j^d\times E_j}\int_{\Td}\big|c_{j,k,e}\psi_{j,k,e}(x)\big|^3\, dx
\\
&+3\sum_{(k,e)\neq (k',e')}\int_{\Td}\big|c_{j,k,e}\psi_{j,k,e}(x)\big|^2\big|c_{j,k',e'}\psi_{j,k',e'}(x)\big|\, dx
\\
& \hspace{-3cm}+6\sum_{(k,e)\neq (k',e')\neq (k'',e'')}\int_{\Td}\big|c_{j,k,e}\psi_{j,k,e}(x)\big|\big|c_{j,k',e'}\psi_{j,k',e'}(x)\big|\big|c_{j,k'',e''}\psi_{j,k'',e''}(x)\big|\, dx
\\
&\leq (1+3c_{\psi}+6c_{\psi}^2)\sum_{(k,e)\in P_j^d\times E_j}|c_{j,k,e}|^3\|\psi_{j,k,e}\|_{L^3}^3
\\
&= (1+3c_{\psi}+6c_{\psi}^2)\, \|\psi\|_{L^3}^3\, 2^{j3d(1/2-1/3)}\sum_{(k,e)\in P_j^d\times E_j}|c_{j,k,e}|^3
\end{align*}
where in the last equality we used that $\|\psi_{j,k,e}\|_{L^3}=2^{jd(1/2-1/3)}\, \|\psi\|_{L^3}$. The inequality is justified as follows. By Young's inequality and the support properties of $\psi_{j,k,e}$ we have
\begin{align*}
\sum_{(k,e)\neq (k',e')}\int_{\Td}&\big|c_{j,k,e}\psi_{j,k,e}(x)\big|^2\big|c_{j,k',e'}\psi_{j,k',e'}(x)\big|\, dx 
\\
&\leq 
\sum_{(k,e)\neq (k',e'),\  (j,k',e')\in\mathcal{I}_{j,k,e}}\int_{\Td}\frac{2}{3}\big|c_{j,k,e}\psi_{j,k,e}(x)\big|^3+\frac{1}{3}\big|c_{j,k',e'}\psi_{j,k',e'}(x)\big|^3\, dx
\\
&\leq \frac{2}{3}\, c_{\psi}\sum_{(k,e)}\int_{\Td}\big|c_{j,k,e}\psi_{j,k,e}(x)\big|^3\, dx+  \frac{1}{3}\, c_{\psi}\sum_{(k',e')}\int_{\Td}\big|c_{j,k',e'}\psi_{j,k',e'}(x)\big|^3\, dx
\\
&=c_{\psi}\sum_{(k,e)}\big|c_{j,k,e}\big|^3\|\psi_{j,k,e}\|_{L^3}^3.
\end{align*}
The same argument gives the desired bound for the product of three terms. This completes the proof.
\end{proof}

\begin{proof}[Proof of Proposition~\ref{new_d=1}]
Let $\{\psi_{j,k,e}\}$ be a basis of compactly supported wavelets. Writing $g$ formally as its wavelet series we have
\begin{equation}
\|g\|_{L^3}=\bigg\|\sum_{j\in\mathbb{N}}\sum_{k,e}c_{j,k,e}\psi_{j,k,e}\bigg\|_{L^3}\leq \bigg\|\sum_{j\leq J}\sum_{k,e}c_{j,k,e}\psi_{j,k,e}\bigg\|_{L^3}+\bigg\|\sum_{j>J}\sum_{k,e}c_{j,k,e}\psi_{j,k,e}\bigg\|_{L^3}\label{aux:d1_first}
\end{equation}
for any $J\in\mathbb{N}$. Using Lemma~\ref{lemma:wavelets}, the first term can be bounded as
\begin{align*}
\bigg\|\sum_{j\leq J}\sum_{k,e}c_{j,k,e}\psi_{j,k,e}\bigg\|_{L^3}&\leq \sum_{j\leq J}\bigg(C_{\psi}2^{j3d(1/2-1/3)}\sum_{(k,e)}|c_{j,k,e}|^3\bigg)^{1/3}
\\
&\leq C_{\psi}^{1/3}\, J\, \bigg(\max_{j\leq J}\, 2^{j3d(1/2-1/3)}\sum_{(k,e)}|c_{j,k,e}|^3\bigg)^{1/3}
\\
&\leq  C_{\psi}^{1/3}\, J\,\|g\|_{B^0_{3,3}},
\end{align*}
which gives the first term of the claim. For the second term, we use that $g\in L^{\infty}$ and $g\in BV$, which means that the wavelet coefficients of $g$ satisfy the bounds
\begin{align*}
\max_{(k,e)\in P_j^d\times E_j}|c_{j,k,e}|&\leq 2^{-jd/2}\, \|g\|_{L^{\infty}}
\\
 \sum_{(k,e)\in P_j^d\times E_j}|c_{j,k,e}|&\leq 2^{j(d/2-1)}\, \|g\|_{BV},
\end{align*}
for any $j\in\mathbb{N}$, where the first inequality follows from the compact support of the wavelets and H\"older's inequality, and the second follows from the embedding $BV\subset B^1_{1,\infty}$. 
Using Lemma~\ref{lemma:wavelets} and these bounds, the second term in~\eqref{aux:d1_first} can be bounded as
\begin{align*}
\bigg\|\sum_{j>J}\sum_{k,e}c_{j,k,e}\psi_{j,k,e}\bigg\|_{L^3}&\leq\sum_{j>J} \bigg(C_{\psi}2^{j3d(1/2-1/3)}\sum_{(k,e)}|c_{j,k,e}|^3\bigg)^{1/3}
\\
&\leq C_{\psi}^{1/3}\sum_{j>J} \bigg(2^{j3d(1/2-1/3)}\, 2^{-jd}\, \|g\|_{L^{\infty}}^22^{j(d/2-1)}\|g\|_{BV}\bigg)^{1/3}
\\
&\leq C_{\psi}^{1/3}\, \|g\|_{L^{\infty}}^{2/3}\, \|g\|_{BV}^{1/3}\sum_{j>J} 2^{-j/3},
\end{align*}
which gives the claim.
\end{proof}

\subsection{Verification of assumptions for particular dictionaries}\label{SM:Proof_Diction}

\subsubsection{Proof of Proposition~\ref{Res:Veri_Wavelet}}\label{SM:VerificationWavelet}

\begin{proof}[Proof of Proposition~\ref{Res:Veri_Wavelet}]
We begin with the inequality in Assumption~\ref{Res:Gen_Ass}. Recall that the Besov norm of a function can be equivalently represented 
in terms 
of its wavelet coefficients with respect to a smooth enough wavelet basis (see Theorem 4.3.26 in~\cite{gine2015mathematical} for 
the one-dimensional case, and Section 1.3.3 in~\cite{Triebel} for the general case). In particular we have
\begin{align*}
 \|g\|_{\Besov}&\asymp \sup_{j\geq 0}\max_{k\in P_j^d}\max_{e\in E_j}|\langle \psi_{j,k,e},g\rangle|
 \\
 &\leq \max_{0\leq j<J}\max_{k\in P_j^d}\max_{e\in E_j}|\langle \psi_{j,k,e},g\rangle|+\sup_{j\geq J}\max_{k\in P_j^d}\max_{e\in E_j}|\langle \psi_{j,k,e},g\rangle|.
\end{align*}
Note that the first term is precisely $\max_{(j,k,e)\in\Omega_n}|\langle \psi_{j,k,e},g\rangle|$ for 
$J=\big\lfloor\frac{1}{d}\frac{\log n}{\log 2}\big\rfloor$ and $\Omega_n$ as in equation~\eqref{Res:Wave_Sets}. It remains to show that the second term 
is dominated by $C \|g\|_{L^{\infty}(\Td)}\, n^{-1/2}$. For that, H\"older's inequality yields
\begin{align}
 \sup_{j\geq J}\max_{k\in P_j^d}\max_{e\in E_j}|\langle \psi_{j,k,e},g\rangle|&\leq  \sup_{j\geq J}\max_{k\in P_j^d}\max_{e\in E_j}\, \|\psi_{j,k,e}\|_{L^1(\Td)}\|g\|_{L^{\infty}(\Td)}\nonumber
 \\
 &\leq C\, 2^{-Jd/2}\|g\|_{L^{\infty}(\Td)},\label{SM:WaveJack}
\end{align}
where we used that the wavelets are of the form $\psi_{j,k,e}(x)=2^{jd/2}\psi_e\big(2^jx-k\big)$. Using now that $2^{-Jd/2}\leq 2^{d/2}\, n^{-1/2}$, the inequality follows.
Morevoer, since the index sets $\Omega_n$ satisfy $2^{-d}n\leq \#\Omega_n\leq n$, we can choose $Q(x)=x$ and $\Gamma=1$ in Assumption~\ref{Res:Gen_Ass}. This completes the proof.
\end{proof}

\subsubsection{Proof of Proposition~\ref{Res:veri_multi}}\label{App:Mu}

It remains to prove Proposition~\ref{Res:veri_multi} for the multiresolution system. For that, we rely on the characterization of Besov spaces in terms of local means. In particular, we use the norm equivalence
\begin{equation}
\|g\|_{\Besov}\asymp \sup_{j\geq 0} 2^{jd/2}\sup_{x\in[0,1)^d}\bigg|\int_{[0,1)^d}K\big(2^j(y-x)\big)g(y)\, dy\bigg|, \label{App:BesovEquiEq}
\end{equation}
where $K\in C^{\infty}(\Rd)$ such that supp $K\subseteq [0,1)^d$ and whose Fourier transform satisfies 
$|\mathcal{F}[K](\xi)|>0$ for $|\xi|\leq 2$. The norm equivalence~\eqref{App:BesovEquiEq} is well-known in analysis. It follows by an adaptation of the proof of Theorem 1 in~\cite{triebel1988characterizations}.

\begin{proof}[Proof of Proposition~\ref{Res:veri_multi}]
We have to show that the multiscale system $\Phi=\big\{\phi_B\, \big|\, B\in\Omega\big\}$ satisfying Assumption~\ref{ass:multiSystem} also satisfies Assumption~\ref{Res:Gen_Ass} with $\Gamma=\max\{1,d/2\}$. 
For that, note that by part b) of Remark~\ref{remAss1}, we have $n^{\max\{1,d/2\}}\leq \#\Omega_n\leq n^{\max\{1,d/2\}+1}$ for all 
$n\in\mathbb{N}$, so we have $\Gamma=\max\{1,d/2\}$. 
\\

For the inequality in Assumption~\ref{Res:Gen_Ass}, we have to show that there is a constant $C>0$ such that for any $n\in\mathbb{N}$ we have
 \begin{equation}
  \|g\|_{\Besov}\leq \frac{C}{\sqrt{n}}\|g\|_{L^{\infty}(\mathbb{T}^d)}+C\max_{B\in\Omega_n}\bigg|\int_{[0,1)^d}\phi_{B}(z)g(z)\, dz\bigg|
 \end{equation}
 for any $g\in L^{\infty}(\Td)$.
 \\
 For simplicity of the notation, we will denote the cubes in $\Omega_n$ by $\overline{k}+[0,m^{-j})^d$, and the corresponding functions by $\phi_{j,\overline{k}}=m^{jd/2}K(m^j(\cdot-\overline{k}))$, with $j=0,\ldots,J-1$ and 
 $\overline{k}\in\mathcal{D}_{R}$ (see Assumption~\ref{ass:multiSystem} for the definition of this set). 
 With this notation, the claim can be rewritten as
 \begin{equation}
  \|g\|_{\Besov}\leq \frac{C}{m^{Jd/2}}\|g\|_{L^{\infty}(\mathbb{T}^d)}+\max_{0\leq j <J}\max_{\overline{k}\in\mathcal{D}_{R}}\bigg|\int_{[0,1)^d}\phi_{j,\overline{k}}(z)g(z)\, dz\bigg|,\label{App:Claim_Prop_MR}
 \end{equation}
 since $J=\big\lfloor\frac{1}{d}\frac{\log n}{\log m}\big\rfloor$. Finally, without loss of generality we can prove the claim for $m=2$, since the case of general $m>2$ follows analogously.
 \\
 By the characterization of Besov spaces in~\eqref{App:BesovEquiEq}, we have
  \begin{align*}
   \|g\|_{\Besov}&\asymp \sup_{j\in \mathbb{N}_0}\sup_{x\in[0,1)^d}\bigg|\int_{[0,1)^d}\phi_{j,x}(z)g(z)\, dz\bigg|
   \\
   &\leq \sup_{0\leq j < J}\sup_{x\in[0,1)^d}\bigg|\int_{[0,1)^d}\phi_{j,x}(z)g(z)\, dz\bigg|
   \\
   &\hspace{0.5cm} + \sup_{j\geq J}\sup_{x\in[0,1)^d}\bigg|\int_{[0,1)^d}\phi_{j,x}(z)g(z)\, dz\bigg|
  \end{align*}
  for any $J\in\mathbb{N}$. 
 The first term is controlled in Step 1 by
 \begin{equation}
  \sup_{0\leq j < J}\sup_{x\in[0,1)^d}\bigg|\int_{[0,1)^d}\phi_{j,x}(z)g(z)\, dz\bigg|\leq C2^{-Jd/2}\,\|g\|_{L^{\infty}(\mathbb{T}^d)}+ \max_{0\leq j <J}\max_{\overline{k}\in\mathcal{D}_{R}}\bigg|\int_{[0,1)^d}\phi_{j,\overline{k}}(z)g(z)\, dz\bigg|,\label{Mu_app1}
 \end{equation}
 where $\mathcal{D}_{R}$ is the index set of positions. 
 The second term is controlled in Step 2, which gives
 \begin{equation}
  \sup_{j\geq J}\sup_{x\in[0,1)^d}\bigg|\int_{[0,1)^d}\phi_{j,x}(z)g(z)\, dz\bigg|\leq 2^{-Jd/2}\|g\|_{L^{\infty}(\mathbb{T}^d)}\|K\|_{L^1(\Rd)}.\label{mu_app2}
 \end{equation}
 These bounds imply the claim.
 \\
 \textbf{Step 1.} By the definition of the set $\mathcal{D}_{R}$, for any $x\in[0,1)^d$ there is a 
 $\overline{k}\in\mathcal{D}_{R}$ 
 such that $|x-\overline{k}|_{\infty}\leq 2^{-R}$, where $|\cdot|_{\infty}$ denotes the supremum norm in $\mathbb{R}^d$. Hence, 
 for any $j=0,\ldots,J-1$ we have
 \begin{align*}
  \bigg|\int_{[0,1)^d}\phi_{j,x}(z)g(z)\, dz\bigg|&= 2^{jd/2}\bigg|\int_{[0,1)^d}K\big(2^jz\big)g(x+z)\, dz\bigg|
  \\
  &\leq 2^{jd/2}\bigg|\int_{[0,1)^d}K\big(2^jz\big)\big(g(x+z)-g(\overline{k}+z)\big)\, dz\bigg|
  \\
  &\hspace{0.5cm} +2^{jd/2}\bigg|\int_{[0,1)^d}K\big(2^jz\big)g(\overline{k}+z)\, dz\bigg|.
  \end{align*}
 The first term can be bounded as
 \begin{align*}
  2^{jd/2}\bigg|\int_{[0,1)^d}K\big(2^jz\big)&\big(g(x+z)-g(\overline{k}+z)\big)\, dz\bigg|
  \\
  &= 2^{jd/2}\bigg|\int_{[0,1)^d}g(z)\big(K\big(2^j(z-x)\big)-K\big(2^j(z-\overline{k})\big)\big)\, dz\bigg|
  \\\
  &\leq 2^{jd/2}\|g\|_{L^{\infty}(\mathbb{T}^d)}\int_{[0,1)^d}\big|K\big(2^jz\big)-K\big(2^j(z+x-\overline{k})\big)\big|\, dz
  \\
  &=2^{-jd/2}\|g\|_{L^{\infty}(\mathbb{T}^d)}\underbrace{\int_{[0,1)^d}\big|K\big(z\big)-K\big(z+2^j(x-\overline{k})\big)\big|\, dz}_{\leq \big|2^j(x-\overline{k})\big|\|\nabla K\|_{L^1(\mathbb{R}^d)}}
  \\
  &\leq 2^{-jd/2}\|g\|_{L^{\infty}(\mathbb{T}^d)}\underbrace{\big|2^j(x-\overline{k})\big|}_{\leq \sqrt{d}\, 2^{j-R}}\|\nabla K\|_{L^1(\mathbb{R}^d)},
 \end{align*}
 where in the last inequality we used the mean value theorem and the fact that $K$ is smooth. Recall that we have chosen $\overline{k}$ 
 such that $|x-\overline{k}|_{\infty}\leq 2^{-R}$, so that $\big|2^j(x-\overline{k})\big|\leq \sqrt{d}\, \big|2^j(x-\overline{k})\big|_{\infty}\leq \sqrt{d}\, 2^{j-R}$. 
 Since the bound above is uniform in $x\in[0,1)^d$ and $j=0,\ldots,J-1$, we conclude that
 \begin{align*}
  \max_{0\leq j <J}\max_{\overline{k}\in\mathcal{D}_R} 2^{jd/2}&\bigg|\int_{[0,1)^d}K\big(2^jz\big)\big(g(x+z)-g(\overline{k}+z)\big)\, dz\bigg| 
  \\
  &\leq \sqrt{d}\, \max_{0\leq j<J}2^{j(1-d/2)-R}\|g\|_{L^{\infty}(\mathbb{T}^d)}\|\nabla K\|_{L^1(\mathbb{R}^d)}.
 \end{align*}
 The choices $R=J$ if $d=1$ and $R=Jd/2$ if $d\geq 2$ give $\max_{0\leq j<J}2^{j(1-d/2)-R}=2^{-Jd/2}$. Hence, we have
 \begin{align*}
  \max_{0\leq j <J}\sup_{x\in[0,1)^d}\bigg|\int_{[0,1)^d}\phi_{j,x}(z)g(z)\, dz\bigg|&\leq \max_{0\leq j <J}\max_{\overline{k}\in\mathcal{D}_{R}} 2^{jd/2}\bigg|\int_{[0,1)^d}K\big(2^jz\big)\big(g(x+z)-g(\overline{k}+z)\big)\, dz\bigg| 
  \\
  &\hspace{0.5cm}+ \max_{0\leq j <J}\max_{\overline{k}\in\mathcal{D}_{R}}\bigg|\int_{[0,1)^d}K_{j,\overline{k}}(z)g(z)\, dz\bigg|
  \\
  &\leq \sqrt{d}\, 2^{-Jd/2}\,\|g\|_{L^{\infty}(\mathbb{T}^d)}\|\nabla K\|_{L^1(\mathbb{R}^d)}
  \\
  &\hspace{0.5cm}+ \max_{0\leq j <J}\max_{\overline{k}\in\mathcal{D}_{R}}\bigg|\int_{[0,1)^d}K_{j,\overline{k}}(z)g(z)\, dz\bigg|,
 \end{align*}
 which yields~\eqref{Mu_app1}. 
 Summarizing, we have approximated the supremum over $x\in[0,1)^d$ by the supremum over dyadic positions $\overline{k}$ at scale 
 $2^{-R}$.
 
 \textbf{Step 2.} Equation~\eqref{mu_app2} follows by H\"older's inequality, i.e.
 \begin{align*}
  \bigg|\int_{[0,1)^d}\phi_{j,x}(z)g(z)\, dz\bigg|&\leq \|g\|_{L^{\infty}(\mathbb{T}^d)}\int_{[0,1)^d}2^{jd/2}\big|K\big(2^{j}(z-x)\big)\big|\, dz
  \\
  &=2^{-jd/2}\|g\|_{L^{\infty}(\mathbb{T}^d)}\|K\|_{L^1(\Rd)}.
 \end{align*}
 The result follows by taking the supremum over $x\in[0,1)^d$ and over $j\geq J$.
\end{proof}

\subsubsection{Proof of Proposition~\ref{Res:Veri_Curv}}\label{VerificationCurve}

\begin{proof}[Proof of Proposition~\ref{Res:Veri_Curv}]
The inequality in Assumption~\ref{Res:Gen_Ass} follows in both cases (curvelet and shearlet) from the inequality~\eqref{SM:WaveJack} for the wavelet basis (see the proof of Proposition~\ref{Res:Veri_Wavelet} above). Indeed, denoting the elements of $\Phi$ by
\begin{equation*}
 \phi_{\omega}=\begin{cases}
                \psi_{j,k,e} \ &\textup{ if } \omega=(j,k,e)\in\Theta^W,
                \\
                \varphi_{j,\tilde{\theta}} \ &\textup{ if } \omega=(j,\tilde{\theta})\in \Theta,
               \end{cases}
\end{equation*}
we have
\begin{align*}
\|g\|_{\Besov}&\leq C\max_{(j,k,e)\in\Theta^W_n}|\langle g,\psi_{j,k,e}\rangle|+C\frac{\|g\|_{L^{\infty}(\Td)}}{\sqrt{n}}
\\
&\leq C\max_{\omega\in\Theta^W_n\cup\Theta_n}|\langle g,\phi_{\omega}\rangle|+C\frac{\|g\|_{L^{\infty}(\Td)}}{\sqrt{n}},
\end{align*}
where we just enlarge the right-hand side by taking the maximum over a larger index set. 
Concerning the cardinality of $\Omega_n\cup\Theta_n$, by Assumption~\ref{Res:Ass_Curv} we have
\begin{equation*}
\#(\Omega_n\cup\Theta_n)= 2^{d\lfloor\frac{1}{d}\frac{\log n}{\log 2}\rfloor}+2^{d\lfloor\frac{1}{d}\frac{\log n}{\log 2}\rfloor},
\end{equation*}
and hence we have Assumption~\ref{Res:Gen_Ass} with $Q(x)=2x$ and $\Gamma=1$.
\end{proof}

\end{document}